\newtheorem{thm}{Theorem}[section]
\newtheorem{rem}{Remark}[section]
\newtheorem{prop}{Proposition}[section]
\newtheorem{cor}{Corollary}[section]
\def\theequation{\@arabic{\c@section}.\@arabic{\c@equation}}
\begin{document}
\begin{center}
{\Large \bf { Shape Preserving Rational Cubic Spline Fractal
Interpolation }}
\end{center}
\begin{center}
{\sc{A. K. B. Chand and P. Viswanathan }}\\
Department of Mathematics\\
Indian Institute of Technology Madras\\
Chennai - 600036, India \\
Email:  chand@iitm.ac.in ;  amritaviswa@gmail.com\\
Phone: 91-44-22574629\\
Fax : 91-44-22574602\\
\end{center}
\begin{abstract}
 Fractal Interpolation Functions (FIFs) developed through
Iterated Function Systems (IFSs) offer more versatility than the
classical interpolants. However, the application of FIFs in the
domain of shape preserving interpolation is not fully addressed so
far. Among various interpolation techniques that are available in
the classical numerical analysis, the rational interpolation
schemes are well suited for the shape preservation and shape
modification problems. Consequently, we introduce a new class of
rational cubic spline FIFs that involve tension parameters as a
common platform for the shape preserving interpolation and the
fractal interpolation to work together. Suitable conditions on the
parameters are developed so that the rational fractal interpolant
retain the monotonicity and convexity properties inherent in the
given data. With some suitable hypotheses on the original data
generating function, the convergence analysis of the rational
cubic spline FIF is carried out. Due to the presence of  scaling
factors in the rational cubic spline fractal interpolant, our
approach generalizes the classical results on the shape preserving
rational interpolation by Delbourgo and Gregory [SIAM J. Sci.
Stat. Comput., 6 (1985), pp. 967-976]. Furthermore, for preserving
shape of a data set wherein the variables representing the
derivatives have varying irregularity, the present schemes
outperform their classical counterparts. Several examples are
supplied to support the practical value of the method.\\
{\bf This paper was submitted to SIAM Journal of Numerical Analysis on April 14, 2013 after initial submission to Constructive Approximation on July 18, 2012.}
\end{abstract}
{\bf KEYWORDS :} Iterated Function System, Fractal Interpolation
Function, Rational Cubic Spline FIF, Convergence, Shape
Preservation, Monotonicity, Convexity.
\\\\
{\bf AMS Subject Classifications:} 28A80, 26C15, 26A48, 26A51,
65D05, 65D17.
\section{Introduction}\label{r1ps1}
In the classical Numerical Analysis, there are several
interpolation methods that can be applied to a specific data set,
according to the assumptions that underlie the model we
investigate. However, if the given data set is more complex and
irregular (for instance, data sampled from real-world signals such
as financial series, seismic data, and bioelectric recordings),
then the traditional interpolants may not provide satisfactory
results. To address this issue, Barnsley \cite{B} introduced a
class of functions called FIFs using the notion of IFS. FIFs aim
mainly at data which present details at different scales or some
degree of self-similarity. These characteristics imply an
irregular structure to the interpolant. The main differences of
FIFs from the traditional interpolants reside in: a) the
definition in terms of functional equation which implies a self
similarity in small scales; b) the iterative construction instead
of using an analytical formula and c) the usage of some
parameters, which are usually called scaling factors that are
strongly related to the fractal dimension of the interpolant.
Later, Barnsley and Harrington\cite{BH} observed that if the
problem is of differential type, then the parameters of the IFS
may be chosen suitably so that the corresponding FIF is smooth.
This observation initiated a striking relationship between the
classical splines and the fractal functions. Smooth FIFs (fractal
splines) constitute an advance in the techniques of approximation,
since the classical methods of real-data interpolation can be
generalized by means of smooth fractal techniques (see, for
instance, \cite{CK, N, NS, CN1}). Further, if the experimental
data are approximated by a $\mathcal{C}^r$-FIF $S$, then the
fractal dimension of $S^{(r)}$ provides a quantitative parameter
for the analysis of the data, allowing to compare and discriminate
experimental processes. Though FIFs are primarily applied for a
self-affine/self-similar data set, their extensions namely hidden
variable FIFs \cite{BM} and coalescence hidden variable FIFs
\cite{CK2} can be used to simulate curves that are non-self-affine
or partly self-affine and partly non-self-affine in nature. Due to
these versatility and flexibility, the theory of FIFs has evolved
beyond its mathematical framework and has become a powerful and
useful tool in the applied sciences as well as engineering.

\par
The central focus of interpolation (traditional or fractal) is to
construct a continuous function  that fits the given points
obtained by sampling or experimentation. However, to obtain a
valid physical interpretation of the underlying process, it is
important to develop interpolation schemes that inherit certain
properties from the prescribed data set. Examples of few such
prevalent features are positivity, monotonicity, and convexity.
Constraining the range of an interpolation function so as to yield
a credible visualization of the data by adhering to these
intrinsic characteristics is generally referred to as  \emph{shape
preserving interpolation}. There are multitudes of  classical
interpolation methods that honour shape properties inherent in the
data.  In what follows, we shall provide some pioneering works in
this field.
\par
Research on shape preserving interpolation has been originated
with some existence-type results by Wolibner\cite{W} and
Kammerer\cite{K}. These results do not provide any additional
information on the shape preserving polynomial. A constructive
approach to the shape preserving interpolation using hyperbolic
tension splines was popularized by Schweikert \cite{SCH}. Main
issues connected with the hyperbolic tension splines are: (i)
development of an automatic algorithm for the choice of free
parameters is complicated; (ii) it is computationally complex to
work with, especially for very large/small values of tension
parameters involved in it. Polynomial splines gain shape
properties either (i) by addition of extra knots (see \cite{SC,
FT, SH,MS}), which may not be effective in terms of computational
economy, or (ii) by perturbing derivative parameters (see, for
instance, \cite{FC, FB}), which make the method unsuitable for
Hermite data, where the given derivative values are also to be
interpolated. In shape preservation and shape control, rational
splines provide an acceptable alternative to the
polynomial/hyperbolic splines. Wide applicability of the rational
interpolants may be attributed to their ability to receive free
parameters (which may be used for shape control) in their
structure, ability to accommodate a wider range of shapes than the
polynomial family, excellent asymptotic properties, capability to
model complicated structures, better interpolation properties, and
excellent extrapolating powers. Gregory and Delbourgo popularized
the shape preserving rational interpolation methods through a
series of papers \cite{DG1, DG2,DG4, D2,G}. These works stimulated
a large amount of research in the direction of shape preserving
rational spline interpolation. For brevity, the reader is referred
to \cite {JCC, XH, S4,GS}.
\par
These non-recursive shape preserving interpolation techniques,
 in general, produce
smooth interpolants whose derivatives are also smooth except
possibly at some finite number of points. However, in practice,
there are many situations where the variable in the data possesses
certain shape properties, and at the same time, variables
representing the derivatives may be irregular. For instance, a
sphere falling in a wormlike micellar solution does not approach a
steady terminal velocity, instead, it undergoes continual
oscillations as it falls \cite{JA}. Hence, to simulate the
displacement and velocity profiles of such motions,
monotonicity/positive interpolants with varying irregularity
(fractality) in the derivatives may be advantageous. Similarly, in
nonlinear control systems \cite{WS} (say, for instance, the motion
of a pendulum on a cart) monotonicity/convexity preserving
interpolants with varying irregularity in the second derivative
may be desirable for the study of acceleration. Therefore, it is
useful to develop smooth FIFs (which are known to have fractality
in their derivatives) that retain the intrinsic shape properties
of the data set. From the knowledge gained from the classical
shape preserving polynomial interpolation techniques, it is felt
that preserving fundamental shape properties via polynomial FIFs
would be difficult or impossible. Thus, for an initial exposition
of FIFs to the field of shape preserving interpolation, rational
FIFs seem to be an appropriate medium.
\par
With these motivations, the capability of FIFs to generalize
smooth classical interpolants, and the effectiveness of rational
function models in shape preservation are intertwined to provide a
new solution to the shape preserving interpolation problem from a
fractal perspective. We construct a $\mathcal{C}^1$-rational cubic
spline FIF with one family of shape parameters in section
\ref{r1ps31}. To demonstrate the effectiveness of the rational
cubic spline fractal interpolation scheme, the convergence results
are discussed in section \ref{r1ps32}. The rationale behind
selecting a rational FIF with shape parameters instead of widely
studied polynomial FIFs is the following. If the scaling factors
tend to zero and the shape parameters tend to infinity, then the
rational FIF converges to the piecewise linear interpolant for the
data. This tension effect ensures that the FIF can be used for
shape preserving interpolation. Section \ref{r1ps4} provides an
automatic selection of the parameters that culminate in
interactive algorithms to preserve monotonicity/convexity of the
data. These algorithms take full advantage of the flexibility that
the fractal splines permit. By suitable choice of the shape
parameters that verify the monotonicity condition, our cubic
spline FIF reduces to a lower degree rational spline FIF that
generalizes the classical rational spline interpolant studied in
\cite{DG1}. With  special choices of the shape parameters
satisfying convexity condition, our cubic/quadratic rational FIF
reduces to lower order form, which provides the fractal
generalization of the classical rational interpolant discussed in
\cite{D2}. Again, by proper choices of the scaling factors and the
shape parameters, our rational cubic FIF degenerates to the
classical rational cubic interpolating function introduced in
\cite{DG4}. Therefore, the present paper offers a novel idea of
setting a common platform for the fractal interpolation and the
shape preserving interpolation to operate together, and in the
process collectively generalizes  three different classical
rational interpolation schemes available in the literature. In
section \ref{r1ps5}, some remarks and possible extensions are
made. The effectiveness of our shape preserving fractal
interpolation schemes is illustrated with suitably chosen
numerical examples and graphs in section \ref{r1ps6}.
\section{Basics of Polynomial Spline FIF}\label{r1ps2}
\setcounter{equation}{0}  To equip ourselves with the requisite
general material for the construction of the desired rational
spline FIF, we shall reintroduce the polynomial spline FIF. A
complete and rigorous treatment can be found in \cite{B,B1,BH}.
\subsection{Fractal Interpolation Functions}\label{r1ps21}
Let $\{(x_i,y_i) \in I \times \mathbb{R} : i= 1,2,\dots,N \}$ be a
real data set, where $ x_1 < x_2 < \dots <x_N$ is a partition of
$I = [x_1, x_N]$. Set, $K= I \times D$, where $D$ is a large
enough compactum in $\mathbb{R}.$ Let $ J = \{1,2,\dots,N-1 \}$,
and $L_i : I \longrightarrow I_i = [x_i, x_{i+1}]$ be  affine maps
satisfying:
\begin{equation}\label{r1}
L_i(x_1) = x_i, \quad L_i(x_N) = x_{i+1}, \; i \in J,
\end{equation}
and  $ F_i : K \longrightarrow D $ be  continuous functions such
that:
\begin{equation}\label{r2}\left.
\begin{split}
F_i(x_1,y_1)   = y_i,\;
F_i(x_N,y_N)   =  y_{i+1} \\
|F_i(x,y)-F_i(x,y^*)| \le  |\alpha_i| |y-y^*|
\end{split}\right\}, \; i \in J,
\end{equation}
where  $(x,y),  (x,y^*) \in K $, $0\le\ |\alpha_i| \le \kappa <1$
for all $i \in J $, and $\kappa$ is a fixed real constant. Define
$ w_i(x,y) = \big(L_i(x), F_i(x,y)\big)$ for $ i \in J.$ It is
known \cite{B} that there exists a metric on $\mathbb{R}^2$,
equivalent to the Euclidean metric, with respect to which $w_i$, $
i \in J$, are contractions. The collection $\{K; w_i, i \in J\}$
is termed as an Iterated Function System (IFS). On
$\mathcal{H}(K)$, the set of all nonempty compact subsets of $K$,
endowed with the Hausdorff metric, define a set valued map
$W(A)=\underset{i\in J}\bigcup w_i (A)$. Then, $W$ is a
contraction map on the complete metric space $\mathcal{H}(K)$.
Thanks to Banach Fixed Point Theorem, there exists a unique set
$G\in \mathcal{H}(K)$ such that $W(G)=G$. The set $G$ is termed as
the attractor or deterministic fractal corresponding to the IFS
$\{K; w_i, i\in J\}$. The definition of a FIF originates from the
following proposition:
\begin{prop}(Barnsley \cite{B}) \label{P1}
 The IFS $\{ K ; w_i , i \in J \}$ has a unique attractor $G$ such that $G$ is the graph
of a continuous function $ f: I \rightarrow \mathbb{R}$ which
interpolates the data $\{(x_i,y_i): i= 1,2,\dots,N \}$, i.e.,
$G=\{(x, f(x)): x \in I\}$ and $f(x_i)= y_i~ \text{for}~
i=1,2,\dots,N$.
\end{prop}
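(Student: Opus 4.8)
The plan is to exhibit the attractor $G$ as the graph of the fixed point of a Read--Bajraktarevi\'c operator on a complete function space, rather than working directly with $W$ on $\mathcal{H}(K)$. Concretely, let
\[
\mathcal{G}=\big\{g\in \mathcal{C}(I): g(x_1)=y_1,\ g(x_N)=y_N,\ g(I)\subseteq D\big\},
\]
which is a closed subset of $\big(\mathcal{C}(I),\,d_\infty\big)$, where $d_\infty(g,h)=\max_{x\in I}|g(x)-h(x)|$, and hence a complete metric space. On $\mathcal{G}$ define $T$ by
\[
(Tg)(x)=F_i\big(L_i^{-1}(x),\,g(L_i^{-1}(x))\big),\qquad x\in I_i=[x_i,x_{i+1}],\ i\in J,
\]
which makes sense because each $L_i$ is, by (\ref{r1}), an affine bijection of $I$ onto $I_i$. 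The proposition will follow once $T$ is shown to be a contraction on $\mathcal{G}$: its unique fixed point $f$ satisfies $f(L_i(x))=F_i(x,f(x))$ for all $x\in I$, $i\in J$, and one then checks that the graph of $f$ is the attractor and that $f$ interpolates the data.

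The steps I would carry out, in order, are the following. \textbf{(i)} Verify $T$ is well defined and maps $\mathcal{G}$ into itself: continuity of $Tg$ on each $I_i$ is immediate from the continuity of $F_i$ and $L_i^{-1}$, and $Tg(I)\subseteq D$ because each $F_i$ maps into $D$; the substantive point is consistency at an interior node $x_i$ ($2\le i\le N-1$), which is the right endpoint of $I_{i-1}$ and the left endpoint of $I_i$. From (\ref{r1}), $L_{i-1}^{-1}(x_i)=x_N$ and $L_i^{-1}(x_i)=x_1$, so the two one-sided values are $F_{i-1}(x_N,g(x_N))=F_{i-1}(x_N,y_N)=y_i$ and $F_i(x_1,g(x_1))=F_i(x_1,y_1)=y_i$ by (\ref{r2}); they agree, so $Tg\in\mathcal{C}(I)$, and the same computation at $x_1$ and $x_N$ gives $(Tg)(x_1)=y_1$, $(Tg)(x_N)=y_N$. \textbf{(ii)} Show $T$ is a contraction: for $g,h\in\mathcal{G}$ and $x\in I_i$, the Lipschitz bound in (\ref{r2}) gives $|(Tg)(x)-(Th)(x)|\le|\alpha_i|\,|g(L_i^{-1}(x))-h(L_i^{-1}(x))|\le\kappa\,d_\infty(g,h)$, whence $d_\infty(Tg,Th)\le\kappa\,d_\infty(g,h)$ with $\kappa<1$. \textbf{(iii)} Apply the Banach Fixed Point Theorem to obtain a unique $f\in\mathcal{G}$ with $Tf=f$; rewriting the fixed-point equation with $x$ replaced by $L_i(x)$ yields $f(L_i(x))=F_i(x,f(x))$ for every $x\in I$, $i\in J$.

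It remains to connect $f$ to the IFS. Set $G=\{(x,f(x)):x\in I\}$, a compact subset of $K$. Then $w_i(G)=\{(L_i(x),F_i(x,f(x))):x\in I\}=\{(L_i(x),f(L_i(x))):x\in I\}$, which is exactly the graph of $f$ restricted to $I_i$; since $\bigcup_{i\in J}I_i=I$, we get $W(G)=\bigcup_{i\in J}w_i(G)=G$, and by the uniqueness of the attractor noted just before the proposition, $G$ is the attractor of $\{K;w_i,i\in J\}$. Finally, for $i\in J$, $f(x_i)=f(L_i(x_1))=F_i(x_1,f(x_1))=F_i(x_1,y_1)=y_i$, and $f(x_N)=y_N$ by membership in $\mathcal{G}$, so $f$ interpolates $\{(x_i,y_i):i=1,\dots,N\}$. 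I expect the only delicate point to be step (i): the operator $T$ is defined piecewise, and proving that the pieces match at the interior nodes — equivalently, that $T$ actually lands in $\mathcal{C}(I)\cap\mathcal{G}$ — is precisely where the endpoint conditions (\ref{r1}) and the join conditions in (\ref{r2}) are used; once well-definedness is secured, the contraction estimate and the identification of the graph with the attractor are routine.
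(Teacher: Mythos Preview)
Your proof is correct and complete. Note, however, that the paper does not supply its own proof of this proposition: it is stated as a result of Barnsley with a citation to \cite{B}, and the functional equation (\ref{r5}) is quoted immediately afterward as a consequence. Your argument via the Read--Bajraktarevi\'c operator on the closed subspace $\mathcal{G}\subset\mathcal{C}(I)$ is precisely the standard route (and is the one Barnsley uses); indeed, the paper invokes exactly this machinery later in the proof of Theorem~\ref{theorem1}, where the space $\mathcal{F}=\{g\in\mathcal{C}(I):g(x_1)=y_1,\ g(x_N)=y_N\}$ and the contraction $T$ appear in the same form you wrote down. So your proposal is both correct and faithful to the approach the paper implicitly relies on.
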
\label{prop1}
\noindent The function $f$ in Proposition \ref{P1} is called a
\emph{fractal interpolation function} corresponding to the IFS $\{
K ; w_i , i \in J \}$. The adjective \emph{fractal} is used to
emphasize that $G=\text{graph}(f)$ may have noninteger
Hausdorff-Besicovitch dimension. But $f$ may be many times
differentiable (see section \ref{r1ps22}). Since $G$ is a union of
transformed copies of itself, i.e., $G=\underset{i \in J}\bigcup
w_i(G)$, an alternative name for a fractal function could be a
\emph{self-referential function}. The characterization of a graph
of a FIF by an IFS leads to a recursive construction of $f$ using
the following functional equation \cite{B}:
\begin{equation}\label{r5}
f(x)=F_i\big(L_i^{-1}(x), f \circ L_i^{-1}(x)\big), \ \ x \in I_i,
\ \ i \in J.
\end{equation}
\noindent The following special class of  IFS has received wide
attention in the literature:
\begin{equation}\label{r6} \left.
 \begin{split}L_i(x)&=a_i x+b_i\\
F_i(x,y)&=\alpha_i y +R_i(x) \end{split}\right \}, i \in J,
\end{equation}
\noindent where $\alpha_i$, $i \in J,$ are parameters satisfying $
|\alpha_i| \le \kappa <1$ and $R_i : I \to \mathbb{R}, \; i \in
J$, are suitable polynomials satisfying (\ref{r2}). The multiplier
$\alpha_i$ is called a scaling factor of the transformation $w_i$
and $\alpha=(\alpha_1,\alpha_2,\dots, \alpha_{N-1})$ is the scale
vector of the IFS. As mentioned earlier in the introductory
section, a FIF is defined in a constructive way through iterations
instead of descriptive one, usually a formula, provided by the
classical methods. Consequently, evaluation of a FIF at a given
point needs, in general,  an iteration process. An explicit
representation (in terms of an infinite series) of a FIF $f$
corresponding to a general IFS (\ref{r6}) defined on $I=[0,1]$ is
given in \cite{SCh}. For a representation of $f$ as the uniform
limit of a sequence of operators, the reader is referred to
\cite{CK3}.
\subsection{Polynomial Spline FIFs}\label{r1ps22}
 For a prescribed  data set, a  polynomial FIF with
$\mathcal{C}^r$-continuity is obtained as the fixed point of IFS
(\ref{r6}), where the scaling factors $\alpha_i$ and the
polynomials $R_i$ involved in (\ref{r6}) are chosen according to
the following proposition.
\begin{prop} \label{P2}(Barnsley et al. \cite{BH})
Let $x_1<x_2<\dots <x_{N}$ and $L_i(x) = a_i x + b_i$, $i\in J$,
be the affine functions satisfying (\ref{r1}). Let
$F_i(x,y)=\alpha_i y + R_i(x), i\in J$, satisfy (\ref{r2}).
Suppose that for some integer $p\geq0$, $|\alpha_i|< a_i^p$ and
$R_i\in \mathcal{C}^p(I)$, $i\in J$. Let
\begin{eqnarray*}
F_{i,k}(x,y)=\frac{\alpha_i y+ R_i^{(k)}(x)}{a_i^k}, \;\; y_{1,k}
= \frac
{R_1^{(k)}(x_1)}{a_1^k-\alpha_1},\:y_{N,k}=\frac{R_{N-1}^{(k)}(x_N)}{a_{N-1}^k-\alpha_{N-1}};
~~~k = 1,2,\dots,p.
\end{eqnarray*}
If $ F_{i-1,k}(x_N, y_{N,k})=F_{i,k}(x_1,y_{1,k})$  for $\ i =2,3,
\dots,N-1$ and  $\ k =1,2,\dots,p$, then the IFS
$\big\{\mathbb{R}^2;\big(L_i(x),F_i(x,y)\big),i\in J\big\}$
determines a FIF ${f}\in \mathcal{C}^p[x_1,x_N]$, and $f^{(k)}$ is
the FIF determined by
$\big\{\mathbb{R}^2;\big(L_i(x),F_{i,k}(x,y)\big),i\in J\big\}$
for $ k = 1,2,\dots,p$.
\end{prop}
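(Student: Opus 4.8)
The plan is to realize $f$ as the uniform limit of the iterates of a contraction operator acting on a space of smooth functions, and to show that the associated sequences of derivatives converge uniformly to the FIFs determined by the ``differentiated'' IFSs. For $p = 0$ the assertion is just Proposition \ref{P1}, so assume $p \ge 1$.

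Since the IFS is of the form (\ref{r6}), the functional equation (\ref{r5}) reads $f(x) = \alpha_i f(L_i^{-1}(x)) + R_i(L_i^{-1}(x))$ for $x \in I_i$; equivalently, $f$ is the fixed point of the operator $T$ given by $(Th)(x) = \alpha_i h(L_i^{-1}(x)) + R_i(L_i^{-1}(x))$ on $I_i$, acting on the complete metric space $\mathcal{C}^*(I) = \{ h \in \mathcal{C}(I) : h(x_1) = y_1,\ h(x_N) = y_N \}$ with the uniform metric. By (\ref{r2}), $T$ is a contraction with ratio at most $\max_i |\alpha_i| \le \kappa < 1$, and its unique fixed point is the FIF $f$ of Proposition \ref{P1}. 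For each $k \in \{1, \dots, p\}$ I would analogously introduce $T_k$ on $\mathcal{C}_k^*(I) = \{ h \in \mathcal{C}(I) : h(x_1) = y_{1,k},\ h(x_N) = y_{N,k} \}$ by $(T_k h)(x) = a_i^{-k}\big( \alpha_i h(L_i^{-1}(x)) + R_i^{(k)}(L_i^{-1}(x)) \big)$ on $I_i$. Using the definitions of $y_{1,k}$, $y_{N,k}$ and the hypothesis $F_{i-1,k}(x_N, y_{N,k}) = F_{i,k}(x_1, y_{1,k})$ one checks that $T_k$ maps $\mathcal{C}_k^*(I)$ into itself; since $0 < a_i = (x_{i+1}-x_i)/(x_N - x_1) < 1$ and $|\alpha_i| < a_i^p \le a_i^k$, the operator $T_k$ is a contraction, and its fixed point is precisely the FIF $g_k$ determined by $\{ \mathbb{R}^2; (L_i(x), F_{i,k}(x,y)), i \in J \}$.

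Then I would run the iteration starting from a sufficiently smooth seed. Choose $f_0 \in \mathcal{C}^p(I)$ with $f_0(x_1) = y_1$, $f_0(x_N) = y_N$ and $f_0^{(k)}(x_1) = y_{1,k}$, $f_0^{(k)}(x_N) = y_{N,k}$ for $k = 1, \dots, p$ (e.g. a Hermite interpolating polynomial of degree $2p+1$), and set $f_n = T^n f_0$. The crucial elementary observation is that, because $L_i^{-1}$ is affine, $(Th)^{(k)}(x) = a_i^{-k}\big( \alpha_i h^{(k)}(L_i^{-1}(x)) + R_i^{(k)}(L_i^{-1}(x)) \big) = (T_k h^{(k)})(x)$ on $I_i$ for every $h \in \mathcal{C}^p(I)$. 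Feeding this into an induction on $n$, together with the matching of one-sided $k$-th derivatives at the interior knots $x_2, \dots, x_{N-1}$, shows that each $f_n$ lies in $\mathcal{C}^p(I)$, satisfies the same boundary conditions on its derivatives as $f_0$, and obeys $f_n^{(k)} = T_k^{\,n} f_0^{(k)}$ for all $n$ and all $k \le p$. As each $T_k$ is a contraction, $f_n^{(k)} \to g_k$ uniformly on $I$ for $k = 1, \dots, p$, while $f_n \to f$ uniformly.

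To conclude, I would invoke the classical theorem on term-by-term differentiation: if $f_n \to f$ and $f_n' \to g_1$ uniformly with $f_n \in \mathcal{C}^1$, then $f \in \mathcal{C}^1$ and $f' = g_1$; applying this successively to $f_n^{(k-1)} \to g_{k-1}$ and $f_n^{(k)} \to g_k$ for $k = 1, \dots, p$ gives $f \in \mathcal{C}^p(I)$ and $f^{(k)} = g_k$ for each $k$, which is the statement. The main obstacle --- and the unique point at which the precise form of the hypothesis is used --- is the verification that $T$ preserves $\mathcal{C}^p$-regularity across the interior knots: one must show that the left-hand and right-hand $k$-th derivatives of $Th$ agree at each $x_j$, and, after substituting $h^{(k)}(x_1) = y_{1,k}$ and $h^{(k)}(x_N) = y_{N,k}$, this collapses exactly to the condition $F_{j-1,k}(x_N, y_{N,k}) = F_{j,k}(x_1, y_{1,k})$; the remainder is routine contraction-mapping and uniform-convergence bookkeeping.
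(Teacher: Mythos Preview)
The paper does not supply a proof of this proposition: it is quoted verbatim as a result of Barnsley and Harrington \cite{BH} and used as a black box in the construction of the rational cubic spline FIF. Your argument is correct and is essentially the original Barnsley--Harrington proof: iterate the Read--Bajraktarevi\'{c} operator from a $\mathcal{C}^p$ seed matching all the endpoint derivative values, verify via the chain rule that differentiation intertwines $T$ with $T_k$, use the join-up hypothesis $F_{i-1,k}(x_N,y_{N,k})=F_{i,k}(x_1,y_{1,k})$ to keep the iterates in $\mathcal{C}^p$, and pass to the limit by the standard theorem on uniform convergence of derivatives.
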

\section{Rational Cubic Spline FIFs Involving One Family of Shape Parameters}\label{r1ps3}
\setcounter{equation}{0} In section \ref{r1ps31}, we construct
$\mathcal{C}^1$-rational spline FIFs where the inhomogeneous terms
are  rational functions with cubic numerators and preassigned
quadratic denominators. In section \ref{r1ps32}, error analysis of
the rational cubic spline FIF is given with the assumption that
the data defining function $f \in \mathcal{C}^4$. Further, by
admitting a relatively weaker condition on the data generating
function $f$, namely $f\in$ $\mathcal{C}^1$, the uniform
convergence of the classical rational cubic spline is established.
This serves as an addendum to the convergence results by Delbourgo
and Gregory \cite{DG4}, and it is utilized to establish the
uniform convergence of the developed rational cubic spline FIF.
\subsection{Existence of $\mathcal{C}^1$-rational Cubic Spline FIF}\label{r1ps31}
\begin{thm}\label{theorem1}
Suppose a data set $\{(x_i,y_i,d_i):i=1,2,\dots, N\}$ is given,
where $x_1 < x_2 < \dots <x_N$. Consider the rational  IFS
$\{(L_i(x), F_i(x,y): i \in J\}$ where $L_i(x)=a_ix+b_i$ and
$F_i(x,y)=\alpha_i y + R_i(x)$, $|\alpha_i| \le \kappa a_i$,
$0\le\kappa< 1$, $i \in J$. Further, let
$R_i(x)=\frac{P_i(x)}{Q_i(x)}$ where $P_i(x)$ is a cubic
polynomial and $Q_i(x)\neq 0$ is a preassigned quadratic
polynomial such that $F_i(x_1,y_1)=y_i$, $F_i(x_N,y_N)=y_{i+1}$
are satisfied. With $F_{i,1}(x,y)=\frac{\alpha_i y+
R_i^{(1)}(x)}{a_i}$, let $F_{i,1}(x_1,d_1)=d_i$ and
$F_i(x_N,d_N)=d_{i+1}$. Then a $\mathcal{C}^1$-rational cubic
spline FIF $S$ satisfying $S(x_i)=y_i$, $S^{(1)}(x_i)=d_i$,
$i=1,2,\dots, N$ exists, and it is unique for a fixed choice of
the shape parameters and the scaling factors.
\end{thm}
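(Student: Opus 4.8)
The plan is to produce $S$ and establish its $\mathcal{C}^1$-smoothness by invoking Proposition~\ref{P2} with $p=1$, after first pinning down the inhomogeneous terms $R_i$. Solving (\ref{r1}) forces $a_i=\frac{x_{i+1}-x_i}{x_N-x_1}$ and $b_i=\frac{x_N x_i-x_1 x_{i+1}}{x_N-x_1}$, so $0<a_i<1$; consequently $|\alpha_i|\le\kappa a_i<a_i<1$, which simultaneously yields the Lipschitz condition in (\ref{r2}) (with Lipschitz bound $\kappa\max_j a_j<1$) and the hypothesis $|\alpha_i|<a_i^{1}$ required by Proposition~\ref{P2}. First I would rewrite the four prescribed constraints as conditions on $R_i$ alone: using $F_i(x,y)=\alpha_i y+R_i(x)$ and $F_{i,1}(x,y)=\frac{\alpha_i y+R_i'(x)}{a_i}$, the requirements $F_i(x_1,y_1)=y_i$, $F_i(x_N,y_N)=y_{i+1}$, $F_{i,1}(x_1,d_1)=d_i$, $F_{i,1}(x_N,d_N)=d_{i+1}$ become, respectively, $R_i(x_1)=y_i-\alpha_i y_1$, $R_i(x_N)=y_{i+1}-\alpha_i y_N$, $R_i'(x_1)=a_i d_i-\alpha_i d_1$, $R_i'(x_N)=a_i d_{i+1}-\alpha_i d_N$.

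Next I would show that these four conditions determine $R_i=P_i/Q_i$ uniquely. Since $Q_i$ is a preassigned quadratic not vanishing on $I$, the identities $P_i(x)=Q_i(x)R_i(x)$ and $P_i'(x)=Q_i'(x)R_i(x)+Q_i(x)R_i'(x)$ convert — using $Q_i(x_1)\neq0$ and $Q_i(x_N)\neq0$ — the four conditions above into prescribed two-point Hermite data $\big(P_i(x_1),P_i'(x_1),P_i(x_N),P_i'(x_N)\big)$ for the cubic $P_i$. Two-point cubic Hermite interpolation is unisolvent, so $P_i$, and hence $R_i$, exists and is unique; moreover $R_i\in\mathcal{C}^{\infty}(I)\subset\mathcal{C}^1(I)$ because $Q_i\neq0$ on $I$, and after enlarging the compactum $D$ if necessary (legitimate since $|\alpha_i|<1$ and each $R_i$ is bounded on $I$) the maps $F_i$ send $K$ into $D$, so (\ref{r2}) holds in full. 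Proposition~\ref{P1} then furnishes a continuous FIF $S$ whose graph is the attractor of $\{(L_i,F_i):i\in J\}$ and which interpolates $\{(x_i,y_i)\}$, i.e. $S(x_i)=y_i$.

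Finally I would apply Proposition~\ref{P2} with $p=1$ to upgrade $S$ to $\mathcal{C}^1$. Its first two hypotheses, $|\alpha_i|<a_i$ and $R_i\in\mathcal{C}^1(I)$, are already in hand. For the join-up hypothesis, note that at the endpoints the derivative conditions specialize to $R_1'(x_1)=(a_1-\alpha_1)d_1$ and $R_{N-1}'(x_N)=(a_{N-1}-\alpha_{N-1})d_N$, whence $y_{1,1}=\frac{R_1'(x_1)}{a_1-\alpha_1}=d_1$ and $y_{N,1}=\frac{R_{N-1}'(x_N)}{a_{N-1}-\alpha_{N-1}}=d_N$; hence for $i=2,\dots,N-1$ both sides of $F_{i-1,1}(x_N,y_{N,1})=F_{i,1}(x_1,y_{1,1})$ equal $d_i$ (the left since $F_{i-1,1}(x_N,d_N)=d_i$, the right since $F_{i,1}(x_1,d_1)=d_i$), so the join-up conditions hold automatically. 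Proposition~\ref{P2} then gives $S\in\mathcal{C}^1[x_1,x_N]$ with $S^{(1)}$ equal to the FIF of $\{(L_i,F_{i,1}):i\in J\}$; that IFS has $y$-Lipschitz constant $|\alpha_i|/a_i\le\kappa<1$ and satisfies $F_{i,1}(x_1,d_1)=d_i$, $F_{i,1}(x_N,d_N)=d_{i+1}$ together with the matching endpoint consistency $y_{1,1}=d_1$, $y_{N,1}=d_N$, so its FIF interpolates $\{(x_i,d_i)\}$, i.e. $S^{(1)}(x_i)=d_i$. Uniqueness follows at once: once the shape parameters inside the $Q_i$ and the scaling factors $\alpha_i$ are fixed, the cubics $P_i$ are uniquely determined above, hence the rational IFS is fixed and its attractor — the graph of $S$ — is unique by the Banach fixed point theorem. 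The only genuinely delicate points are the unisolvence of the rational two-point Hermite problem for $P_i$ and the consistency of the join-up conditions of Proposition~\ref{P2}; as shown, rewriting everything in terms of $R_i$ renders both immediate, so no real obstacle arises.
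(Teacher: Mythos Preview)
Your argument is correct and follows the same overall architecture as the paper's proof: determine the inhomogeneous terms $R_i$ from the four endpoint constraints, then appeal to Propositions~\ref{P1} and~\ref{P2} to obtain the $\mathcal{C}^1$ FIF and its uniqueness via the Banach fixed point theorem.

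The one genuine difference is in how $R_i$ is pinned down. The paper writes $R_i$ explicitly in a Bernstein-type basis with the preassigned denominator $1+(r_i-3)\theta(1-\theta)$, then substitutes the four constraints into the functional equations (\ref{r9}) and (\ref{r9b}) to solve for the four numerator coefficients $A_i,B_i,C_i,D_i$ one at a time, arriving at the closed form (\ref{r10}). You instead convert the constraints into two-point Hermite data for the cubic $P_i$ and invoke unisolvence abstractly. Your route is cleaner for the bare existence/uniqueness claim and has the merit of handling an arbitrary nonvanishing quadratic $Q_i$ without committing to a particular parametrization; the paper's route, by contrast, produces the explicit formula (\ref{r10}), which is indispensable for the convergence analysis (Section~\ref{r1ps32}) and the shape-preservation conditions (Section~\ref{r1ps4}) that follow. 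You also verify the join-up hypothesis $F_{i-1,1}(x_N,y_{N,1})=F_{i,1}(x_1,y_{1,1})$ of Proposition~\ref{P2} more carefully than the paper, which simply posits $y_{1,1}=d_1$, $y_{N,1}=d_N$ without checking these against the defining formulas.
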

\begin{proof}
 Set $I=[x_1,x_N]$, and $h_i=x_{i+1}-x_i$, $i \in J$.
Consider the IFS $\mathcal{I}=\big\{\big(L_i(x), F_i(x,y)\big): i
\in J\big\}$ where $L_i(x)= a_i x+ b_i$ satisfy (\ref{r1}), and
$F_i(x,y)=\alpha_i y + R_i(x)$ fulfill the \emph{join-up}
conditions $F_i(x_1,y_1)=y_i$, $F_i(x_N,y_N)=y_{i+1}$. Let
\begin{equation*}
 R_i(x)\equiv R_i^*\big(x_1+ \theta(x_N-x_1)\big)= \frac{A_i(1-\theta)^3  + B_i \theta (1- \theta)^2 + C_i\theta^2 (1-\theta) + D_i\theta^3}
{1 + (r_i -3)  \theta (1-\theta)}, \; i \in J.
\end{equation*}
Consider $\mathcal{F} := \{ g\in$ $\mathcal{C} (I) | \; g(x_1) =
y_1 \; \text{and} \; g(x_N) = y_N \}$. The  uniform metric
$d(g,h):=\sup\{|g(x)-h(x)|: x \in I\}$ completes $\mathcal{F}$.
The IFS $\mathcal{I}$ induces a contraction map
$T:\mathcal{F}\rightarrow \mathcal{F}$
$$
(Tg)\big(L_i(x)\big):=F_i\big(x,g(x)\big),\; x\in I.$$  The
contraction map $T$ has a unique fixed point $S \in \mathcal{F}$,
which satisfies the functional equation:
\begin{equation}\label{r9}
\begin{split}
 S\big(L_i(x)\big) &= F_i\big(x, S(x)\big),\\
 &= \alpha_i S(x) + \frac{A_i(1-\theta)^3  + B_i\theta (1- \theta)^2 + C_i \theta^2 (1-\theta) + D_i\theta^3}
{1 + (r_i -3)  \theta (1-\theta)}.
\end{split}
\end{equation}
The conditions $F_i(x_1,y_1)=y_i$, $F_i(x_N,y_N)=y_{i+1}$ can be
reformulated as the interpolation conditions $S(x_i)=y_i$,
$S(x_{i+1})=y_{i+1}$, $i \in J$.  Note that: (i) the affine map
$L_i$ satisfies $L_i(x_1)=x_i$ and $L_i(x_N)=x_{i+1}$, (ii)
$x=x_1$ and $x=x_N$ correspond to $\theta=0$ and $\theta=1$
respectively. Therefore, the interpolatory conditions determine
the coefficients $A_i$ and $D_i$ as follows. Substituting $x =
x_1$ in (\ref{r9}) we get
\begin{equation*}
\begin{split}
 S(L_i(x_1)) & = \alpha_i S(x_1) + A_i,  \\
\implies & y_i  = \alpha_i y_1 + A_i,  \\
\implies & A_i  = y_i - \alpha_i y_1.
\end{split}
\end{equation*}
Similarly, taking $x = x_N$ in (\ref{r9}) we obtain $ D_i  =
y_{i+1}- \alpha_i y_N$. \\Now we make $S \in \mathcal{C}^1$  by
imposing the conditions prescribed in
Barnsley-Harrington theorem (see Proposition \ref{P2}).\\
Assume $|\alpha_i| \le \kappa a_i$, $ i \in J$, where $0 \le
\kappa < 1$. We have $R_i \in \mathcal{C}^1(I)$. Adhering to the
notations of Proposition \ref{P2}, we let:
\begin{equation*}
\begin{split}
F_{i,1}(x, y)&= \frac{\alpha_i y+ R_i^{(1)}(x)}{a_i},\\
y_{1,1}=d_1,\; y_{N,1}&=d_N,\; F_{i,1}(x_1, d_1)=d_i,\;
F_{i,1}(x_N,d_N)=d_{i+1};\; i \in J.
\end{split}
\end{equation*}
Then, by Proposition \ref{P2} the FIF $S$ belongs to the class
$\mathcal{C}^1(I)$. Further, $S^{(1)}$ is the fractal function
determined by the IFS $\mathcal {I}^*\equiv\big\{\big(L_i(x),
F_{i,1}(x,y)\big): i \in J\big\}$. Consider $\mathcal{F}^* := \{g
\in \mathcal{C}(I): g(x_1)=d_1~\text{and}~g(x_N)=d_N\}$ endowed
with the uniform metric. The IFS $\mathcal{I}^*$ induces a
contraction map $T^*: \mathcal{F}^* \rightarrow \mathcal{F}^*$
$$(T^*g^*)\big(L_i(x)\big):=F_{i,1}\big(x, g^*(x)\big),\; x \in I.$$ The fixed
point of $T^*$ is $S^{(1)}$. Consequently, $S^{(1)}$ satisfies the
functional equation:
\begin{equation}\label{r9b}
\begin{split}
 S^{(1)}\big(L_i(x)\big) &= F_{i,1}\big(x, S^{(1)}(x)\big),\\
 &= \frac{\alpha_i S^{(1)}(x) + R_i^{(1)}(x)}{a_i}.
\end{split}
\end{equation}
The  conditions on the map $F_{i,1}$, namely
$F_{i,1}(x_1,d_1)=d_i$ and $F_{i,1}(x_N,d_N)=d_{i+1}$ can be
reformulated as the derivative conditions
$S^{(1)}(x_i)=d_i$ and $S^{(1)}(x_{i+1})=d_{i+1}$, $i \in J$.\\
Applying $x = x_1$ in (\ref{r9b}) we obtain
\begin{equation*}
\begin{split}
 S^{(1)}(L_i(x_1)) a_i &  = \alpha_i S^{(1)}(x_1) + \frac{B_i - r_i A_i}{x_N - x_1}, \\
\implies & d_i a_i (x_N - x_1)   =  \alpha_i d_1  (x_N - x_1) + B_i -   r_i (y_i - \alpha_i y_1),  \\
\implies & B_i = [r_i y_i + h_i d_i] - \alpha_i [r_i y_1 + d_1
(x_N - x_1)].
\end{split}
\end{equation*}
Similarly, the substitution  $x = x_N$ in (\ref{r9b}) yields
\begin{equation*}
C_i  = [r_i y_{i+1} - h_i d_{i+1}] - \alpha_i [r_i y_N - d_N (x_N
- x_1) ].
\end{equation*}
Therefore, the desired $\mathcal{C}^1$-rational cubic spline FIF
is described as:
 \begin{equation}\label{r10}
 S(L_i(x)) = \alpha_i S(x) + \frac{P_i(x)} {Q_i(x)},
\end{equation}
 $P_i(x)\equiv P_i^* (\theta) =   (y_i - \alpha_i y_1)  (1-\theta)^3 + \{ [r_i y_i + h_i d_i] - \alpha_i [r_i y_1 + d_1 (x_N - x_1)  ] \} \theta (1- \theta)^2 $ \\
\hspace*{1.5cm}$+  \{ [r_i y_{i+1} - h_i d_{i+1}] - \alpha_i [r_i y_N - d_N (x_N - x_1)  ] \} \theta^2 (1-\theta) + (y_{i+1} - \alpha_i y_N) \theta^3$,\\
$Q_i(x)\equiv Q_i^*(\theta) = 1 + (r_i -3)  \theta (1-\theta),~ \theta = \frac{x-x_1}{x_N - x_1} $.\\
The parameters $r_i> -1$ can be effectively utilized for the shape
modification and shape preservation of the
$\mathcal{C}^1$-rational cubic spline FIF, and hence referred to
as the shape parameters. \\ Since the FIF $S$ in (\ref{r10}) is
derived as a solution of the fixed point equation $Tg = g$, the
solution is unique for a given choice of the scaling factors and
the shape parameters.
\end{proof}
\begin{rem}\label{rem1}
Using the notations
\begin{equation*}\left.
\begin{split}
E_i &:= \frac{\theta (1-\theta)h_i \{(2\theta-1) \Delta_i +
(1-\theta) d_i - \theta  d_{i+1}\}}{1+(r_i-3)\theta
(1-\theta)},\\
F_i &:=\frac{\theta (1-\theta)\{(2\theta-1)(y_N-y_1) +
(1-\theta)(x_N-x_1) d_1- \theta (x_N-x_1) d_N\}}{1+(r_i-3)\theta
(1-\theta)},
\end{split}\right\}
\end{equation*}
the rational cubic spline FIF in (\ref{r10}) is rewritten as
\begin{equation}\label{r10a}
S(L_i(x)) = \alpha_i S(x)+ \{(1-\theta)y_i+ \theta y_{i+1}+ E_i\}
-\alpha_i \{(1-\theta)y_1+ \theta y_N+ F_i\}
\end{equation}
As the shape parameters $r_i$ are increased, $E_i$ and $F_i$
converges to zero. Thus from (\ref{r10a}), it follows that as the
scaling factors $\alpha_i\rightarrow 0$ and the shape parameters
$r_i\rightarrow +\infty $, the rational cubic spline FIF $S$
converges to the piecewise linear interpolant corresponding to the
given data set. This tension effect ensures that the proposed
rational cubic spline FIF can be used to construct shape
preserving interpolants.
\end{rem}
\begin{rem}\label{rc}
If $\alpha_i = 0 $ for all $ i \in J$, then $S$ reduces to the
piecewise defined  $\mathcal{C}^1$-rational cubic spline $s$
discussed in \cite{DG4}. Therefore, $S$ can be considered as an
extension of the powerful rational cubic spline interpolant. To
illustrate this we proceed as follows. With $\alpha_i=0$ for all
$i \in J$, (\ref{r10}) reduces to
 \begin{equation}\label{r11}
 S(L_i(x)) = \frac{ y_i (1-\theta)^3 + (r_i y_i + h_i d_i)  \theta (1- \theta)^2 + (r_i y_{i+1} - h_i d_{i+1}) \theta^2 (1-\theta) + y_{i+1} \theta^3 }{1 + (r_i -3)  \theta (1-\theta)},  i \in J.
 \end{equation}
 Since $\frac {L_i^{-1}(x) -x_1}{x_N-x_1}= \frac{x-x_i}{h_i}$,
from (\ref{r11}), for $x \in I_i=[x_i, x_{i+1}]$, we have
\begin{equation}\label{r11a}
S(x)\equiv s_i(x) = \frac{ y_i (1-\phi)^3 + (r_i y_i + h_i d_i)
\phi (1- \phi)^2 + (r_i y_{i+1} - h_i d_{i+1}) \phi^2 (1-\phi) +
y_{i+1} \phi^3 }{1 + (r_i -3)  \phi(1-\phi)},
\end{equation}
where  $\phi= \frac{x-x_i}{h_i}$ is a localized variable. The
rational cubic spline $s\in \mathcal{C}^1(I)$ is defined by
$s\big|_{I_i}=s_i$, $i \in J$. With $r_i=3$ and the scaling
factors satisfying $|\alpha_i| \le \kappa a_i$, $\forall~ i \in
J$, our discussion on the rational cubic spline FIF gives a simple
constructive approach to the $\mathcal{C}^1$-cubic Hermite FIF.
Again, when $\alpha_i=0~ \text{and}~ r_i=3, \forall~ i \in J$, the
proposed rational cubic spline FIF recovers the classical
piecewise cubic Hermite interpolant.
\end{rem}
\subsection{Convergence  Analysis of Rational Cubic Spline FIFs}\label{r1ps32}
 With mild conditions on the scaling factors, we establish that
 the rational cubic spline FIF $S$ possesses the same convergence
 properties as that of its classical counterpart $s$. Since
$S$ does not possess a closed form expression, standard methods
such as  Taylor series analysis, Cauchy remainder form, and Peano
Kernel theorem cannot be employed to establish its convergence.
Instead, we derive the convergence of $S$ to the original function
$f$ using the convergence results for its classical counterpart
$s$ and the uniform distance between $S$ and $s$ via the triangle
inequality:
\begin{equation}\label{r14a}
 \| f-S \|_{\infty} \le \|f - s
\|_{\infty} + \|s - S \|_{\infty}.
\end{equation}
\begin{thm}\label{t1}
Let $S$ and $s$,  respectively, be the rational cubic spline FIF
and the classical rational cubic spline for the original function
$ f \in$ $\mathcal{C}^4(I)$ with respect to the interpolation data
$\{(x_i, y_i) : i = 1,2, \dots, N \}$, and let
$S^{(1)}(x_i)=s^{(1)}(x_i)=d_i$. Suppose that the rational
function $R_i$  involved in the IFS generating the FIF $S$
satisfies $ \big| \frac{\partial R_i(\tau_i, \phi)}{\partial
\alpha_i} \big| \le Z_0 $ for $|\tau_i| \in (0, \kappa a_i)$, all
$i \in J$, and for some real constant $Z_0$. Then,
\begin{equation}\label{r21}
\begin{split}
\|f - S \|_{\infty} \le & \frac{h}{4c}~\underset{i \in  J} \max
\big \{ |y_i^{(1)} - d_i|,  |y_{i+1}^{(1)} - d_{i+1}| \}
+ \frac{1}{384 c} \{ h^4 \| f^{(4)} \|_{\infty} (1 + \frac{1}{4}\underset{i \in  J} \max|r_i-3|) \\
 & + 4 ~\underset{i \in  J} \max|r_i-3| \big ( h^3  \| f^{(3)} \|_{\infty}  + 3 h^2  \| f^{(2)} \|_{\infty} \big ) \big \} + \frac {  |\alpha|_{\infty} (  \|s \|_{\infty} +  Z_0)}{ 1 -|\alpha|_{\infty}
 },
\end{split}
\end{equation}
where $|\alpha|_\infty=\max\{\alpha_i: i \in J\}$, $ h = \max \{
h_i : i \in J \} $, and $ c = \min\{ c_i : i \in J \}$ with
\begin{equation}\label{r15}
 c_i = \left\{\begin{array}{rlll}
& \frac{1+ r_i}{4}, \; -1 < r_i <  3,\\
& 1, \; \; \; \; \quad r_i \ge 3.
\end{array}\right.
\end{equation}
\end{thm}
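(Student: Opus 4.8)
The plan is to prove the bound through the triangle inequality \eqref{r14a}, estimating $\|f-s\|_\infty$ and $\|s-S\|_\infty$ separately. As a preliminary I would record that, since $Q_i^*(\phi)=1+(r_i-3)\phi(1-\phi)$ and $0\le\phi(1-\phi)\le\tfrac14$ on $[0,1]$, one has $c_i\le |Q_i^*(\phi)|\le 1+\tfrac14|r_i-3|$ with $c_i$ as in \eqref{r15}; this is where $c=\min_i c_i$ (which satisfies $c\le 1$) enters all denominators. I would also use throughout that $|\alpha|_\infty\le\kappa<1$, since $|\alpha_i|\le\kappa a_i\le\kappa$.

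For the classical term I would work on a generic subinterval $I_i=[x_i,x_{i+1}]$ with localized variable $\phi=\frac{x-x_i}{h_i}$, writing $s|_{I_i}=s_i=P_i^*(\phi)/Q_i^*(\phi)$ as in \eqref{r11a}. Introduce the linear interpolant $\ell_i$ of $f$ on $I_i$, the cubic Hermite interpolant $\tilde p_i$ of $f(x_i),f(x_{i+1}),f^{(1)}(x_i),f^{(1)}(x_{i+1})$, and the rational cubic $\tilde s_i=\tilde P_i^*(\phi)/Q_i^*(\phi)$ obtained from \eqref{r11a} by replacing $d_i,d_{i+1}$ with the exact slopes $f^{(1)}(x_i),f^{(1)}(x_{i+1})$; then $f-s_i=(f-\tilde s_i)+(\tilde s_i-s_i)$. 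Since $s_i$ and $\tilde s_i$ share the denominator and their numerators differ only through the slope-linear coefficients $r_iy_i+h_id_i$ and $r_iy_{i+1}-h_id_{i+1}$, a short computation gives $\tilde s_i-s_i=\frac{h_i[(f^{(1)}(x_i)-d_i)\phi(1-\phi)^2-(f^{(1)}(x_{i+1})-d_{i+1})\phi^2(1-\phi)]}{Q_i^*(\phi)}$, so $|\tilde s_i-s_i|\le\frac{h_i}{4c_i}\max\{|f^{(1)}(x_i)-d_i|,|f^{(1)}(x_{i+1})-d_{i+1}|\}$ using $\phi(1-\phi)^2+\phi^2(1-\phi)=\phi(1-\phi)\le\tfrac14$; maximizing over $i\in J$ gives the first term of \eqref{r21} (with $y_k^{(1)}=f^{(1)}(x_k)$). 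For $f-\tilde s_i$ I would note that \eqref{r11a} reduces to the cubic Hermite interpolant at $r_i=3$ and that the numerator's $r_i$-dependence is exactly $(r_i-3)\phi(1-\phi)\ell_i$, so $\tilde P_i^*=\tilde p_i+(r_i-3)\phi(1-\phi)\ell_i$, whence $Q_i^* f-\tilde P_i^*=(f-\tilde p_i)Q_i^*+(r_i-3)\phi(1-\phi)(\tilde p_i-\ell_i)$. Estimating $1/|Q_i^*|\le 1/c_i$ on all of $f-\tilde s_i=(Q_i^* f-\tilde P_i^*)/Q_i^*$ and $|Q_i^*|\le 1+\tfrac14|r_i-3|$ on the Hermite part yields $|f-\tilde s_i|\le\frac{1+\frac14|r_i-3|}{c_i}|f-\tilde p_i|+\frac{|r_i-3|\phi(1-\phi)}{c_i}|\tilde p_i-\ell_i|$. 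The first summand is bounded by the standard estimate $\|f-\tilde p_i\|_{\infty,I_i}\le\frac{h_i^4}{384}\|f^{(4)}\|_{\infty,I_i}$. For the second, $\tilde p_i-\ell_i=h_i\phi(1-\phi)[(f^{(1)}(x_i)-\Delta_i)(1-\phi)+(\Delta_i-f^{(1)}(x_{i+1}))\phi]$ with $\Delta_i=\frac{y_{i+1}-y_i}{h_i}=\frac{f(x_{i+1})-f(x_i)}{h_i}$ (both functions interpolate $f$ at the endpoints, and matching the slopes of $\tilde p_i$ fixes the linear factor), and the Taylor expansions of $f(x_{i+1})$ about $x_i$ and of $f(x_i)$ about $x_{i+1}$ with Lagrange remainders give $|f^{(1)}(x_i)-\Delta_i|,\,|\Delta_i-f^{(1)}(x_{i+1})|\le\frac{h_i}{2}\|f^{(2)}\|_{\infty,I_i}+\frac{h_i^2}{6}\|f^{(3)}\|_{\infty,I_i}$; together with $\phi(1-\phi)\le\tfrac14$ (so $[\phi(1-\phi)]^2\le\tfrac1{16}$) this gives $\frac{|r_i-3|\phi(1-\phi)}{c_i}|\tilde p_i-\ell_i|\le\frac{|r_i-3|}{c_i}\big(\frac{h_i^2}{32}\|f^{(2)}\|_{\infty,I_i}+\frac{h_i^3}{96}\|f^{(3)}\|_{\infty,I_i}\big)$. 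Collecting these and replacing $h_i,c_i,|r_i-3|,\|f^{(k)}\|_{\infty,I_i}$ by $h,c,\max_i|r_i-3|,\|f^{(k)}\|_\infty$ (using $c\le1$ to put the Hermite term over $c$) produces exactly the middle bracket of \eqref{r21}.

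For the fractal term I would exploit self-reference. By Remark \ref{rc}, $s$ is the fixed point of the IFS with all scaling factors zero, so $s(L_i(x))=R_i(0;x)$, while \eqref{r9} gives $S(L_i(x))=\alpha_i S(x)+R_i(\alpha_i;x)$; here $R_i$ is affine in $\alpha_i$, which is what makes the hypothesis $\big|\frac{\partial R_i(\tau_i,\phi)}{\partial\alpha_i}\big|\le Z_0$ meaningful. For $x\in I_i$ write $x=L_i(t)$ with $t=L_i^{-1}(x)\in I$; then $S(x)-s(x)=\alpha_i S(t)+\big(R_i(\alpha_i;t)-R_i(0;t)\big)$, and by the mean value theorem in $\alpha_i$ the bracket equals $\alpha_i\frac{\partial R_i}{\partial\alpha_i}(\tau_i;t)$ with $|\tau_i|\le|\alpha_i|\le\kappa a_i$, hence is bounded by $|\alpha_i|Z_0$. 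Therefore $|S(x)-s(x)|\le|\alpha|_\infty(\|S\|_\infty+Z_0)$ for all $x\in I$, i.e. $\|S-s\|_\infty\le|\alpha|_\infty(\|S\|_\infty+Z_0)$; substituting $\|S\|_\infty\le\|s\|_\infty+\|S-s\|_\infty$ and solving for $\|S-s\|_\infty$ gives $\|S-s\|_\infty\le\frac{|\alpha|_\infty(\|s\|_\infty+Z_0)}{1-|\alpha|_\infty}$, the last term of \eqref{r21}. Inserting the three estimates into \eqref{r14a} finishes the proof.

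The crux is the second paragraph: the careful bookkeeping needed to land on the exact constants $\frac1{384},\frac1{96},\frac1{32}$ and the factor $1+\frac14\max_i|r_i-3|$. The two delicate choices are (i) \emph{not} cancelling the common factor $Q_i^*$ from the Hermite part of $Q_i^* f-\tilde P_i^*$, so that the bound $|Q_i^*(\phi)|\le1+\frac14|r_i-3|$ produces the stated multiplier, and (ii) expanding $f^{(1)}(x_i)-\Delta_i$ to exactly third order, so that both $\|f^{(2)}\|$ and $\|f^{(3)}\|$ appear with the right weights. By contrast the fractal estimate is routine: the only new feature relative to the classical case is the term $\alpha_i S(x)$, which is absorbed by the contraction-type argument above, and the denominator bounds are immediate.
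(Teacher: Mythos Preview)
Your proof is correct. The main difference from the paper is in how the two terms of the triangle inequality are handled.

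For the classical term $\|f-s\|_\infty$, the paper simply invokes the known local error bound from Delbourgo--Gregory \cite{DG4} (their equation displayed as \eqref{r14}) and takes the maximum over $i$. You instead re-derive that bound from scratch via the decomposition $f-s_i=(f-\tilde s_i)+(\tilde s_i-s_i)$ and the identity $Q_i^* f-\tilde P_i^*=(f-\tilde p_i)Q_i^*+(r_i-3)\phi(1-\phi)(\tilde p_i-\ell_i)$. Your route is more self-contained and makes transparent \emph{why} the particular constants $\tfrac{1}{384}$, $\tfrac{1}{96}$, $\tfrac{1}{32}$ and the factor $1+\tfrac14|r_i-3|$ arise, at the cost of extra bookkeeping; the paper's route is shorter but relies on an external reference. (Your parenthetical ``using $c\le1$ to put the Hermite term over $c$'' is unnecessary: the $1/c_i$ already appears from bounding $1/|Q_i^*|$, so no extra step is needed.)

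For the fractal term $\|S-s\|_\infty$, the paper writes $\|S-s\|_\infty=\|T_\alpha S-T_{\mathbf 0}s\|_\infty\le\|T_\alpha S-T_\alpha s\|_\infty+\|T_\alpha s-T_{\mathbf 0}s\|_\infty$, bounding the first summand by $|\alpha|_\infty\|S-s\|_\infty$ (contraction of $T_\alpha$) and the second by $|\alpha|_\infty(\|s\|_\infty+Z_0)$ (mean value theorem in $\alpha_i$). You instead bound $\|S-s\|_\infty\le|\alpha|_\infty(\|S\|_\infty+Z_0)$ directly and then substitute $\|S\|_\infty\le\|s\|_\infty+\|S-s\|_\infty$. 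Both arguments are elementary rearrangements of the same fixed-point inequality and yield the identical estimate \eqref{r20}; your observation that $R_i$ is in fact affine in $\alpha_i$ (so the mean value step is an exact identity) is a nice simplification the paper does not make explicit.
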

\begin{proof}
For a prescribed set of data and  $\alpha_i$ satisfying $
|\alpha_i| \le \kappa a_i,  i \in J $, the rational cubic spline
FIF $S \in$ $\mathcal{C}^1(I)$ is the fixed point of the
Read-Bajraktarevi\'{c} operator $T_{\alpha}$:
 \begin{equation}\label{r13}
  (T_{\alpha}S)(x) = \alpha_i S(L_i^{-1}(x)) + R_i (\alpha_i,
  \phi),
 \end{equation}
where $ R_i (\alpha_i, \phi) = \frac{P_i^*(\alpha_i,
\phi)}{Q_i^*(\phi)}$, $\phi = \frac{L_i^{-1}(x) - x_1}{x_N -
x_1}=\frac{x - x_i}{h_i} , \; x \in [x_i, x_{i+1}], \; i \in J,$
with $P_i ^*$ and $Q_i^*$  as in (\ref{r10}). Note that the
subscript $\alpha$ is used to emphasize the dependence of the map
$T$ on the scale vector $\alpha$. The coefficients of the rational
function $R_i$ depend on the scaling factor $\alpha_i$, and hence
$R_i$ can be thought of as a function of $\alpha_i$ and $\phi$.
The interpolants $S$ and $s$ are fixed points of $T_\alpha$ with
$\alpha \ne {\bf 0}$ and $ \alpha = {\bf 0} $ respectively. We
know  \cite{DG4} that for $ x \in [x_i, x_{i+1}]$,
\begin{equation}\label{r14}
\begin{split}
|f(x) - s(x) | \le & \frac{h_i}{4c_i} \max \Big \{ |y_i^{(1)} -
d_i|,  |y_{i+1}^{(1)} - d_{i+1}|\Big \}
+ \frac{1}{384 c_i} \Big\{ h_i^4 \| f^{(4)} \| (1 + \frac{|r_i -3| }{4}) \\
 & + 4 |r_i - 3| \big ( h_i^3  \| f^{(3)} \|  + 3 h_i^2  \| f^{(2)} \|\big ) \Big
 \},
\end{split}
\end{equation}
where $ y_i^{(1)} = f^{(1)} (x_i)$ for $i = 1,2, \dots, N,$ and
$\|.\|$ denotes the uniform norm on $[x_i,x_{i+1}]$.\\ For a fixed
choice of scale vector $\alpha \ne {\bf 0}$ and for $ x \in [x_i,
x_{i+1}]$, from (\ref{r13}) we obtain:
\begin{equation*}
\begin{split}
 |T_{\alpha} S(x) - T_{\alpha} s(x) |  & = \Big|\big \{ \alpha_i  S(L_i^{-1}(x)) + R_i (\alpha_i, \phi )\big\} - \big\{ \alpha_i s(L_i^{-1}(x)) + R_i (\alpha_i, \phi )\big\} \Big|, \\
& \le |\alpha|_{\infty} \|S - s \|_{\infty}.
\end{split}
\end{equation*}
From the above inequality we deduce:
\begin{equation}\label{r16}
\|T_{\alpha} S- T_{\alpha} s \|_{\infty} \le  |\alpha|_{\infty}
\|S - s \|_{\infty}.
\end{equation}
Let $ x \in [x_i, x_{i+1}]$  and  $\alpha \ne {\bf 0}$. Using
(\ref{r13}) and the Mean Value Theorem:
\begin{equation*}
\begin{split}
 |T_{\alpha} s(x) - T_{\bf 0} s(x) |  & = \Big|\big \{ \alpha_i  s(L_i^{-1}(x)) + R_i (\alpha_i, \phi )\big\} -  R_i (0, \phi)  \Big|, \\
& \le | \alpha_i| \|s \|_{\infty} + | \alpha_i|  \Big |
\frac{\partial R_i(\tau_i, \phi)}{\partial \alpha_i} \Big |, \\&
\le |\alpha_i| ( \|s \|_{\infty} + Z_0),
\end{split}
\end{equation*}
Thus,
\begin{equation}\label{r17}
 \|T_{\alpha} s- T_{\bf 0} s \|_{\infty} \le |\alpha|_{\infty}  ( \|s \|_{\infty} +
 Z_0).
\end{equation}
Using  (\ref{r16}) and (\ref{r17}),
\begin{equation*}
\begin{split}
 \|S - s \|_{\infty} = \| T_{\alpha} S - T_{\bf 0} s \|_{\infty} &  \le  \| T_{\alpha} S - T_{\alpha} s \|_{\infty} + \| T_{\alpha} s - T_{\bf 0} s \|_{\infty}, \\
& \le |\alpha|_{\infty} \| S - s \|_{\infty} + |\alpha|_{\infty} (
\|s \|_{\infty} +  Z_0),
\end{split}
\end{equation*}
which simplifies to
\begin{equation}\label{r20}
 \|S - s \|_{\infty} \le  \frac {  |\alpha|_{\infty} (  \|s \|_{\infty} +  Z_0)}{ 1 -|\alpha|_{\infty}
 }.
\end{equation}
 The required assertion follows  from (\ref{r14a}), (\ref{r14}), and
(\ref{r20}). However, in what follows, we find an upper bound for
$ \|s \|_{\infty}$ and estimate $Z_0$, if not optimally, at least
practically.\\
Let us introduce the notations: $ |y|_{\infty} = \max \{|y_i| : 1
\le i \le N \} $, $ |d|_{\infty} = \max \{|d_i| : 1 \le i \le N \}
$, and $ |r|_{\infty} = \max \{ |r_i| :  i \in J \} $. From
(\ref{r11a}), for $ x \in [x_1, x_N]$,
$$|s(x)| \le \frac {\max \{|P_i^{**}(\phi) | : i \in J , 0 \le \phi \le 1 \}}{\min \{| Q_i^*(\phi)| : i \in J , 0 \le \phi \le 1\}}, $$
where $P_i^{**} (\phi)$ is the numerator in (\ref{r11a}). Using
extremum calculations of polynomials, {\small
\begin{equation*}
 \begin{split}
 |P_i^{**}(\phi) |  \le | y_i|  (1-\phi)^3 & + (|r_i| | y_i| + h_i |d_i|)  \phi (1- \phi)^2 + ( |r_i| | y_{i+1}| + h_i | d_{i+1}|) \phi^2 (1-\phi) +| y_{i+1}| \phi^3,\\
\implies \underset{\phi \in [0,1]} \max |P_i^{**}(\phi) |  & \le \max\{| y_i|,| y_{i+1}|\} + \frac{1}{4}  \Big(|r_i| \max\{| y_i|,| y_{i+1}|\} + h_i \max\{|d_i|, | d_{i+1}|\}\Big), \\
\implies \underset{ i \in J, \phi \in [0,1]} \max |P_i^{**}(\phi)
| & \le  |y|_{\infty} + \frac{1}{4} ( |r|_{\infty} |y|_{\infty} +
h |d|_{\infty}),
 \end{split}
\end{equation*} }
and $|Q_i^* (\phi)| = Q_i^* (\phi)  \ge c_i$. Therefore,
\begin{equation}\label{r18}
 \|s \|_{\infty} \le \frac { |y|_{\infty} + \frac{1}{4} (  |r|_{\infty} |y|_{\infty} + h  |d|_{\infty}) } { \min\{ c_i : i \in J \}
 }.
\end{equation}
Now, from  (\ref{r10}) and  (\ref{r13}), for $x \in [x_i, x_{i+1}]$,
\begin{equation*}
 \frac{\partial R_i}{\partial \alpha_i} = \frac{ \tilde{P_i}(\phi)}
 {Q_i^*
 (\phi)},
\end{equation*}
where $\tilde{P_i}(\phi) =   - \{ y_1 (1-\phi)^3 + (r_i y_1+ (x_N
- x_1) d_1)  \phi (1- \phi)^2 + (r_i y_N -  d_N  (x_N - x_1))
\phi^2 (1-\phi) + y_N \phi^3 \}$. Using similar extremum
calculations,
\begin{equation}\label{r19}
 \Big | \frac{\partial R_i}{\partial \alpha_i} \Big |  \le \;  Z_0 = \frac {\max\{|y_1|, |y_N|\}(1 + \frac{1}{4} |r|_{\infty}) + \frac{1}{4}|I| \max\{ |d_1|,|d_N|\} } { \min\{ c_i : i \in J \}
 }~ \forall~ i \in J,
\end{equation}
where $ |I| = x_N - x_1$.
\end{proof}
\noindent Due to the principle of construction of a smooth FIF,
for $S$ to be in the class $\mathcal{C}^1(I)$, we impose $
|\alpha_i| \le \kappa a_i = \frac{\kappa h_i}{x_N - x_1}$. Hence,
$ |\alpha|_{\infty} \le \frac{\kappa h} {x_N - x_1}$, and
consequently $S$ converges uniformly to the original function when
the norm of the partition tends to zero. The following convergence
results are direct consequences of Theorem \ref{t1}.
\begin{cor}\label{t2}
 Let $S$ be the rational FIF with respect to the data points $\{(x_i, y_i) : i = 1,2, \dots, N \}$ corresponding
 to the original function $ f \in$ $\mathcal{C}^4(I)$.
 Suppose $d_i, i = 1,2, \dots, N$, and
$ r_i, \alpha_i, i \in J$ are chosen accordingly.
\begin{enumerate}[(i)]
 \item  If $ y_i^{(1)} - d_i = O(h_i) = y_{i+1}^{(1)} - d_{i+1}$, $r_i > -1$ and $|\alpha_i| < a_i^2 $, then $  \|f - S \|_{\infty} = O (h^2)$.
\item  If $ y_i^{(1)} - d_i = O (h_i^2) = y_{i+1}^{(1)} -
d_{i+1}$, $ r_i - 3 = O(h_i)$ and $|\alpha_i| < a_i^3 $, then $
\|f - S \|_{\infty} = O (h^3)$. \item  If $ y_i^{(1)} - d_i = O
(h_i^3) = y_{i+1}^{(1)} - d_{i+1}$,  $ r_i - 3 = O(h_i^2)$ and
$|\alpha_i| < a_i^4$, then $  \|f - S \|_{\infty} = O (h^4)$.
\end{enumerate}
\end{cor}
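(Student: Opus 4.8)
The plan is to read off all three rates directly from the master estimate (\ref{r21}) of Theorem \ref{t1}, by substituting the hypotheses of each item and tracking the power of $h$ in each of the three groups of terms on its right-hand side. No new mechanism is needed beyond Theorem \ref{t1}; the corollary is a bookkeeping consequence.

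First I would record the uniform-in-$h$ control of the constants appearing in (\ref{r21}). In every item the shape parameters stay in a bounded set lying strictly to the right of $-1$ (indeed in items (ii) and (iii) one has $r_i - 3 \to 0$), so $c = \min\{c_i : i \in J\}$ in (\ref{r15}) is bounded below by a positive constant independent of the partition, and $|r|_\infty = \max\{|r_i| : i \in J\}$ is bounded above. Since $y_i = f(x_i)$ with $f \in \mathcal{C}^4(I)$ and the prescribed slopes $d_i$ approximate the bounded quantities $f^{(1)}(x_i)$, the quantities $|y|_\infty$ and $|d|_\infty$ are bounded; hence (\ref{r18}) and (\ref{r19}) show $\|s\|_\infty$ and $Z_0$ remain bounded as the mesh norm $h \to 0$. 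Finally, because $S \in \mathcal{C}^1(I)$ forces $|\alpha_i| \le \kappa a_i = \kappa h_i/(x_N-x_1)$, one has $|\alpha|_\infty \le \kappa h/(x_N-x_1) \to 0$, so $1 - |\alpha|_\infty$ is bounded below by a positive constant. Thus the prefactors $\frac{1}{4c}$, $\frac{1}{384c}$ and $\frac{\|s\|_\infty + Z_0}{1-|\alpha|_\infty}$ in (\ref{r21}) are all $O(1)$, and moreover $|\alpha_i| < a_i^p$ translates into $|\alpha|_\infty = O(h^p)$.

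Next I would substitute the hypotheses item by item. For (i): the first summand of (\ref{r21}) is $\frac{h}{4c}\,O(h) = O(h^2)$; in the second summand $\max_i |r_i-3|$ is $O(1)$, so it equals $O(h^4) + O(h^3) + O(h^2) = O(h^2)$; and $|\alpha_i| < a_i^2$ makes the third summand $O(h^2)$, giving $\|f-S\|_\infty = O(h^2)$. For (ii): $y_i^{(1)} - d_i = O(h_i^2)$ makes the first summand $O(h^3)$; $r_i - 3 = O(h_i)$ gives $\max_i|r_i-3| = O(h)$, so the second summand is $O(h^4)(1 + O(h)) + O(h)(O(h^3) + O(h^2)) = O(h^3)$; and $|\alpha_i| < a_i^3$ gives a third summand $O(h^3)$, so $\|f-S\|_\infty = O(h^3)$. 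For (iii): $y_i^{(1)} - d_i = O(h_i^3)$ gives a first summand $O(h^4)$; $r_i - 3 = O(h_i^2)$ gives $\max_i|r_i-3| = O(h^2)$, so the second summand is $O(h^4)(1 + O(h^2)) + O(h^2)(O(h^3) + O(h^2)) = O(h^4)$; and $|\alpha_i| < a_i^4$ gives a third summand $O(h^4)$, so $\|f-S\|_\infty = O(h^4)$.

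The argument has no genuinely hard step; the only point I would state explicitly — and the point the phrase \emph{chosen accordingly} in the statement is meant to secure — is the uniform-in-$h$ boundedness of the auxiliary quantities $c$, $\|s\|_\infty$, $Z_0$ and $(1-|\alpha|_\infty)^{-1}$. The one thing worth double-checking is that the higher-order sub-terms $h^3\|f^{(3)}\|_\infty$ and $h^4\|f^{(4)}\|_\infty$ inside the middle group of (\ref{r21}) never dominate the claimed rate; they do not, since in each item they are of strictly higher order in $h$ than the governing term.
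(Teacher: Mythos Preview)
Your proposal is correct and matches the paper's approach exactly: the paper simply declares the corollary to be a direct consequence of Theorem~\ref{t1} and gives no further argument, so your term-by-term substitution of the hypotheses into the estimate (\ref{r21}) is precisely the intended reading. Your explicit remark that the auxiliary constants $c$, $\|s\|_\infty$, $Z_0$ and $(1-|\alpha|_\infty)^{-1}$ must stay uniformly bounded as $h\to 0$, and that this is what the phrase ``chosen accordingly'' is meant to secure, actually fills in a point the paper leaves tacit.
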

\noindent The above theorem and corollary show that $r_i$ should
ideally be such that $r_i-3=O(h_i^2)$. Later we shall consider how
$r_i$ can be chosen to preserve the data monotonicity/convexity,
whilst maintaining this optimal requirement.\\ Following the
convergence results for the classical rational cubic spline $s$
studied by  Gregory and Delbourgo, we assumed that the data
generating function $f$ is in class $\mathcal{C}^4(I)$. Now we
establish the uniform convergence of $s$ with a weaker assumption
$f\in$ $\mathcal{C}^1(I)$, and use it to deduce the uniform
convergence of rational cubic spline FIF $S$ as in the previous
case.
\begin{thm}\label{t1a}
Let $S$ and $s$, respectively, be the rational cubic spline FIF
and the classical rational cubic spline interpolant for the
original function $ f \in$ $\mathcal{C}^1(I)$ with respect to the
interpolation data $\{(x_i, y_i) : i = 1,2, \dots, N \}$. Suppose
that the rational function $R_i$  involved in the IFS generating
the FIF $S$ satisfies $ \big| \frac{\partial R_i(\tau_i,
\phi)}{\partial \alpha_i} \big| \le Z_0 $, $|\tau_i| \in (0,
\kappa a_i)$, for all $i \in J$, and for some real constant $Z_0$.
Then,
\begin{equation*}
\|f - S \|_{\infty} \le \frac{1}{4c} h |d|_\infty + \frac{1}{4c}
\omega(f;h) (|r|_\infty +4)    + \frac { |\alpha|_{\infty} ( \|s
\|_{\infty} + Z_0)}{ 1 -|\alpha|_{\infty}
 },
\end{equation*}
where $ c = \min\{ c_i : i \in J \}$, and $\omega(f;h):=
{\underset {|x - x^*| \le h}  \sup}\big \{ |f(x)-f(x^*)| : x, x^*
\in I \big\} $  is the modulus of continuity of $f$. In
particular, $S$ converges uniformly to $f\in$ $\mathcal{C}^1(I)$
as the mesh norm tends to zero.
\end{thm}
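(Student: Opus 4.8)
The plan is to follow the architecture of the proof of Theorem~\ref{t1}, keeping the fractal-versus-classical comparison estimate untouched and replacing only the classical approximation estimate by one requiring merely $f\in\mathcal{C}^1$. Starting from the triangle inequality~(\ref{r14a}), $\|f-S\|_\infty\le\|f-s\|_\infty+\|s-S\|_\infty$, I would first observe that the second summand is bounded exactly as before: the steps producing (\ref{r16}), (\ref{r17}) and hence (\ref{r20}) use only that $S$ and $s$ are the fixed points of the Read--Bajraktarevi\'{c} operator $T_\alpha$ for the scale vectors $\alpha\neq\mathbf{0}$ and $\alpha=\mathbf{0}$ respectively, together with the hypothesis $\bigl|\partial R_i/\partial\alpha_i\bigr|\le Z_0$; no smoothness of $f$ beyond the existence of the interpolant $s$ enters. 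Thus $\|s-S\|_\infty\le\dfrac{|\alpha|_\infty(\|s\|_\infty+Z_0)}{1-|\alpha|_\infty}$ carries over verbatim.

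The genuinely new ingredient is a $\mathcal{C}^1$ bound for $\|f-s\|_\infty$. Fix $i\in J$, put $\phi=(x-x_i)/h_i$ for $x\in[x_i,x_{i+1}]$, and rewrite the denominator of $s_i$ in~(\ref{r11a}) in the Bernstein-weighted form
\[
Q_i^*(\phi)=1+(r_i-3)\phi(1-\phi)=(1-\phi)^3+r_i\phi(1-\phi)^2+r_i\phi^2(1-\phi)+\phi^3,
\]
which follows from $(1-\phi)^3+\phi^3=1-3\phi(1-\phi)$. With $P_i^{**}$ the numerator in~(\ref{r11a}) and $y_i=f(x_i)$, $y_{i+1}=f(x_{i+1})$, the quantity $Q_i^*(\phi)f(x)-P_i^{**}(\phi)$ collects into the four terms $(f(x)-y_i)(1-\phi)^3$, $\bigl(r_i(f(x)-y_i)-h_id_i\bigr)\phi(1-\phi)^2$, $\bigl(r_i(f(x)-y_{i+1})+h_id_{i+1}\bigr)\phi^2(1-\phi)$ and $(f(x)-y_{i+1})\phi^3$. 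Since $|x-x_i|\le h_i$ and $|x-x_{i+1}|\le h_i$, every factor $|f(x)-y_i|$, $|f(x)-y_{i+1}|$ is at most $\omega(f;h_i)$, so after grouping $|Q_i^*(\phi)f(x)-P_i^{**}(\phi)|$ is dominated by $\omega(f;h_i)\bigl[(1-\phi)^3+\phi^3\bigr]+\bigl(|r_i|\omega(f;h_i)+h_i\max\{|d_i|,|d_{i+1}|\}\bigr)\bigl[\phi(1-\phi)^2+\phi^2(1-\phi)\bigr]$. Applying the elementary extremal bounds $(1-\phi)^3+\phi^3\le1$ and $\phi(1-\phi)^2+\phi^2(1-\phi)=\phi(1-\phi)\le\frac14$ on $[0,1]$, together with $Q_i^*(\phi)\ge c_i$ from~(\ref{r15}), division gives $|f(x)-s_i(x)|\le c_i^{-1}\bigl(\omega(f;h_i)+\tfrac14|r_i|\omega(f;h_i)+\tfrac14 h_i\max\{|d_i|,|d_{i+1}|\}\bigr)$; taking the supremum over $i\in J$ and $x\in I$ yields $\|f-s\|_\infty\le\frac{1}{4c}h|d|_\infty+\frac{1}{4c}\omega(f;h)(|r|_\infty+4)$.

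Combining this with (\ref{r20}) through (\ref{r14a}) gives the displayed inequality. For the ``in particular'' clause, I would use $|\alpha_i|\le\kappa a_i=\kappa h_i/(x_N-x_1)$, so $|\alpha|_\infty\le\kappa h/(x_N-x_1)\to0$ as the mesh norm $h\to0$, while $\|s\|_\infty$ and $Z_0$ remain bounded by~(\ref{r18})--(\ref{r19}) (the data $y_i=f(x_i)$ being bounded, the $r_i$ kept in a fixed range bounded away from $-1$, e.g.\ $r_i-3=O(h_i^2)$ as recommended after Corollary~\ref{t2}, so that $c$ is bounded below); moreover $\omega(f;h)\to0$ by uniform continuity of $f$ on the compact interval $I$, and $h|d|_\infty\to0$ since $|d|_\infty\le\|f^{(1)}\|_\infty$ when $d_i=f^{(1)}(x_i)$. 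Hence the right-hand side tends to $0$.

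The only place requiring genuine care is the algebraic regrouping of $Q_i^*f-P_i^{**}$ in the second paragraph: one must arrange the coefficients so that they reduce to first-order increments of $f$, and thus to $\omega(f;\cdot)$, rather than to derivatives of $f$, which are unavailable under the weakened hypothesis. This is precisely the low-regularity counterpart of the Taylor-type estimate of Delbourgo and Gregory, i.e.\ of the $\mathcal{C}^4$ bound~(\ref{r14}) used in Theorem~\ref{t1}.
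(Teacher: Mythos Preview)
Your proposal is correct and follows essentially the same approach as the paper: both proofs rewrite $Q_i^*(\phi)$ in the Bernstein-weighted form $(1-\phi)^3+r_i\phi(1-\phi)^2+r_i\phi^2(1-\phi)+\phi^3$, use this to regroup $Q_i^*(\phi)f(x)-P_i^{**}(\phi)$ into first-order increments bounded by the modulus of continuity, apply the elementary bounds $(1-\phi)^3+\phi^3\le1$ and $\phi(1-\phi)\le\tfrac14$ together with $Q_i^*\ge c_i$, and then combine the resulting $\|f-s\|_\infty$ estimate with the unchanged bound~(\ref{r20}) on $\|s-S\|_\infty$ via~(\ref{r14a}). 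Your additional discussion of the uniform-convergence clause (boundedness of $\|s\|_\infty$, $Z_0$, and the need to keep $r_i$ in a range bounded away from $-1$) is more explicit than the paper's, but the core argument is identical.
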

\begin{proof}
 Observe that
\begin{equation*}
\begin{split}
Q_i(\theta) &= 1+ (r_i-3) \theta (1-\theta),\\
&= (1-\theta)^3 + r_i \theta (1-\theta)^2 + r_i \theta^2
(1-\theta)+ \theta^3.
\end{split}
\end{equation*}
For $x \in [x_i, x_{i+1}]$,
\begin{equation*}
\begin{split}
f(x) - s(x)=& f(x)- \frac{P_i ^* (\theta)}{Q_i (\theta)}\\
=& \frac{1}{1+(r_i-3)\theta(1-\theta)}\Big[(1-\theta)^3
\big(f(x)-y_i\big) + r_i \theta (1-\theta)^2
\big(f(x)-y_i\big)+\\& r_i \theta^2
(1-\theta)\big(f(x)-y_{i+1}\big)+ \theta^3
\big(f(x)-y_{i+1}\big)-h_i d_i \theta (1-\theta)^2 + h_i d_{i+1}
\theta^2 (1-\theta)\Big].
\end{split}
\end{equation*}
Therefore, local error of the interpolation is given by
\begin{equation*}
\begin{split}
|f(x)-s(x)| \le & \frac{1}{c_i}\Big\{
|f(x)-y_i|[(1-\theta)^3+|r_i| \theta(1-\theta)^2]+ |f(x)
-y_{i+1}|[|r_i| \theta^2 (1-\theta) + \theta^3]\\ &+ h_i [|d_i|
\theta (1-\theta)^2 + |d_{i+1}| \theta^2
(1-\theta)]\Big\},\\
 \le & \frac{1}{c_i}\Big[(\frac{1}{4}|r_i| +1)\omega(f;h) +
\frac{1}{4} h_i \max \{|d_i|, |d_{i+1}|\}\Big],
\end{split}
\end{equation*}
Consequently,  we have the following uniform error bound for the
classical rational spline $s$:
\begin{equation} \label{r21a}
\begin{split}
\|f-s\|_\infty & \le \frac{1}{4c} h |d|_\infty + \frac{1}{4c}
\omega(f;h) (|r|_\infty +4).
\end{split}
\end{equation}
Now (\ref{r14a}) coupled with (\ref{r20}) and (\ref{r21a}) proves
the  theorem.
\end{proof}
\section{Shape Preserving Rational Fractal Interpolation}\label{r1ps4}
\setcounter{equation}{0}
\subsection{Monotonic Data}\label{r1ps41}
For the sake of simplicity, let us assume that the given data set
is monotonically increasing, i.e., $y_1 \le y_2 \le \dots \le
y_N$, and consequently $ \Delta_i = \frac{y_{i+1} - y_i}{h_i} \ge
0 ~\forall~ i\in J$. For a monotonic increasing interpolant $S\in
\mathcal{C}^1$, it is necessary that  the derivative parameters
satisfy $d_i \ge 0 , i = 1,2, \dots,N $. From elementary calculus,
we know that a differentiable function $S$ is monotonic increasing
on $I$ if and only if $S^{(1)}(x) \ge 0 $ for all $ x \in I$.
Calculation of $S^{(1)}(L_i(x))$ from (\ref{r10}) and further
simplification give: for $ x\in I$,
\begin{equation}\label{r22}
 S^{(1)}\big(L_i(x)\big) = \frac{\alpha_i}{a_i} S^{(1)}(x) + \frac{ T_i \theta^4 + S_i \theta^3 (1-\theta) + U_i  \theta^2 (1-\theta)^2 + V_i \theta (1-\theta)^3 + W_i (1-\theta)^4}{[ 1 + (r_i -3 ) \theta (1-\theta)]^2
 },
\end{equation}
\begin{equation*}
\begin{split}
\text{where}\;T_i & = d_{i+1} - \frac{\alpha_i}{h_i} (x_N - x_1) d_N ,\\
S_i & = 2(r_i \Delta_i - d_i) - \frac{2\alpha_i}{h_i} [r_i (y_N - y_1) - d_1 (x_N - x_1)] ,\\
U_i & = (r_i^2 + 3) \Delta_i  - r_i (d_i + d_{i+1}) - \frac{\alpha_i}{h_i} [  (r_i^2 + 3)  (y_N - y_1) - r_i  (x_N - x_1) (d_1 + d_N) ],\\
V_i & = 2(r_i \Delta_i - d_{i+1}) - \frac{2\alpha_i}{h_i} [r_i (y_N - y_1) - d_N (x_N - x_1)],\\
W_i & = d_i - \frac{\alpha_i}{h_i} (x_N - x_1) d_1.
\end{split}
\end{equation*}
Since the rational cubic spline FIF is defined implicitly and
recursively, to maintain the positivity of $S^{(1)}$ in the
successive iterations and to keep the desired data dependent
monotonicity condition to be simple enough, we assume $\alpha_i
\ge 0$ for all $i \in J$. Our predilection to the nonnegativity
assumption on the scaling factors is attributable to reasons of
convenience rather than of necessity.
 Then, for $i \in J$ and an arbitrary knot point $x_j$, sufficient
conditions for $S^{(1)}\big(L_i(x_j)\big)\ge 0$ are:
\begin{equation}
 T_i \ge 0 , S_i \ge 0, U_i \ge 0 , V_i \ge 0, W_i \ge
 0,
\end{equation}
where the necessary condition on the derivative parameters are assumed. \\
Now $ T_i \ge 0  \Leftrightarrow \frac{\alpha_i}{h_i} (x_N - x_1)
d_N \le d_{i+1}$. Observe that  if $d_N =0$, then $T_i \geq 0 $,
$i \in J$, follow  directly from the assumption on the derivative
parameters. Otherwise, we impose the following condition on the
scaling factors:
\begin{equation}\label{r24}
 \alpha_i \le \frac {d_{i+1} h_i} { d_N (x_N - x_1)}.
\end{equation}
 \noindent Similarly $ W_i \ge 0 $, whenever the scaling factor satisfies
\begin{equation}\label{r25}
 \alpha_i \le \frac {d_i h_i} { d_1 (x_N - x_1)}.
\end{equation}
Again
\begin{equation}\label{r26}\left.
\begin{split}
 S_i \ge 0 &\Leftrightarrow  h_i (r_i  \Delta_i - d_i ) \ge \alpha_i [ r_i (y_N - y_1)  -d_1  (x_N - x_1)], \\
 V_i \ge 0 & \Leftrightarrow h_i (r_i  \Delta_i - d_{i+1}) \ge \alpha_i [ r_i (y_N - y_1)  -d_N  (x_N -
 x_1)].
\end{split}\right\}
\end{equation}
If it is  assumed  that
\begin{equation}\label{r27}
 r_i [ h_i \Delta_i - \alpha_i (y_N - y_1)]\ge h_i (d_i + d_{i+1}) - \alpha_i (x_N - x_1) (d_1 +
 d_N),
\end{equation}
then it follows from (\ref{r24})-(\ref{r25}) that
\begin{equation*}
 \begin{split}
   r_i [ h_i \Delta_i - \alpha_i (y_N - y_1) ] & \ge h_i  d_i  - \alpha_i d_1 (x_N - x_1) \ge 0, \\
   r_i [ h_i \Delta_i - \alpha_i (y_N - y_1) ] & \ge h_i d_{i+1} - \alpha_i d_N  (x_N - x_1) \ge 0.
 \end{split}
\end{equation*}
In view of (\ref{r26}), the above inequalities  imply  $S_i \ge 0
$ and $V_i \ge 0$.\\ Assume that $ h_i \Delta_i - \alpha_i
(y_N-y_1) \ge 0$, i.e.,
\begin{equation}\label{r29}
 \alpha_i \le \frac{ h_i \Delta_i}{y_N - y_1}.
\end{equation}
From (\ref{r27}), we have
\begin{equation}\label{r28}
 r_i^2 [ h_i \Delta_i - \alpha_i (y_N - y_1) ]   \ge r_i h_i (d_i + d_{i+1}) - r_i \alpha_i (x_N - x_1) (d_1 +
 d_N).
\end{equation}
From (\ref{r28}) and $ 3 h_i  \Delta_i \ge 3 \alpha_i (y_N -
y_1)$, it is easy to verify that $U_i \ge 0$. From (\ref{r24}),
(\ref{r25}) and (\ref{r29}), for a monotonicity preserving
rational FIF, it suffices to choose $\alpha_i, i \in J$, according
to:
\begin{equation}\label{r30}
 0 \le \alpha_i \le \min \Big \{ \kappa a_i,  \frac {d_i h_i} { d_1 (x_N - x_1)},    \frac {d_{i+1} h_i} { d_N (x_N - x_1)},   \frac{ \Delta_i h_i }{y_N - y_1}    \Big
 \}.
\end{equation}
Here the first term in the braces arises due to the
$\mathcal{C}^1$-continuity of $S$. After choosing the scaling
factor $\alpha_i$ according to (\ref{r30}), the shape parameters
$r_i$ is selected to fulfill inequality (\ref{r27}), and these
conditions are sufficient for $S^{(1)}\big(L_i(x_j)\big)\ge 0$. As
$[x_1, x_N]$ is the attractor of the IFS $\{\mathbb{R};L_i(x) ,
i\in J \}$, by the recursive nature of the rational fractal
function, $S^{(1)} \big(L_i(x_j)\big) \ge 0 $ for all $i \in J $
and for every knot point $x_j$ imply that  $ S^{(1)}(x) \ge 0 $
for all $ x \in I.$
\par
\noindent With the necessary condition $d_i \le 0, i \in J $,
assumed to hold, an analogous procedure applies for a monotonic
decreasing data set.
\begin{rem} \label{rcon}
  If $\Delta_i = 0$, then we take $\alpha_i =0$ for the monotonicity
 of the rational cubic spline  FIF. Also in this case, $d_i= d_{i+1}=0$. Consequently, $S\big(L_i(x)\big) = y_i
 = y_{i+1}$ ,i.e., to say that  $S$ reduces to a constant on the
 interval $[x_i, x_{i+1}]$.
\end{rem}
\begin{rem} \label{GDM}
When all $\alpha_i = 0$, the rational cubic FIF reduces to the
classical rational cubic spline $s$. In this case,  condition
(\ref{r30}) is obviously true, and the condition (\ref{r27})
reduces to
\begin{equation}\label{r31}
 r_i   \ge  (d_i + d_{i+1})/   \Delta_i,\; i \in J.
\end{equation}
 Thus (\ref{r31}) is a sufficient condition for the monotonicity
of the classical rational cubic spline $s$ [\cite{DG4}, p. 970].
\end{rem}
\begin{rem}
 For a given  strictly  monotonic data,  we select $\alpha_i$ satisfying  (\ref{r30}) with $\alpha_i < \frac{h_i
\Delta_i}{y_N-y_1}$,  and then fix the shape parameters according
to
\begin{equation}\label{30}
r_i =  1 + \frac{h_i(d_i + d_{i+1}) - \alpha_i (x_N - x_1) (d_1 +
d_N)}{h_i \Delta_i - \alpha_i (y_N - y_1)},
\end{equation} so  that the monotonicity condition (\ref{r27}) is satisfied. With these choices of the IFS parameters, the rational cubic spline FIF reduces to the  rational quadratic FIF:\\
\begin{equation}\label{r30a}
S\big(L_i(x)\big) = \alpha_i S(x)  + \frac{
P_i(\theta)}{Q_i(\theta) },
\end{equation}
where $ P_i(\theta) =  (y_{i+1} - \alpha_i y_N) \Delta_i \theta^2 +   \beta_{i}\Big \{ (y_i d_{i+1}+ y_{i+1}d_i)a_i  -  \alpha_i \big [y_{i+1}d_1 +y_i d_N+a_i (y_N d_i+ y_1 d_{i+1})\big] $\\
 $+  \alpha_{i}^{2} (y_N d_1+ y_1 d_N) \Big\} \theta (1-\theta) + (y_i -\alpha_i y_1)  \Delta_i (1-\theta)^{2}$,\\
$Q_i(\theta) = \Delta_i \theta^2 + \beta_i \big[a_i  (d_i + d_{i+1}) - \alpha_i  (d_1 + d_N)\big] \theta (1-\theta) + \Delta_i (1-  \theta)^2$, and \\
 $\beta_{i}=\dfrac{\Delta_i (x_N-x_1)} { (y_{i+1}-y_i )-\alpha_i (y_N-y_1)}$.
 For $\Delta_i =0$, we choose $\alpha_i =0$, and define $S(L_i(x))= y_i= y_{i+1}$.
 If all $\alpha_i = 0$  in (\ref {r30a}), then the corresponding  rational quadratic FIF reduces to
the classical monotonic rational quadratic interpolant studied in
detail in \cite{DG1}. In this degenerated case, the necessary
condition $d_i\geq 0$ is also sufficient for
 the monotonicity of the interpolant (see \cite{DG1}).
\end{rem}
\begin{rem}\label{remb}
For the shape parameters specified in (\ref{30}) and $|\alpha_i| <
a_i^4$, we get $r_i-3 = O(h_i^2)$. Consequently, from corollary
3.1 it follows that (\ref{30}) is a good choice of $r_i$ since the
optimal $O(h^4)$ bound on the interpolation error can be achieved.
 Hence, for a monotonic rational cubic spline FIF with an optimal error bound,
 we choose
 $0 \le \alpha_i \le \min \Big \{  \kappa
a_i^4, \frac {d_i a_i}{d_1}, \frac {d_{i+1} a_i} { d_N }, \kappa^*
\frac{ \Delta_i h_i }{y_N - y_1} \Big \}$,  $\kappa, \kappa^* \in
[0,1),$ and $r_i$ as in (\ref{30}).
\end{rem}
\noindent The entire discussion on the monotonic rational cubic
spline FIFs can be encapsulated in the following theorem:
\begin{thm} \label {t2}
 For a given set of monotonic data $\{(x_i, y_i) :
 i=1,2,\dots, N\}$, let $S$ be the rational cubic spline FIF
 described  in (\ref{r10}). Assume that the necessary conditions
 on the derivative parameters are satisfied. Then, the following
 conditions on the scaling factors and the shape parameters on each subinterval are
 sufficient for $S$ to be monotonic on  $I= [x_1, x_N]:$
$$0 \le \alpha_i \le \min \Big \{  \kappa a_i,  \frac {d_i
a_i}{d_1}, \frac {d_{i+1} a_i} { d_N },  \kappa^* \frac{ \Delta_i
h_i }{y_N - y_1} \Big \};\;  \kappa, \kappa^* \in [0,1),$$
$$ r_i \geq \frac{h_i(d_i + d_{i+1}) - \alpha_i (x_N - x_1) (d_1 + d_N)}{h_i \Delta_i - \alpha_i (y_N - y_1)}.$$
In particular, if $r_i =  1 + \dfrac{h_i(d_i + d_{i+1}) - \alpha_i
(x_N - x_1) (d_1 + d_N)}{h_i \Delta_i - \alpha_i (y_N - y_1)},$
then $S$ reduces to the rational quadratic spline FIF given in
(\ref {r30a}), and in this case the above mentioned conditions on
$\alpha_i$ alone are sufficient for the rational FIF to be
monotonic.
\end{thm}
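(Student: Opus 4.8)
The plan is to observe that this theorem merely repackages the analysis carried out in Section~\ref{r1ps41}, and to assemble that analysis into a single argument. Under the standing restriction $|\alpha_i|\le\kappa a_i$ (the first entry of the $\min$), Proposition~\ref{P2} guarantees $S\in\mathcal{C}^1(I)$, so ``$S$ monotonic increasing on $I$'' is equivalent to ``$S^{(1)}(x)\ge 0$ for all $x\in I$''. The first step is to use the self-referential formula~(\ref{r22}): it writes $S^{(1)}\bigl(L_i(x)\bigr)$ as $\frac{\alpha_i}{a_i}S^{(1)}(x)$ plus a rational function whose denominator $[1+(r_i-3)\theta(1-\theta)]^2$ is strictly positive and whose numerator is a nonnegative linear combination of $\theta^4,\ \theta^3(1-\theta),\ \theta^2(1-\theta)^2,\ \theta(1-\theta)^3,\ (1-\theta)^4$ with coefficients $T_i,S_i,U_i,V_i,W_i$. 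Hence, once $\alpha_i\ge 0$ and the five inequalities $T_i,S_i,U_i,V_i,W_i\ge 0$ are in hand, an induction on the ``level'' of the iterated knot points $L_{i_1}\!\circ\cdots\circ L_{i_k}(x_j)$ gives $S^{(1)}\ge 0$ at all of them: the base case is $S^{(1)}(x_j)=d_j\ge 0$ (the assumed necessary condition on the derivative parameters), the inductive step is immediate from~(\ref{r22}), and since $[x_1,x_N]$ is the attractor of $\{\mathbb{R};L_i,\,i\in J\}$ these iterated knots are dense in $I$, so continuity of $S^{(1)}$ finishes it. (Equivalently, the nonnegative cone $\{g\in\mathcal{F}^*:g\ge 0\}$ is nonempty, closed, and $T^*$-invariant, hence contains the fixed point $S^{(1)}$.) Thus the whole problem reduces to the five coefficient inequalities.

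Next I would verify those inequalities from the hypotheses, using $a_i=h_i/(x_N-x_1)$ to match the bounds. The bound $\alpha_i\le d_ia_i/d_1$ is condition~(\ref{r25}) and yields $W_i\ge 0$; $\alpha_i\le d_{i+1}a_i/d_N$ is~(\ref{r24}) and yields $T_i\ge 0$ (when $d_1=0$ or $d_N=0$ the corresponding inequality is immediate from $d_j\ge 0$, as noted after~(\ref{r25})); and $\alpha_i\le\kappa^*\Delta_ih_i/(y_N-y_1)$ with $\kappa^*<1$ gives~(\ref{r29}) \emph{strictly}, i.e. $h_i\Delta_i-\alpha_i(y_N-y_1)>0$. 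Summing the two inequalities $\alpha_i(x_N-x_1)d_1\le h_id_i$ and $\alpha_i(x_N-x_1)d_N\le h_id_{i+1}$ provided by $W_i,T_i\ge 0$ shows $h_i(d_i+d_{i+1})-\alpha_i(x_N-x_1)(d_1+d_N)\ge 0$, so the prescribed lower bound for $r_i$ is nonnegative and therefore $r_i\ge 0$. Now the hypothesis on $r_i$ is precisely~(\ref{r27}); combining~(\ref{r27}) with $W_i,T_i\ge 0$ yields the two inequalities of~(\ref{r26}), namely $S_i\ge 0$ and $V_i\ge 0$; and multiplying~(\ref{r27}) by $r_i\ge 0$ gives~(\ref{r28}), after which adding $3\bigl[h_i\Delta_i-\alpha_i(y_N-y_1)\bigr]\ge 0$ produces $h_iU_i=(r_i^2+3)\bigl[h_i\Delta_i-\alpha_i(y_N-y_1)\bigr]-r_i\bigl[h_i(d_i+d_{i+1})-\alpha_i(x_N-x_1)(d_1+d_N)\bigr]\ge 0$. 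This closes the monotonicity part. On a subinterval with $\Delta_i=0$ one takes $\alpha_i=0$, which forces $d_i=d_{i+1}=0$ and makes $S$ constant on $I_i$ (Remark~\ref{rcon}); the monotone-decreasing case is handled by the symmetric argument.

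For the ``in particular'' assertion I would substitute $r_i=1+\dfrac{h_i(d_i+d_{i+1})-\alpha_i(x_N-x_1)(d_1+d_N)}{h_i\Delta_i-\alpha_i(y_N-y_1)}$: this value exceeds the required lower bound for $r_i$ by exactly $1>0$, so the monotonicity just proved applies under the $\alpha_i$-conditions alone; and, as computed in the remark preceding~(\ref{r30a}), this particular $r_i$ collapses the cubic numerator and quadratic denominator of~(\ref{r10}) to the rational quadratic FIF~(\ref{r30a}), so $S$ coincides with~(\ref{r30a}) and inherits its monotonicity. The only genuinely delicate point is establishing $r_i\ge 0$ before the steps that multiply~(\ref{r27}) by $r_i$ and by $r_i^2+3$; this is not assumed outright but is forced by $W_i,T_i\ge 0$ through the summation argument above, and it is exactly here that the strict inequality $\kappa^*<1$ is needed, so that $h_i\Delta_i-\alpha_i(y_N-y_1)>0$ and the defining fraction for $r_i$ is well posed. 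The degenerate cases $\Delta_i=0$, $d_1=0$, $d_N=0$ must also be quarantined by hand, but beyond that the computation is entirely routine.
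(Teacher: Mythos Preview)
Your proposal is correct and follows essentially the same route as the paper: reduce monotonicity to $S^{(1)}\ge 0$, use the self-referential formula~(\ref{r22}) with $\alpha_i\ge 0$ to make the problem equivalent to the five coefficient inequalities $T_i,S_i,U_i,V_i,W_i\ge 0$, derive those from the stated bounds on $\alpha_i$ and $r_i$ exactly as in (\ref{r24})--(\ref{r29}), and then invoke the attractor property of $\{L_i\}$ (density of iterated knots plus continuity of $S^{(1)}$, or equivalently $T^*$-invariance of the nonnegative cone) to pass from knot points to all of $I$. Your write-up is in one respect more careful than the paper's own argument: you explicitly establish $r_i\ge 0$ by summing the inequalities coming from $W_i,T_i\ge 0$ \emph{before} multiplying~(\ref{r27}) by $r_i$ to get~(\ref{r28}), whereas the paper makes that passage without checking the sign of $r_i$.
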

\subsection{Convex Data}\label{r1ps42}
We assume a strictly convex set of data so that:
\begin{equation}\label{cc}
 \Delta_1 < \Delta_2 < \dots < \Delta_{N-1}.
\end{equation}
To  have a convex interpolant $S$ and to avoid the possibility of
$S$ having straight line segments, it is necessary that  the
derivatives  at knot points satisfy
\begin{equation}\label{cdc}
 d_1 < \Delta_1 < d_2 < \Delta_2 < \dots < d_i < \Delta_i < \dots <
 d_N.
\end{equation}
For a concave data set  inequality will be reversed. Let $s$ be
the classical counterpart of $S$ studied elaborately  in
\cite{DG4}. Since $s$ may fail to have second derivative at knot
points, $s$ is not twice differentiable on the entire interval
$I$. Hence, in contrast to the claim  made in \cite{DG4},
convexity of $s$ cannot be derived from the result that reads:
\emph{$s(x)$ is convex if and only if $s^{(2)}(x) \ge 0$ for all
$x \in I$}. However, the following result from elementary calculus
justify the procedure adapted in \cite{DG4}: \emph{Suppose that
$f$ is piecewise $\mathcal{C}^2$ with increasing slopes, i.e.,
there is a subdivision $a=x_0<x_1<\dots<x_k=b$ of $I=[a,b]$ such
that (i) $f$ is continuous on $I$ (ii) $f$ is of class
$\mathcal{C}^2$ on each subinterval $(x_{i-1},x_i)$, $i=1,2,\dots,
k$ (iii) $f$ has one-sided derivative at $x_1,x_2,\dots,x_{k-1}$
satisfying $f(x_i^-) \le f(x_i^+)$ for $i=1,2,\dots, k-1$, then
$f$ is convex
on $I$}.\\
Now turning our attention to the rational cubic spline FIF $S$, it
is worth mentioning that due to the fractality, $S$ may not be
even piecewise $\mathcal{C}^2$. Consequently, we cannot adapt the
convexity procedure for the classical cubic spline $s$ by applying
the result stated above. Instead, we use the following results:\\
(i) \emph{A differentiable function of one variable is convex on
an interval if and only if its derivative is monotonically
increasing on that interval.}\\ (ii) \emph{Let $f$ be a continuous
function on $[a,b]$. If for each $x \in (a,b)$ one of the one
sided derivatives $f^{(1)}(x^+)$ or $f^{(1)}(x^-)$ exists and is
nonnegative (possibly $+\infty$), then $f$ is monotonic increasing
on $[a,b]$.}\\
Owing to these results, to establish the convexity of $S \in
\mathcal{C}^1(I)$, it is enough to show that $S^{(2)}(x^+)$
or $S^{(2)}(x^-)$ exists and is nonnegative for each $x \in (x_1,x_N)$. It is to this that we now turn.\\
Informally, \begin{equation}\label{r32}
 S^{(2)}\big(L_i(x)\big) = \frac{\alpha_i}{a_i^2} S^{(2)}{(x)} +
 R_i^{(2)}(x),
 \end{equation}
 \begin{equation*} \text{where}\; R_i^{(2)}(x)= \frac{ 2 [ A_i^* \theta^3 +
B_i^* \theta^2 (1-\theta) + C_i^* \theta (1-\theta)^2 + D_i^*
(1-\theta)^3 ]}{h_i [ 1 + (r_i -3 ) \theta (1-\theta)]^3 },
\end{equation*}
\begin{equation*}\label{r33}\left.
\begin{split}
A_i^* & = r_i (d_{i+1} - \Delta_i) + d_i - d_{i+1} - \frac{\alpha_i}{h_i} \big\{ r_i [ d_N (x_N - x_1) - (y_N - y_1)] + (x_N - x_1) (d_1 - d_N) \big \} ,\\
B_i^* & =  3 ( d_{i+1} -\Delta_i) - \frac{3\alpha_i}{h_i}    [ d_N (x_N - x_1) - (y_N - y_1)]    , \\
C_i^* & =  3 ( \Delta_i - d_i)   - \frac{3 \alpha_i}{h_i}  [ (y_N - y_1) - d_1 (x_N - x_1) ]    , \\
D_i^* & =  r_i (\Delta_i - d_i ) + d_i - d_{i+1} - \frac{\alpha_i}{h_i} \big\{ r_i [   (y_N - y_1)-d_1 (x_N - x_1) ] + (x_N - x_1) (d_1 - d_N) \big \} .\\
\end{split} \right\}
\end{equation*}
We assume that $0 \le \alpha_i \le \kappa a_i^2 $ for $i \in J$,
where $0 \le \kappa < 1$. Using the fact that
$L_j:[x_1,x_N]\rightarrow [x_j, x_{j+1}]$ satisfies
$L_j(x_1)=x_j$, $L_j(x_N)=x_{j+1}$, for $j \in J$ we get:
\begin{equation}\label{r34a}\left.
\begin{split}
S^{(2)}(x_1^+) &= \frac{2 D_1^*}{h_1} \big [1-
\frac{\alpha_1}{a_1^2} \big]^{-1},\\
S^{(2)}(x_j ^+) &= \frac{\alpha_j}{a_j^2} S^{(2)} (x_1^+) +
\frac{2 D_j^*}{h_j};\; j=2,3, \dots, N-1,\\
 S^{(2)}(x_N^-) &=
\frac{2 A_{N-1}^*}{h_{N-1}} \big [1-
\frac{\alpha_{N-1}}{a_{N-1}^2} \big]^{-1}.
\end{split}\right\}
\end{equation}
From (\ref{r34a}), it follows that the second derivative
(right-hand) at the knot points $x_j$, $j \in J,$ and the second
derivative (left-hand) at the extreme end point $x_N$ is
nonnegative if: $D_j^*$, $j \in J$, and $A_{N-1}^*$ are
nonnegative. For a typical knot point $x_j$, $j \in J:$
\begin{equation}\label{r34b}
S^{(2)}\big(L_i(x_j)^+\big)= \frac{\alpha_i}{a_i^2}S^{(2)}(x_j^+)
+ R_i^{(2)}(x_j^+)
\end{equation}
Assuming $D_j^*$, $j \in J$, to be nonnegative, (\ref{r34b})
suggests that $S^{(2)}\big(L_i(x_j)^+\big) \ge 0$ is satisfied,
provided $R_i^{(2)}(x_j^+) \ge 0$. Again, $R_i^{(2)}(x_j^+) \ge 0$
is satisfied if:
$$ A_i ^* \ge 0 , B_i^* \ge 0,  C_i^* \ge 0,\text{and}~  D_i^* \ge 0;\; i \in J. $$
From the Three Chords Lemma for convex functions \cite{LB}, it follows that a convex set of data should necessarily satisfy $ d_1 \le \frac{y_N - y_1}{x_N - x_1} \le d_N$, where inequalities remain strict for strict convexity.\\
Now $  B_i^* \ge 0  \Leftrightarrow d_{i+1} - \Delta_i  \ge
\frac{\alpha_i}{h_i}    [ d_N (x_N - x_1) - (y_N - y_1)]$.
Observing that  if $d_N(x_N-x_1)- (y_N-y_1)=0$, then $B_i^* \ge 0
$ is obviously satisfied, we get the condition on the scaling
factor as
\begin{equation*}\label{r34}
 \alpha_i \le \frac{h_i (d_{i+1} - \Delta_i)}{d_N (x_N - x_1) - (y_N - y_1)}.
\end{equation*}
Similarly, $ C_i^* \ge 0  \Leftrightarrow \Delta_i - d_i \ge \frac{\alpha_i}{h_i}  [ (y_N - y_1) - d_1 (x_N - x_1) ] $ gives
\begin{equation*}\label{r35}
 \alpha_i \le \frac{h_i ( \Delta_i - d_i )}{ (y_N - y_1) -d_1 (x_N - x_1)}.
\end{equation*}
Therefore, to obtain $S^{(2)} (L_i(x_j)^+) \ge 0$ for all $i \in
J$ and knot points $x_j, j\in J$, it suffices to have  $ A_i^* \ge
0, D_i^* \ge 0$, and
\begin{equation}\label{r36}
 0 \le \alpha_i  \le \min \Big \{ \kappa a_i ^2, \frac{h_i (d_{i+1} - \Delta_i)}{d_N (x_N - x_1) - (y_N - y_1)},    \frac{h_i ( \Delta_i - d_i )}{ (y_N - y_1) -d_1 (x_N - x_1) } \Big \}.
\end{equation} The following conditions on the shape parameter $r_i$ give  $  A_i^* \ge 0, D_i^* \ge 0$.
{\scriptsize \begin{equation*}
 r_i \ge \max \Big\{ \frac{d_{i+1}- d_i + (\alpha_i/h_i)   (x_N - x_1)  (d_1 - d_N)} { d_{i+1} - \Delta_i - (\alpha_i/h_i) [ d_N (x_N - x_1) - (y_N - y_1)]},
\frac {d_{i+1}- d_i + (\alpha_i/h_i)   (x_N - x_1)  (d_1 - d_N)} {
\Delta_i - d_i -(\alpha_i/h_i) [ (y_N - y_1) - d_1 (x_N -
x_1)]}\Big \}
\end{equation*}}
The condition on $r_i$ stated above is equivalent to
\begin{equation}\label{r37}
 r_i \ge  1 + \frac{M_i}{m_i},
\end{equation}
where
$$ M_i = \max \{   d_{i+1} - \Delta_i -\frac{\alpha_i}{h_i} [ d_N (x_N - x_1) - (y_N - y_1)] ,  \Delta_i - d_i  -\frac{\alpha_i}{h_i}  [ (y_N - y_1) - d_1 (x_N - x_1)] \},$$
$$m_i = \min  \{ d_{i+1} - \Delta_i -\frac{\alpha_i}{h_i} [ d_N (x_N - x_1) - (y_N - y_1)] ,  \Delta_i - d_i  -\frac{\alpha_i}{h_i}  [(y_N - y_1) - d_1 (x_N - x_1)] \}.$$
Therefore the  conditions (\ref{r36}) on the scaling factors and
(\ref{r37}) on the shape parameters ensure
$S^{(2)}\big(L_i(x_j)^+\big) \ge 0$ for all $i, j \in J$ and
$S^{(2)}(x_n^-) \ge 0$. Since the rational fractal function is
generated recursively and $[x_1, x_N]$ is the attractor of the IFS
$\{R; L_i(x): i \in J\}$ $S^{(2)}\big(L_i(x_j)^+\big) \ge 0$ for
all $i, j \in J$ yield $S^{(2)}(x^+) \ge0$ for all $x \in
(x_1,x_N)$. Hence, by the result quoted at the beginning of this
section $S^{(1)}$ is monotonically increasing, as a consequence of
which $S$ is convex. Analogous procedure applies to a concave data
set.
\begin{rem}
If $ \Delta_i = \Delta_{i+1}$, then for a convex fractal
interpolant, we choose the scaling factor $\alpha_i$ to be zero.
Also, $d_i = d_{i+1}= \Delta_i$. Thus, in this case the rational
cubic FIF becomes $S(x) = \frac{(x_{i+1}-x)y_{i}
+(x-x_i)y_{i+1}}{x_{i+1}-x_i}$, i.e., to say that $S$ reduces to a
straight line segment on $[x_i, x_{i+1}]$, as would be expected.
\end{rem}
\begin{rem}
When all $\alpha_i = 0$, the condition (\ref{r36}) is obviously
true and the condition (\ref{r37}) reduces to
\begin{equation}\label{r38b}
\begin{split}
 r_i  & \ge  1+ \frac{M_i^*}{m_i^*}\quad \text{with}\\
 M_i^* &=\max\{d_{i+1}-\Delta_i, \Delta_i - d_i\},
 m_i^* =\min\{d_{i+1}-\Delta_i,\Delta_i - d_i\}.
 \end{split}
\end{equation}
This provides sufficient conditions for the convexity of the
classical rational cubic spline [ \cite{DG4}, p. 971].
\end{rem}
\begin{rem}\label{rema}
In particular,  if we choose
\begin{equation}\label{38}
 r_i =  1 +\frac{M_i}{m_i}+\frac{m_i}{M_i},
\end{equation}
 with $\alpha_i$ satisfying $0 \le \alpha_i < \min \Big \{  a_i ^2, \frac{h_i (d_{i+1} - \Delta_i)}{d_N (x_N - x_1) - (y_N - y_1)},    \frac{h_i ( \Delta_i - d_i )}{ (y_N - y_1) -d_1 (x_N - x_1) } \Big \}$
  so as to settle the convexity in question, then the rational cubic spline FIF $S$ in
(\ref{r10}) reduces to a lower-order form given by
 \begin{equation}\label{r38a}
S\big(L_i(x)\big) = \alpha_i S(x)  + (1-\theta)(y_i-\alpha_i y_1)+
\theta (y_{i+1}-\alpha_i y_N)-\dfrac{h_i \theta(1-\theta) G_i
H_i}{G_i (1-\theta)+ H_i\theta},
\end{equation}
\begin{equation*}
\begin{split}
\text{where}\; G_i &:=  (d_{i+1} - \Delta_i ) - \frac{ \alpha_i}{h_i} [d_N(x_N-x_1) - (y_N - y_1)]\quad \text{and} \\
H_i&:=(\Delta_i-d_i)-\frac{\alpha_i}{h_i}[(y_N-y_1)-d_1(x_N-x_1)].\\
\end{split}
\end{equation*}
The classical counterpart of (\ref{r38a}) obtained by choosing all
the scaling factors to be zero is described in \cite{D2}. In other
words, (\ref{r38a}) yields a fractal generalization of the
classical rational spline with quadratic numerator and linear
denominator
studied in \cite{D2}.\\
Our particular choice of shape parameters  given in (\ref{38})
verifying the convexity condition can be justified as follows. For
the shape parameters as in (\ref{38}), and the scaling factors
satisfying $|\alpha_i| < a_i ^4$, we have $r_i-3=O(h_i^2)$, and
consequently we obtain optimal $O(h^4)$ bound on interpolation
error provided derivatives are estimated with $O(h_i^3)$ accuracy.
\end{rem}
\noindent The main points in the above discussion are extracted in
the form of following theorem:
\begin{thm}\label{C1}
Given a convex (concave) data $\{(x_i, y_i) : i = 1,2,\dots,N\}$,
assume that the derivative parameters satisfy the necessary
convexity (concavity) condition. Then, the following conditions on
the scaling factors and the shape parameters are sufficient for
the corresponding $\mathcal{C}^1$-rational cubic spline FIF $S$ to
be convex (concave) on $I= [x_1, x_N]$. \small
\begin{equation*}
\begin{split}
 & 0 \le \alpha_i   \le \min \Big \{\kappa a_i ^2,\kappa^* \frac{h_i (d_{i+1}
- \Delta_i)}{d_N (x_N - x_1) - (y_N - y_1)}, \kappa_* \frac{h_i (
\Delta_i - d_i )}{ (y_N - y_1) -d_1 (x_N - x_1) } \Big \};
\kappa,\kappa^*, \kappa_* \in [0,1),\\
&   r_i \ge  \max \Bigg  \{ \frac{d_{i+1}- d_i +
(\alpha_i/h_i)(x_N - x_1)  (d_1 - d_N)} { d_{i+1} - \Delta_i -
(\alpha_i/h_i) [ d_N (x_N - x_1) - (y_N - y_1)]},
      \frac {d_{i+1}- d_i + (\alpha_i/h_i)   (x_N - x_1)  (d_1 - d_N)} {\Delta_i - d_i -(\alpha_i/h_i) [y_N - y_1 - d_1 (x_N - x_1)]}\Bigg \}.
\end{split}
\end{equation*}
\normalsize
\end{thm}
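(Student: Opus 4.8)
The plan is to deduce convexity of $S$ from the two elementary facts recalled just above the statement: a differentiable function is convex iff its derivative is nondecreasing, and a continuous function one of whose one-sided derivatives exists and is nonnegative (possibly $+\infty$) at each interior point is nondecreasing. This detour is forced by fractality — $S$ need not be even piecewise $\mathcal{C}^2$, so no criterion phrased via an everywhere-defined $S^{(2)}$ is available. Hence it suffices to show that for every $x\in(x_1,x_N)$ some one-sided second derivative of $S$ at $x$ exists and is nonnegative; then $S^{(1)}$ is nondecreasing and $S$ is convex.

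First I would differentiate the functional equation (\ref{r10}) twice to obtain the self-referential relation (\ref{r32}), $S^{(2)}(L_i(x))=\tfrac{\alpha_i}{a_i^2}S^{(2)}(x)+R_i^{(2)}(x)$, with $R_i^{(2)}$ as displayed after (\ref{r32}): its numerator is a Bernstein-type combination of $A_i^*,B_i^*,C_i^*,D_i^*$ with nonnegative weights, and its denominator $h_i[1+(r_i-3)\theta(1-\theta)]^3$ is positive since $r_i>-1$ gives $1+(r_i-3)\theta(1-\theta)\ge c_i>0$. The assumption $0\le\alpha_i\le\kappa a_i^2$ with $\kappa<1$ keeps $\alpha_i/a_i^2<1$, which is exactly what lets the recursion pin down the relevant one-sided values of $S^{(2)}$: solving (\ref{r32}) at $x_1$ and $x_N$ and using $x_j=L_j(x_1)$ gives the closed forms (\ref{r34a}) for $S^{(2)}(x_1^+)$, $S^{(2)}(x_j^+)$ at the interior knots, and $S^{(2)}(x_N^-)$, from which these knot values are nonnegative as soon as $D_j^*\ge0$ for $j\in J$ and $A_{N-1}^*\ge0$.

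Next, for a knot $x_j$ and any $i\in J$, relation (\ref{r34b}) reads $S^{(2)}(L_i(x_j)^+)=\tfrac{\alpha_i}{a_i^2}S^{(2)}(x_j^+)+R_i^{(2)}(x_j^+)$, which — granting $S^{(2)}(x_j^+)\ge0$ from the previous step and $\alpha_i\ge0$ — is nonnegative provided $R_i^{(2)}(x_j^+)\ge0$; and since $\theta,1-\theta\ge0$ this holds for all $i$ once $A_i^*,B_i^*,C_i^*,D_i^*\ge0$. It then remains to convert these four sign conditions into the stated hypotheses. Using the Three Chords Lemma to obtain $d_N(x_N-x_1)-(y_N-y_1)\ge0$ and $(y_N-y_1)-d_1(x_N-x_1)\ge0$, the inequalities $B_i^*\ge0$ and $C_i^*\ge0$ become precisely the upper bounds on $\alpha_i$ in (\ref{r36}); and writing the common numerator of $A_i^*$ and $D_i^*$ as $G_i+H_i$ with $G_i,H_i>0$ the quantities of Remark \ref{rema}, the inequalities $A_i^*\ge0$, $D_i^*\ge0$ become $r_i\ge(G_i+H_i)/G_i$ and $r_i\ge(G_i+H_i)/H_i$, i.e. the displayed lower bound on $r_i$ (equivalently $r_i\ge1+M_i/m_i$). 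Finally, since $[x_1,x_N]$ is the attractor of $\{x\mapsto L_i(x):i\in J\}$, the orbit $\bigcup_{k\ge0}W^k(E)$ of the finite knot set $E$, with $W(A)=\bigcup_iL_i(A)$, is dense in $[x_1,x_N]$; because $R_i^{(2)}$ is in fact nonnegative on all of $[x_i,x_{i+1}]$ (not just at the knots), the implication "one-sided $S^{(2)}\ge0$ at $x$ $\Rightarrow$ one-sided $S^{(2)}\ge0$ at $L_i(x)$" propagates the sign condition along the orbit, and a limiting argument extends it to every point of $(x_1,x_N)$. The monotonicity criterion then makes $S^{(1)}$ nondecreasing and $S$ convex; the concave case follows by reversing every inequality.

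The hard part will not be any one computation — the extremum estimates passing from the signs of $A_i^*,B_i^*,C_i^*,D_i^*$ to that of $R_i^{(2)}$ are routine — but the rigor surrounding the absent second derivative: giving (\ref{r32}) a precise meaning in terms of one-sided derivatives, checking throughout that the denominators stay $\ge c>0$ and $\alpha_i/a_i^2<1$ so the self-referential iteration is well posed, and, above all, upgrading "one-sided $S^{(2)}\ge0$ on the dense orbit of the knots" to what the monotonicity criterion actually needs, namely that at \emph{every} $x\in(x_1,x_N)$ some one-sided second derivative of $S$ exists and is nonnegative.
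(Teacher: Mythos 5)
Your proposal follows essentially the same route as the paper's own argument: the same two calculus facts reduce convexity to nonnegativity of a one-sided $S^{(2)}$, the functional equation (\ref{r32}) and the knot formulas (\ref{r34a})--(\ref{r34b}) reduce that to $A_i^*,B_i^*,C_i^*,D_i^*\ge 0$, the Three Chords Lemma turns $B_i^*\ge0$, $C_i^*\ge0$ into the bounds (\ref{r36}) on $\alpha_i$ and $A_i^*\ge0$, $D_i^*\ge0$ into the stated lower bound on $r_i$ (your $G_i,H_i$ rewriting, with numerator $G_i+H_i$, is exactly equivalent to the paper's $r_i\ge 1+M_i/m_i$), and the final propagation to all of $(x_1,x_N)$ is done via the attractor/recursion argument just as in the paper. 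The density-plus-limiting step you flag as the delicate point is treated no more rigorously in the paper itself, so your proposal matches its proof in both substance and level of detail.
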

\subsection{Convex and Monotonic Data}\label{r1ps43}
We now consider the possibility that the data satisfy both the
monotonic increasing condition $ y_1 < y_2< \dots <y_N$, and  the
strictly convex condition (\ref{cc}). The derivative parameters
must then satisfy the following inequalities:
\begin{equation}\label{r39}
 0 \le d_1 < \Delta_1 < d_2 < \Delta_2 < \dots < \Delta_{i-1}< d_i < \Delta_i < \dots < d_N.
\end{equation}
\noindent We claim that the convex interpolation method described
in the previous subsection is suitable for obtaining a convex and
monotonic fractal interpolant. To verify this claim, we proceed as
follows. Assume that the sufficient conditions (\ref{r36}) on the
scaling factors that achieve the convexity  of the rational cubic
FIF hold. Rearrangement  of these inequalities gives
$$ 0 \le  \alpha_i < \frac{h_i (d_{i+1} - \Delta_i)}{d_N (x_N - x_1) - (y_N - y_1)}
\implies  \Delta_i  - \frac{\alpha_i}{h_i} (y_N - y_1) < d_{i+1} -
\frac{\alpha_i}{h_i} d_N (x_N - x_1)$$ and
$$ 0 \le  \alpha_i <  \frac{h_i ( \Delta_i - d_i )}{ (y_N - y_1) -d_1 (x_N - x_1)}
\implies  d_i -  \frac{\alpha_i}{h_i} d_1 (x_N - x_1) < \Delta_i -
\frac{\alpha_i}{h_i} (y_N - y_1).$$ Combining  two inequalities
obtained above, we have
\begin{equation}\label{r40}
 d_i -  \frac{\alpha_i}{h_i} d_1 (x_N - x_1) < \Delta_i  - \frac{\alpha_i}{h_i} (y_N - y_1) < d_{i+1} -  \frac{\alpha_i}{h_i} d_N (x_N - x_1)
\end{equation}
Since $a_i < 1 $, the condition  $\alpha_i  \le  \kappa a_i^2$
given in (\ref{r36}) implies the condition
 $\alpha_i \le \kappa a_i$ in  (\ref{r30}). Also
\begin{equation}\label{r40a}
\alpha_i \le \kappa a_i, d_i \ge 0 ~\forall~ i \in J
\Longrightarrow \ \alpha_i d_1 \le \kappa \frac{h_i}{x_N-x_1}d_1
\Longrightarrow d_i -\frac{\alpha_i}{h_i} d_1 (x_N-x_1) \ge d_i
-\kappa d_1
\end{equation}
Hence from (\ref{r39}) and (\ref{r40a}), we have  $ d_i -
\frac{\alpha_i}{h_i} d_1 (x_N - x_1) \ge  0$. Consequently,
(\ref{r40}) yield $ \Delta_i - \frac{\alpha_i}{h_i} (y_N - y_1)
\ge 0 $ and $ d_{i+1} - \frac{\alpha_i}{h_i} d_N (x_N - x_1) \ge 0
$. Thus, we get the sufficient condition on the scale
factors $\alpha_i (i = 1,2,\dots, N-1)$ that retain the data monotonicity. \\
Assume that the sufficient condition (\ref{r37})  on the shape
parameters $r_i$  for  the convexity of the  rational cubic FIF is
true. We will prove that, this condition implies the condition
(\ref{r27}) on $r_i$ for the monotonicity of the rational cubic
fractal interpolant. Without loss of generality, assume that
$$ M_i = [d_{i+1} - \frac{\alpha_i}{h_i} d_N (x_N - x_1) ]- [\Delta_i - \frac{\alpha_i}{h_i} (y_N - y_1)],$$
$$m_i = [\Delta_i - \frac{\alpha_i}{h_i} (y_N - y_1)] - [d_i - \frac{\alpha_i}{h_i} d_1 (x_N - x_1) ].$$
Denote $P_i^* = d_i - \frac{\alpha_i}{h_i} d_1 (x_N - x_1), Q_i^*
= \Delta_i - \frac{\alpha_i}{h_i} (y_N - y_1), R_i^* = d_{i+1} -
\frac{\alpha_i}{h_i} d_N (x_N - x_1)$.  From (\ref{r40}), we have
$ P_i^* \le Q_i^* \le R_i^* $. Again, with these notations
\begin{equation}\label{r42}
 M_i = R_i^* - Q_i^* = \max \{ R_i^* - Q_i^*,  Q_i^* - P_i^* \} \implies Q_i^* \le \frac{P_i^*+R_i^*}{2}.
\end{equation}
The sufficient condition (\ref{r27}) for the monotonicity of a
rational cubic FIF can be rearranged as
\begin{equation}\label{r43}
r_i \ge \frac{d_i + d_{i+1} - \frac{\alpha_i}{h_i}(d_1+d_N)(x_N -
x_1)}{\Delta_i - \frac{\alpha_i}{h_i} (y_N - y_1)} = \frac{P_i^* +
R_i^*}{Q_i^*}.
\end{equation}
The sufficient condition  (\ref{r37}) for the convexity of a
rational cubic FIF  in above notations becomes
\begin{equation}\label{r44}
r_i \ge 1 + \frac{R_i^* - Q_i^*}{Q_i^* - P_i^*} = \frac{R_i^* -
P_i^*}{Q_i^* - P_i^*}.
\end{equation}
Note that (\ref{r44}) implies (\ref{r43}), if $ \frac{R_i^* -
P_i^*}{Q_i^* - P_i^*} \ge \frac{P_i^* + R_i^*}{Q_i^*}$ which is
equivalent to the condition described in (\ref{r42}). But the
condition (\ref{r42}) is obviously true due to our assumptions.
The proof is similar if we assume that $M_i = Q_i^* - P_i^* $ and
$m_i = R_i^* - Q_i^*$. Thus, we have proved the sufficient
condition for the convexity of a rational cubic FIF on shape
parameters $r_i$ gives the sufficient condition on  for the
monotonicity $r_i$. Therefore we conclude that for a given
monotonic increasing convex data set, if derivative parameters are
chosen according to (\ref{r39}), then convex interpolation scheme
developed in Section 4.2 will automatically produce a convex
monotone rational cubic spline FIF.
\begin{thm}\label{CM1}
Given a set of strictly  convex monotonic increasing data $\{(x_i,
y_i) : i = 1,2,\dots,N\}$, assume that the derivative parameters
satisfy the necessary condition expressed in (\ref{r39}). Then a
convex interpolant obtained through the convexity preserving
rational FIF scheme in  Theorem (\ref{C1}) will automatically
render a convex and monotone fractal interpolation curve.
\end{thm}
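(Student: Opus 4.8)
The plan is to show that the hypotheses of Theorem~\ref{C1} are strictly stronger than those required for the monotonicity preserving scheme of Section~\ref{r1ps41}, so that the rational cubic spline FIF $S$ produced by the convexity scheme is simultaneously convex and monotone increasing. Thus nothing new has to be constructed: I only need to verify two implications between the parameter restrictions, after which the conclusion is immediate.

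First I would treat the scaling factors. From the convexity bounds (\ref{r36}) and $a_i<1$ one gets $\alpha_i\le\kappa a_i^2\le\kappa a_i$, which is the first constraint in (\ref{r30}). Rearranging the other two bounds in (\ref{r36}) gives the chain (\ref{r40}),
\begin{equation*}
 d_i-\frac{\alpha_i}{h_i}d_1(x_N-x_1)<\Delta_i-\frac{\alpha_i}{h_i}(y_N-y_1)<d_{i+1}-\frac{\alpha_i}{h_i}d_N(x_N-x_1).
\end{equation*}
Using the necessary condition (\ref{r39}) and $\alpha_i\le\kappa a_i$ as in (\ref{r40a}), the leftmost term is $\ge d_i-\kappa d_1\ge0$; the chain then forces the middle and right terms to be nonnegative as well, and these nonnegativities are precisely the remaining bounds $\alpha_i\le\frac{d_ih_i}{d_1(x_N-x_1)}$, $\alpha_i\le\frac{d_{i+1}h_i}{d_N(x_N-x_1)}$, $\alpha_i\le\frac{\Delta_ih_i}{y_N-y_1}$ of (\ref{r30}).

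Next I would show the convexity condition (\ref{r37}) on $r_i$ implies the monotonicity condition (\ref{r27}). Writing $P_i^*=d_i-\frac{\alpha_i}{h_i}d_1(x_N-x_1)$, $Q_i^*=\Delta_i-\frac{\alpha_i}{h_i}(y_N-y_1)$, $R_i^*=d_{i+1}-\frac{\alpha_i}{h_i}d_N(x_N-x_1)$, the chain (\ref{r40}) becomes $0\le P_i^*<Q_i^*<R_i^*$. In the representative case $M_i=R_i^*-Q_i^*$, $m_i=Q_i^*-P_i^*$, the maximality of $M_i$ forces $Q_i^*\le\frac12(P_i^*+R_i^*)$, which is (\ref{r42}). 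Since (\ref{r27}) reads $r_i\ge(P_i^*+R_i^*)/Q_i^*$ and (\ref{r37}) reads $r_i\ge(R_i^*-P_i^*)/(Q_i^*-P_i^*)$, it is enough to check $(R_i^*-P_i^*)/(Q_i^*-P_i^*)\ge(P_i^*+R_i^*)/Q_i^*$; multiplying through by the positive quantity $Q_i^*(Q_i^*-P_i^*)$ reduces this to $P_i^*(P_i^*+R_i^*-2Q_i^*)\ge0$, which holds because $P_i^*\ge0$ and $P_i^*+R_i^*-2Q_i^*\ge0$ by (\ref{r42}). The symmetric case $M_i=Q_i^*-P_i^*$, $m_i=R_i^*-Q_i^*$ is handled identically with the roles of $P_i^*$ and $R_i^*$ interchanged. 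Hence $S$ also meets the hypotheses of the monotonicity theorem of Section~\ref{r1ps41}, so $S$ is monotone increasing; combined with Theorem~\ref{C1} this yields the claim.

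The argument is essentially bookkeeping, so I do not expect a serious obstacle; the only care needed is for degenerate subintervals where $\Delta_i=\Delta_{i+1}$ (forcing $\alpha_i=0$ and $S$ affine on $[x_i,x_{i+1}]$, where both properties are trivial) and for ensuring the denominators $Q_i^*-P_i^*$ and $Q_i^*$ are strictly positive, which is guaranteed by the strict inequalities in (\ref{r39}) and (\ref{r40}).
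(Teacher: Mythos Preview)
Your proposal is correct and follows essentially the same route as the paper: first showing that the convexity bounds (\ref{r36}) on the $\alpha_i$ imply the monotonicity bounds (\ref{r30}) via the chain (\ref{r40}) and the estimate (\ref{r40a}), then introducing the abbreviations $P_i^*,Q_i^*,R_i^*$ and checking, in the two cases for $M_i,m_i$, that (\ref{r37}) forces (\ref{r27}). The only difference is cosmetic: you carry out the cross-multiplication explicitly to reach $P_i^*(P_i^*+R_i^*-2Q_i^*)\ge0$, whereas the paper simply asserts the equivalence with (\ref{r42}); also note that under the strict convexity hypothesis the degenerate case $\Delta_i=\Delta_{i+1}$ you flag does not actually occur.
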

\section{Some Remarks and Possible Extensions}\label{r1ps5}
\begin{enumerate}[(i)]
\item \emph{Preserving Positivity}: The proposed rational FIF can
also generate positive fractal curves for a given set of positive
data $\{(x_i,y_i):i=1,2,\dots, N\}$. Recall that the rational
cubic spline FIF $S$ is generated iteratively using the functional
equation $S\big(L_i(.)\big)=\alpha_i S(.) +R_i(.)$. Hence with
$\alpha_i \ge 0$ for all $i \in J$, the conditions $R_i(x) \ge 0$
for all $x \in I$ and for all $i \in J$ is enough to ensure $S(x)
\ge 0$ for all $x \in I$. Since $R_i$ has positive denominator,
the positivity of $R_i(x)$ reduces to the positivity of cubic
polynomial $P_i(x)$. Computationally efficient sufficient
conditions for the positivity of $P_i(x)=P_i^*(\theta)$ is given
by $A_i \ge 0$, $B_i \ge 0$, $C_i \ge 0$, and $D_i \ge 0$. With
simple calculations we obtain the following conditions on the IFS
parameters: $0 \le \alpha_i < \min \Big \{ a_i, \frac {y_i }{y_1},
\frac {y_{i+1}} { y_N } \Big \}$ and $r_i> \max\{-1,
\frac{-h_id_i+\alpha_i
d_1(x_N-x_1)}{y_i-\alpha_iy_1},\frac{h_id_{i+1}-\alpha_i
d_N(x_N-x_1)}{y_{i+1}-\alpha_iy_N}\}$. In particular, for all
$\alpha_i=0$, we obtain conditions for the positivity of the
rational cubic spline introduced in \cite{DG4}. It seems that
\cite{DG4} does not address the possibility of preserving
positivity with the rational cubic spline developed therein.
 \item \emph{Admissibility of negative
scalings for shape preserving}: By taking monotonicity as an
example of shape, we illustrate that the nonnegativity assumption
on the scaling parameters is not essential in our shape preserving
schemes. For this purpose, we outline a slightly general problem,
namely, identifying  the parameters of the IFS so that the graph
of $S^{(1)}$ lies in a prescribed rectangle $R=I \times [0,M]$.
Recall that $S^{(1)}$ is generated using the IFS
$\big\{\big(L_i(x), F_{i,1}(x,y)\big)\big\}$ where $F_{i,1}(x,y)=
\frac{\alpha_i y + R_i^{(1)}(x)}{a_i}.$ By the properties of the
attractor of the IFS, for $0 \le S^{(1)}(x)\le M$ it suffices to
prove that $F_{i,1}(x,y) \in [0, M]$ for any $(x,y) \in R$.
Consider the two cases: (i) $0 \le \alpha_i < a_i$ (ii) $ -a_i<
\alpha_i <0$. With $0 \le \alpha_i < a_i$, $0 \le y \le M$, the
condition $F_{i,1}(x,y) \in [0, M]$ holds if $0 \le R_i^{(1)}(x)
\le M(a_i-\alpha_i)$. Now by the substitution of the rational
expression $R_i^{(1)}(x)$, the above inequality can be transformed
to the positivity of suitable quartic polynomials. This provides
conditions on IFS parameters for case (i). For case (ii), the
condition  $F_{i,1}(x,y) \in [0, M]$  will hold if $ -\alpha_i M
\le R_i^{(1)}(x) \le a_i M$. Again, this can be transformed to the
positivity of suitable quartic polynomials. From this the
conditions on the IFS parameters for case (ii) are deduced.
Combining the conditions in both cases, we obtain sufficient
conditions on the IFS parameters for the graph of $S^{(1)}$ to lie
in $R$. Taking $M$ to be a large positive number, we deduce
conditions for the monotonicity of $S$. Note that this allows
negative values for the scaling parameters. \item
\emph{Co-monotone/co-convex fractal interpolants}: Often a data
set will not be globally monotone, but instead switches back and
forth between monotone increasing and monotone decreasing. We need
the interpolant to follow  the shape of the data in the following
sense: $S$ is monotonically increasing/decreasing on $[x_r, x_s]$
if the data are monotonically increasing/decreasing on $[x_r,
x_s]$. Similarly we can define co-convex interpolation problem. If
co-monotone/co-convex interpolation with the present fractal
scheme is desired, a preliminary subdivision of the points into
subsets of uniform shape is needed. Let us illustrate this with an
example. Consider a data set $\{(x_i, y_i) : i=1,2,\dots, 7\}$
where $y_1 \le y_2 \le y_3$;\; $y_3 \ge y_4 \ge y_5 \ge y_6$; \;
$y_6 \le y_7$. In order to achieve co-monotonicity, we must insure
that slopes at transition points are zero, i.e., $d_3=0$ and
$d_6=0$. We divide the interval $I=[x_1,x_7]$ into three
subintervals such that the data possess same type of monotonicity
property throughout that subinterval; $I_1=[x_1,x_3]$,
$I_2=[x_3,x_6]$, $I_3=[x_6,x_7]$. We can apply the developed
monotonicity preserving  FIF algorithm to obtain a monotonically
increasing rational cubic spline FIF $S_1$ on $I_1$. With proper
renaming of the  data points if necessary, the monotonically
decreasing FIF algorithm can be applied to obtain a rational cubic
spline FIF $S_2$ on $I_2$. Now consider the interval
$I_3=[x_6,x_7]$ which contains only two data points. Here the
iterations of IFS code cannot produce any new points. To remedy
this problem, we introduce a new node say, $(x_6^*,y_6^*)$ that is
consistent with the shape present in $[x_6,x_7]$, i.e., $x_6
<x_6^*<x_7$ and $y_6 \le y_6^* \le y_7$. The ``best'' choice of
additional node  deserves further research. We apply the developed
monotonically increasing FIF scheme with an arbitrary but shape
consistent extra node to obtain a cubic spline FIF $S_3$, which is
co-monotone with the data in $I_3$. Construct a rational cubic
spline FIF $S$ in a piecewise manner by defining $S|_{I_i}=S_i$.
Note that the Hermite interpolation conditions on $S_i$ provide
the $\mathcal{C}^1$-continuity for $S$. \item \emph{Optimal choice
of parameters}: Without a doubt, the scaling parameters and the
shape parameters together provide a large flexibility in the
choice of an interpolant. Consequently, a natural question on an
``optimal'' choice of the parameter arises. In this regard, some
remarks are in order. For higher irregularity in the derivative
(quantified in terms of the fractal dimension) we have to choose
large values of the scaling factors. In contrast, for localness of
the scheme small values of $\alpha_i$ are preferred.  A preferable
choice of the scaling factors and the shape parameters for a
monotonicity/convexity preserving rational cubic spline FIF in
terms of optimal interpolation error is given in Remarks
(\ref{remb}),(\ref{rema}). Among the various shape preserving
rational fractal interpolants a visually improved solution may be
obtained by minimization of a fairness functional such as Holladay
functional. The feasible domain is given by suitable restriction
on the IFS parameters. This results in a constrained nonlinear
optimization problem. In the classical non-recursive shape
preserving schemes, widely used Holladay functional is
$\int\limits_{ x_1}^{x_N}\big[ S^{(2)}(x)\big]^2 \,\mathrm{d}x.$
However, for the present fractal scheme, $S^{(2)}$ may have
discontinuity at each point of the interval $I=[x_1,x_N]$, and
consequently computing the integral occurring in this Holladay
functional would be impossible at least in the Riemann sense. If
we interpret the second derivative occurring in the Holladay
functional loosely as $S^{(2)}(x^+)$ or assume that the FIF is in
$\mathcal{C}^2$, then
\begin{equation*}
\begin{split}
M&=\int\limits_{I}\big[ S^{(2)}(x)\big]^2 \,\mathrm{d}x =\underset
{i \in J}\sum \int\limits_{I_i}\Big[ \frac{\alpha_i}{a_i^2}
S^{(2)}(L_i^{-1}(x)) +
\frac{1}{a_i^2}R_i^{(2)}(L_i^{-1}(x))\Big]\,\mathrm{d}x,\\
&= \Big[\underset{i \in J}\sum
\frac{\alpha_i}{a_i^2}\int\limits_{I_i}S^{(2)}(L_i^{-1}(x))\,\mathrm{d}x\Big]+
R_0,
\end{split}
\end{equation*}
where $R_0=\int\limits_{I}R^*(x)\,\mathrm{d}x$ and
$R^*(x)=\frac{1}{a_i^2}R_i^{(2)}(L_i^{-1}(x))$, if $x \in I_i$.
Applying the change of variable $\tilde{x}=L_i^{-1}(x)$,
\begin{equation*}
M = \underset {i \in J}\sum \frac{\alpha_i}{a_i} M + R_0
\Longrightarrow M = \frac{R_0}{1-\underset {i \in J}\sum
\frac{\alpha_i}{a_i}}.
\end{equation*}
Thus, the constrained optimization problem is to Minimize $M$
where variables are restricted according to finite set of
inequalities resulting from the shape preserving constraints. It
is felt that this constrained optimization problem can be solved
by means of a differential evolution optimization
algorithm/genetic algorithm. This procedure is justified if we
make the interpolant to be $\mathcal{C}^2$ by imposing suitable
conditions on the derivative values and the IFS parameters
resulting from the $\mathcal{C}^2$-continuity conditions. This may
be done on lines similar to the cubic spline FIFs (see \cite {CK},
\cite{CV}). This will lead to the fractal generalization of
the standard $\mathcal{C}^2$-rational cubic spline introduced by Gregory \cite{G}. \\
From the point of view of approximation theory, the problem of
finding optimal rational spline FIF $S$ is an inverse problem
which reads as: Given a set of values of a function, recover the
IFS parameters generating this target function. Levkovich
\cite{LM} has obtained contraction affine mappings generating a
given function based on the connection between the maxima skeleton
of wavelet transform of the function and positions of the fixed
points of the affine mappings in question. It is not without
interest to note that for adapting a similar technique, the
connection between the strongest singularities of the FIF or its
derivative and fixed points of the generating mappings, which is
non- affine in the present case, is to be developed rigourously.
Lutton et al. \cite{LL} have applied genetic algorithms for
solving this type of inverse problems. Resolution of the inverse
problem is a major challenge of both theoretical and practical
interest, which is not settled in its full generality.
\item{\emph{A comparison of the present method with subdivision
methods}:} A possible alternative to the present recursive shape
preserving interpolant schemes that introduce fractality in the
derivatives is so-called shape preserving subdivision schemes
(see, for instance, \cite{CC, DL, TM}). Now we briefly compare and
contrast the two methodologies. In both these interpolation
methods, the desired interpolant is obtained constructively.
Convergence of the fractal interpolation scheme and the
differentiability of the limit function follow from a straight
forward application of the Banach contraction principle on a
suitable function space. Establishing the convergence of the
scheme and the differentiability of the limit function is
relatively harder in the subdivision schemes. Though the
subdivision schemes add fractality to the derivative function,  we
cannot directly control this fractality in terms of the parameters
involved in the scheme. On the other hand, it is known \cite{B}
that as magnitudes of the scaling factors are increased from zero,
the dimension of the derivative of the fractal spline increases.
By controlling the scaling factors, the fractality can be
considered in a small portion of the domain, if in this part
possible signal displays some complex disturbance. A quantitative
measure of the irregularity (fractality), namely, box
counting/Hausdorff dimension of the fractal curves in terms of the
scaling parameters involved in the IFS is obtained in
\cite{DD2,B}. Up to our knowledge, a quantitative measure of the
irregularity of the derivative in terms of  the parameters
involved is unavailable in subdivision schemes. Using the notions
of hidden variable FIFs and coalescence hidden variable FIFs the
present scheme can be extended to preserve shape of the data
generated from a self-affine, non-self-affine, or partly
self-affine and partly non-self-affine function. However,
subdivision schemes do not specify about these properties of the
constructed interpolants. The main appeal to the subdivision
schemes resides in their localness. Due to the recursive and
implicit nature of the FIF, the proposed scheme is, in general,
non-local. However, the completely local classical non-recursive
interpolation scheme emerges as a special case of the proposed
fractal interpolation method, and consequently, our method  is
local or global depending on the magnitude of the scaling factor
in each subinterval.
\end{enumerate}
\section{Numerical Examples}\label{r1ps6}
Iterating the functional equation (cf. (\ref{r10}))  with suitable
choices of the scaling factors and the shape parameters as
prescribed in Section \ref{r1ps4}, we generate different shape
preserving rational cubic spline FIFs in this section.\\ If the
derivatives $d_i$ ($1,2,\dots,N$) are not supplied, estimates of
derivatives are necessary. Methods that associate derivatives with
data points involve estimates based on nearby slopes or data
differences. Depending on the applications, various schemes based
on linear combination (e.g., arithmetic mean method) or
multiplicative combination (e.g., geometric mean method) of
chord-slopes are developed in the literature (see, for instance,
\cite{BFK,DG5}). With the notation $ \Delta_i = \frac{y_{i+1} -
y_i}{h_i}, i \in J$, the three point difference approximation for
the arithmetic mean method is given by $ d_i = \frac{h_i
\Delta_{i-1} + h_{i-1} \Delta_i } {h_{i-1}+ h_i}, \quad  i
=2,3,\dots, N-1$ with end conditions $ d_1 = \Big (1 +
\frac{h_1}{h_2} \Big ) \Delta_1 -  \frac{h_1}{h_2} \Delta_{3,1},
\; \Delta_{3,1} = \frac{y_3 - y_1}{x_3 -x_1},$ $ d_N= \Big (1 +
\frac{h_{N-1}}{h_{N-2}} \Big ) \Delta_{N-1} -
\frac{h_{N-1}}{h_{N-2}} \Delta_{N,N-2}, \; \Delta_{N,N-2} =
\frac{y_N - y_{N-2}}{x_N -x_{N-2}}.$
\subsection{Monotonicity Preserving Rational FIFs}
Consider a monotonic data set  $\{(x_i,y_i,d_i)=(0,0,
1.3333),(2,4,2.6666),(3,7,2.6190),(9,9,\\1.5833),(11,13,2.4166)\}$.
For monotonic FIFs, the computed bounds on the scaling factors
are: $0\leq\alpha_1<0.1818,\; 0\leq\alpha_2<0.0985,\;
0\leq\alpha_3<0.1538$,\;
$0\leq\alpha_4<0.1818$.\\
Since $S$ is obtained by iterating the functional equation
(\ref{r10}), perturbation of a particular scaling factor
$\alpha_i$ and/or  shape parameter $r_i$  may ripple through the
entire configuration, i.e., interpolant is potentially non-local.
However, we observed that the portions of the interpolating curve
pertaining to other subintervals are not extremely sensitive
towards changes in the parameters of a particular subinterval. To
illustrate this, we take the monotonic rational cubic spline FIF
in Fig. 1(a) as a reference curve, and analyze the effect of
perturbing the parameters of a particular portion of this curve.
Values of the parameters (rounded off to four decimal places)
corresponding to various curves that are calculated according to
the prescription in Theorem \ref{t2} are given in Table \ref{T1}.
Changing the scaling parameter $\alpha_1$ to $0.05$ (see Table
\ref{T1}), we obtain Fig. 1(b). It is observed that the
perturbation in $\alpha_1$ effects the rational fractal
interpolant considerably in the interval $[x_1, x_2]$, and there
are  no perceptible changes in other subintervals. Similarly, Fig.
1(c) and Fig. 1(d) are obtained by changing the scaling factor
$\alpha_2$ and the shape parameter $r_3$ with respect to the
reference curve. Effects of these changes are observed to be
local. By taking zero scalings in each subinterval, we recapture a
standard rational cubic spline due to Delbourgo and Gregory
\cite{DG4} (see Remarks \ref{rc}, \ref{GDM}) in Fig. 1(e). Optimal
choices of the shape parameters suggested by Remark \ref{remb} are
used to generate
the rational quadratic FIF in Fig. 1(f).  \\
\begin{table}[!h]
\caption{Parameters corresponding to the Monotonic rational cubic
FIFs \label{T1}} \centering
\begin{tabular}{|l |c|  rrrr|}
\hline Figure No &  \multicolumn{5}{c|}{Choice of parameters}
\\ [1ex]
\hline
&$\alpha_i$ & $0.18$ & $0.09$ & $0.1$ & $0.18$ \\[-1ex]
\raisebox{1.5ex}{Fig. 1(a)} &$r_i$ & $2$& $1.8$ & $31$ & $0.5$
\\
\hline
&$\alpha_i$ & $0.05$ & $0.09$ & $0.1$ &$0.18$ \\[-1ex]
\raisebox{1.5ex}{Fig. 1(b)} &$r_i$ & $2$& $1.8$& $31$ &$0.5$
\\
\hline
&$\alpha_i$ & $0.18$ & $0$ & $0.1$ & $0.18$ \\[-1ex]
\raisebox{1.5ex}{Fig. 1(c)}&$r_i$ & $2$& $1.8$ & $31$ & $0.5$
\\
\hline
&$\alpha_i$ &$0.18$& $0.09$& $0.1$ & $0.18$\\[-1ex]
\raisebox{1.5ex}{Fig. 1(d)} & $r_i$
&$2$ & $1.8$ & $350$ & $0.5$  \\
\hline
&$\alpha_i$& $0$& $0$ & $0$ & $0$\\[-1ex]
\raisebox{1.5ex}{Fig. 1(e)}& $r_i$ &$2$ & $1.8$ & $31$ &$0.5$\\[1ex]
\hline
&$\alpha_i$ & $0.001$  & $0$ & $0.08$ &$0.001$ \\[-1ex]
\raisebox{1.5ex}{Fig. 1(f)}& $r_i$ &$2.9961$  & $2.7619$ &
$23.8269$ &$2.9961$\\ \hline
\end{tabular}
\label{tab:PPer}
\end{table}
Let us denote the monotonic rational cubic spline FIFs in Figs.
1(a)-1(f) by $S_i$, $i=1,2,\dots, 6$.  Using the functional
equation (\ref{r22}), the derivative functions  $S_i^{(1)}$
($i=1,2,\dots, 6$) are generated in Figs. 2(a)-2(f). These curves
possess varying irregularity. The derivative $S_5^{(1)}$ of the
classical rational cubic spline is smooth where as $S_1^{(1)}$ is
nowhere differentiable. Note that $S_3^{(1)}$ has smoothness in
the subinterval $[x_2,x_3]=[2,3]$ where the scaling factor is
chosen to be $0$. In this way, the fractality of $S_i^{(1)}$ can
be restricted in a portion of the domain. It can be noted that due
to the small values of the scaling factors in each interval
$S_6^{(1)}$ is almost smooth. The fractal dimension of $S^{(1)}$
constitutes a numerical characterization of the geometry of the
signal and may be used as an index for measuring the complexity of
the underlying phenomenon (see, for instance,\cite{NS2}).
\begin{figure}[h!]
\begin{center}
\begin{minipage}{0.3\textwidth}
\epsfig{file=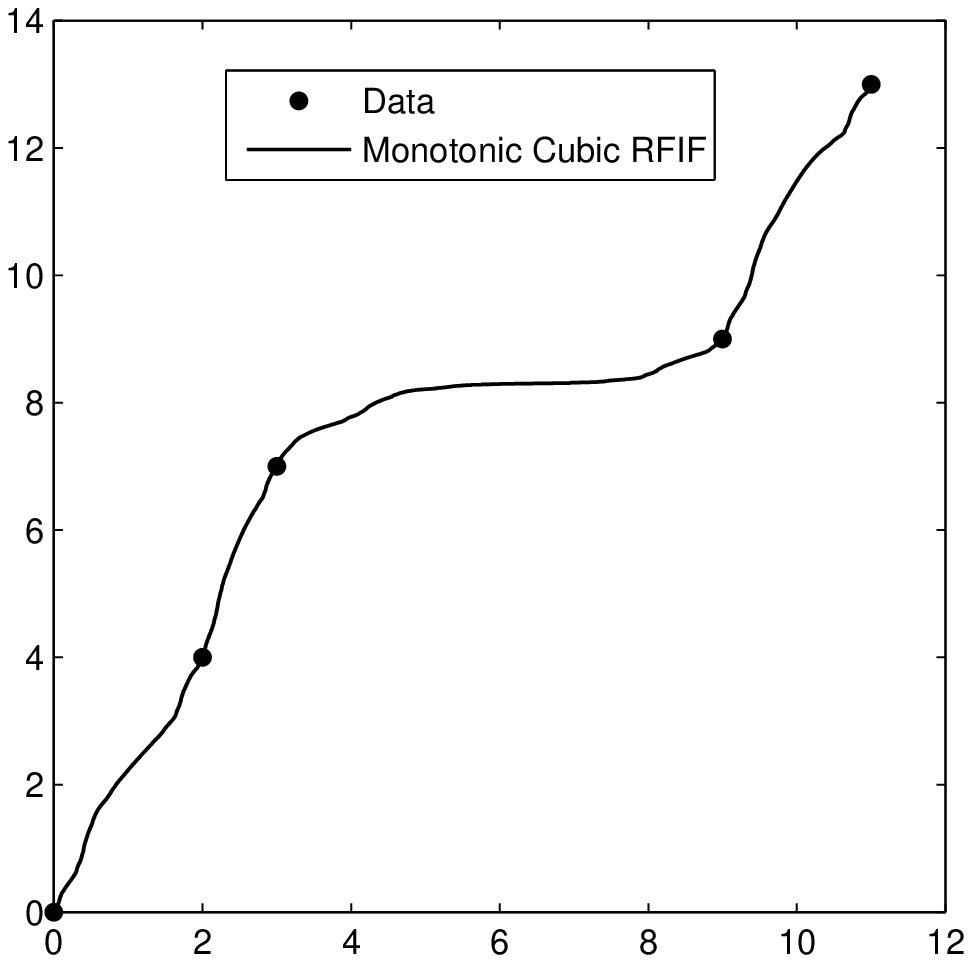,scale=0.38} \centering{\scriptsize{(a):
Monotonic rational cubic FIF $S_1$}}
\end{minipage}\hfill
\begin{minipage}{0.3\textwidth}
\epsfig{file=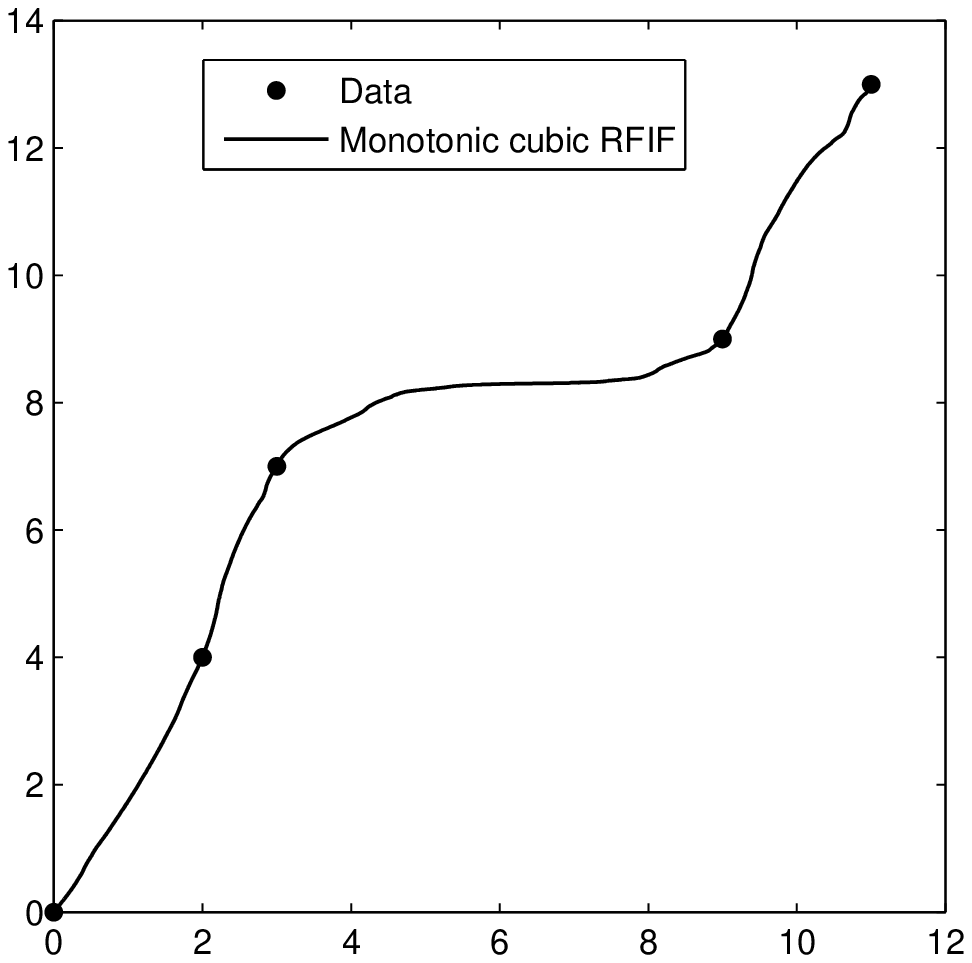,scale=0.38} \centering{\scriptsize{(b):
Monotonic rational cubic FIF $S_2$ (effect of $\alpha_1$)}}
\end{minipage}\hfill
\begin{minipage}{0.3\textwidth}
\epsfig{file = 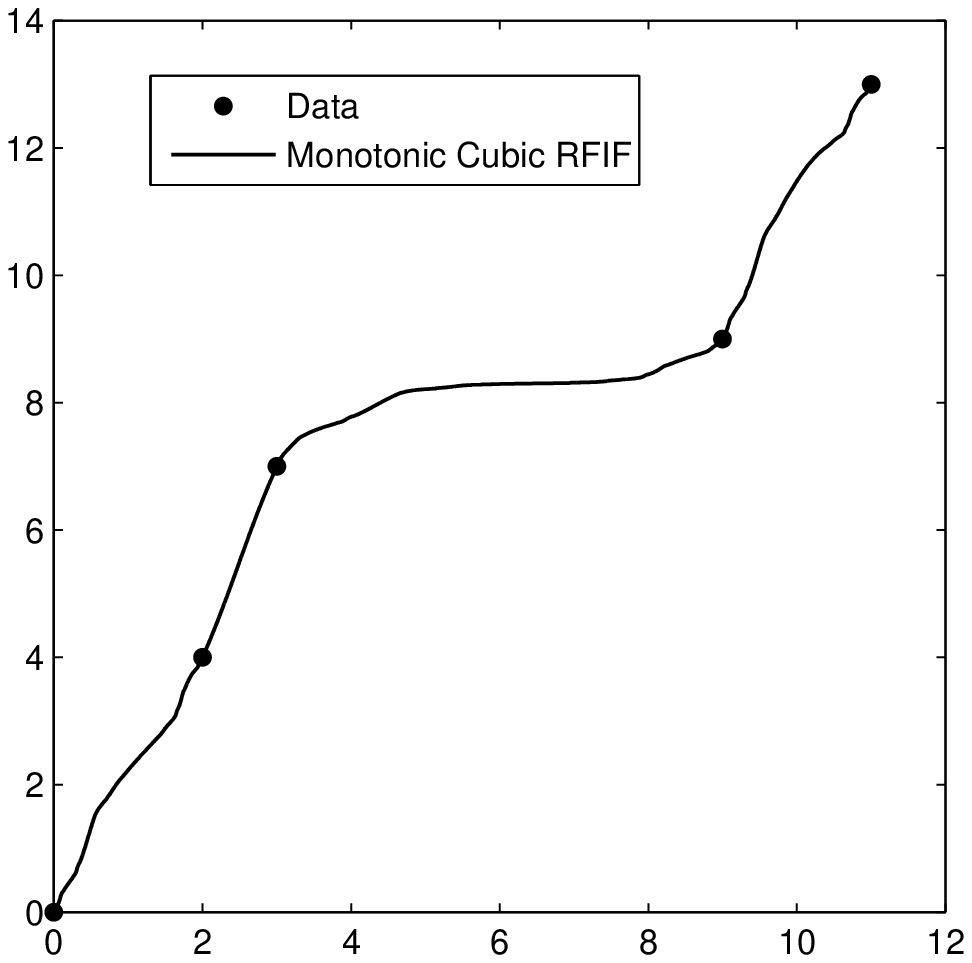,scale=0.38}\\ \centering{\scriptsize{(c):
Monotonic rational cubic FIF $S_3$ (effect of $\alpha_2$)}}
\end{minipage}\hfill
\begin{minipage}{0.3\textwidth}
\epsfig{file=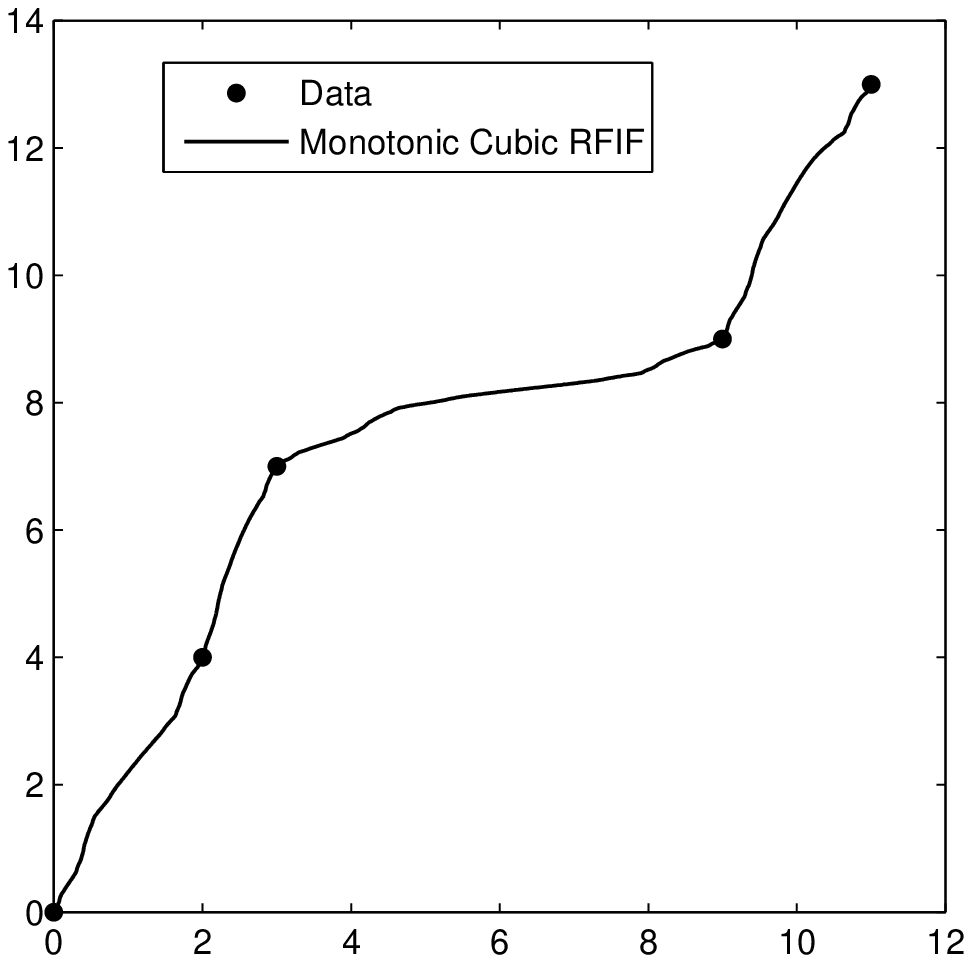,scale=0.38} \centering{\scriptsize{(d):
Monotonic rational cubic FIF $S_4$ (effect of $r_3$)}}
\end{minipage}\hfill
\begin{minipage}{0.3\textwidth}
\epsfig{file=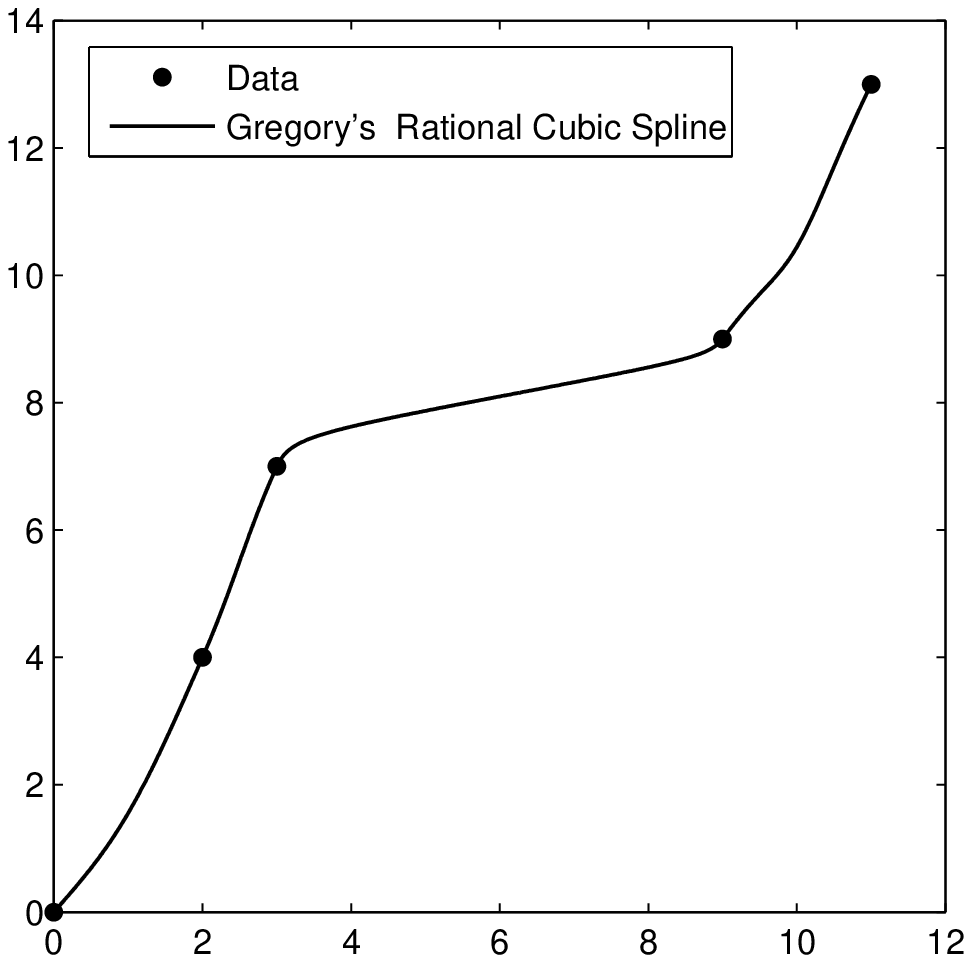,scale=0.38} \centering{\scriptsize{(e):
Classical Monotonic rational cubic spline $S_5$}}
\end{minipage}\hfill
\begin{minipage}{0.3\textwidth}
\epsfig{file=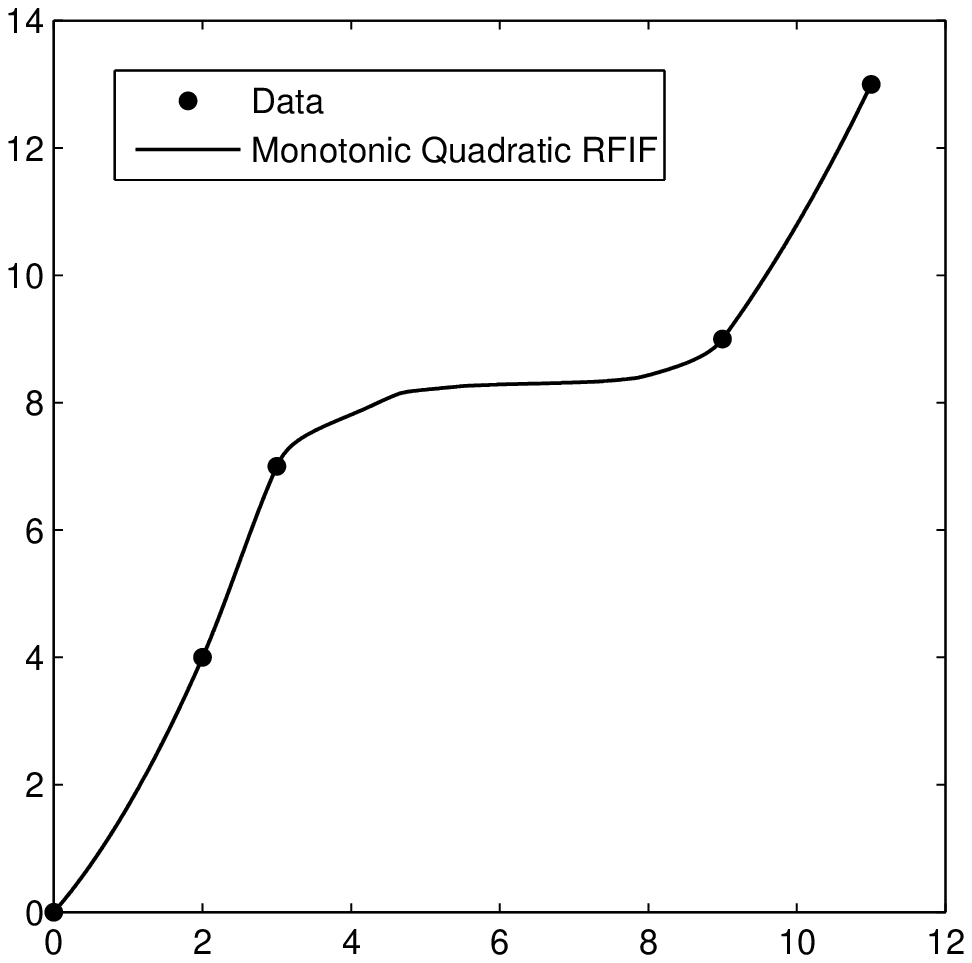,scale=0.38} \centering{\scriptsize{(f):
Monotonic rational quadratic FIF $S_6$}}
\end{minipage}\hfill\\
\caption{Monotonic rational cubic/quadratic spline FIFs: $S_i$,
$i=1,2,\dots,6$.}
\end{center}
\end{figure}
\begin{figure}[h!]
\begin{center}
\begin{minipage}{0.3\textwidth}
\epsfig{file=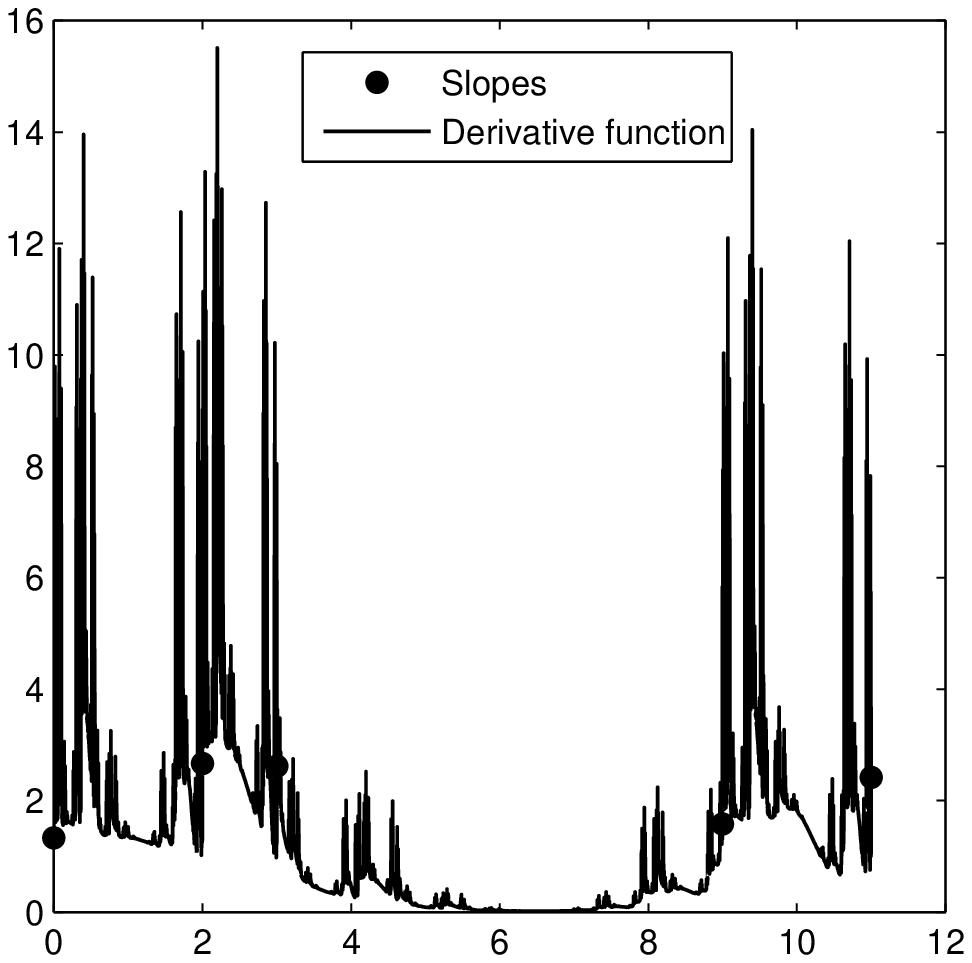,scale=0.38} \centering{\scriptsize{(a):
Fractal function $S_1^{(1)}$}}
\end{minipage}\hfill
\begin{minipage}{0.3\textwidth}
\epsfig{file=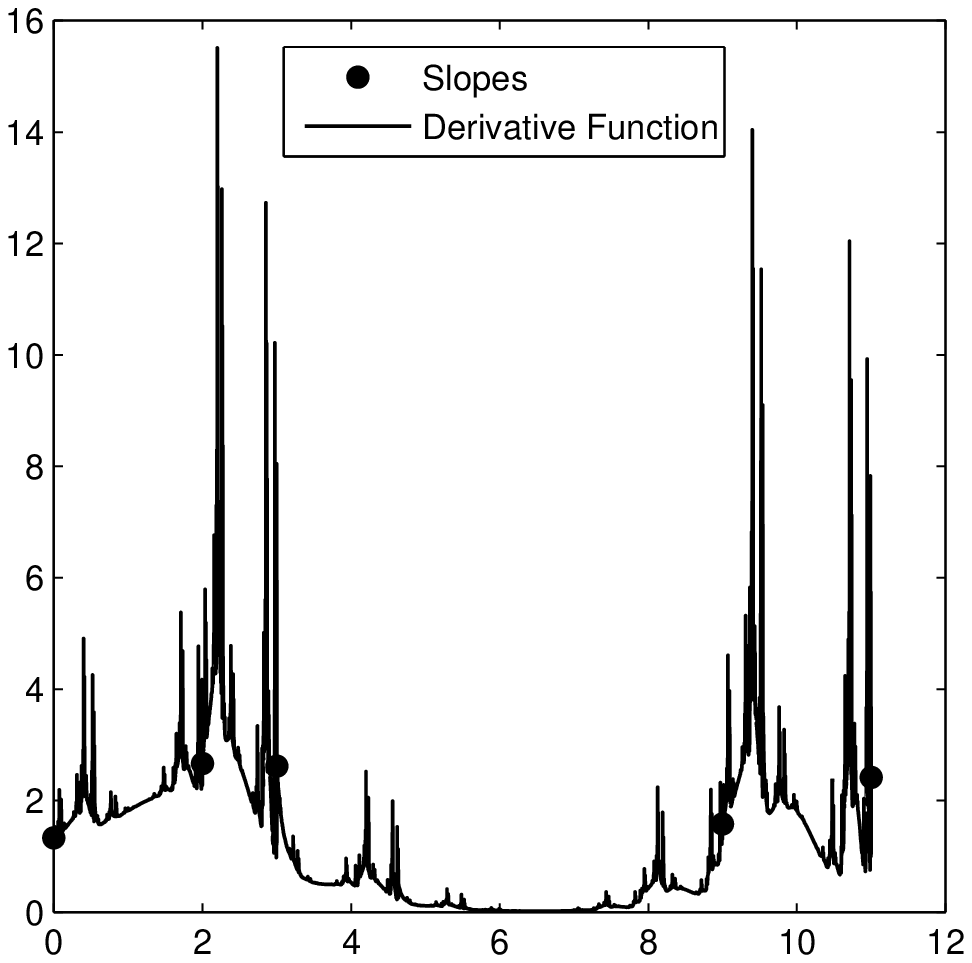,scale=0.38} \centering{\scriptsize{(b):
Fractal function $S_2^{(1)}$}}
\end{minipage}\hfill
\begin{minipage}{0.3\textwidth}
\epsfig{file = 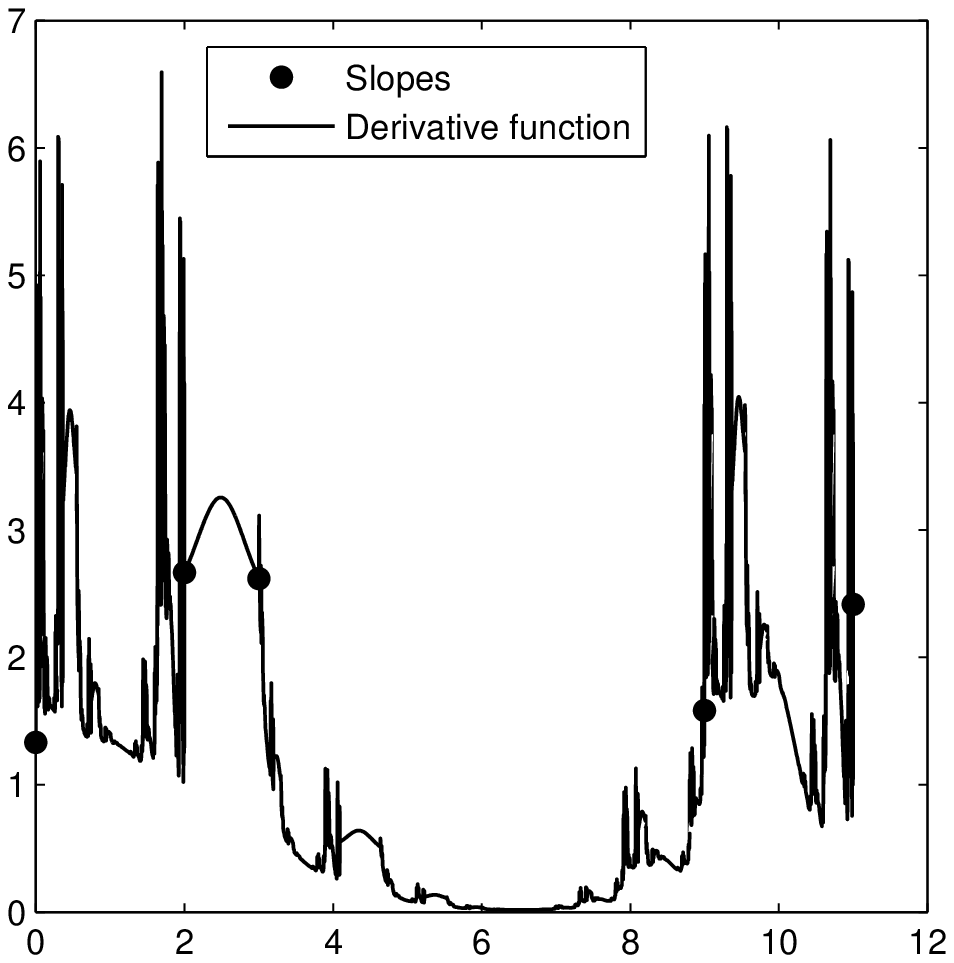,scale=0.38}\\ \centering{\scriptsize{(c):
Fractal function $S_3^{(1)}$ (smooth in $[2,3]$)}}
\end{minipage}\hfill
\begin{minipage}{0.3\textwidth}
\epsfig{file=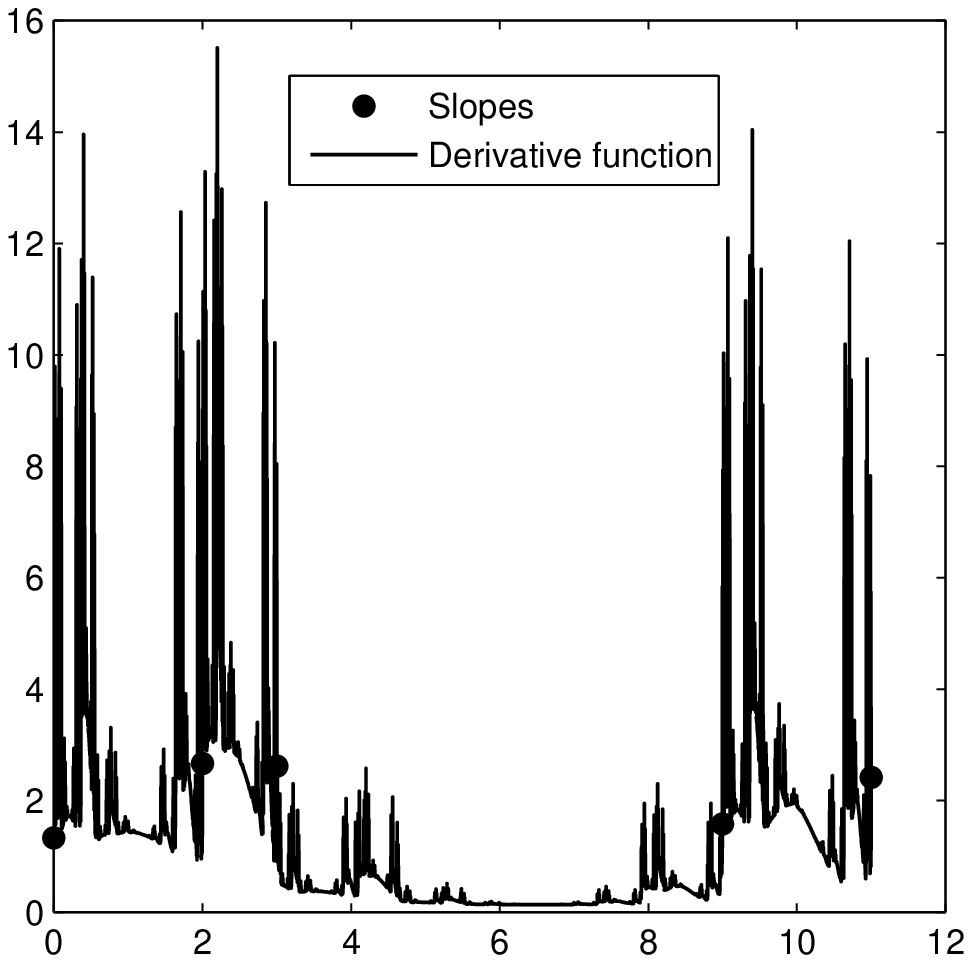,scale=0.38} \centering{\scriptsize{(d):
Fractal function $S_4^{(1)}$}}
\end{minipage}\hfill
\begin{minipage}{0.3\textwidth}
\epsfig{file=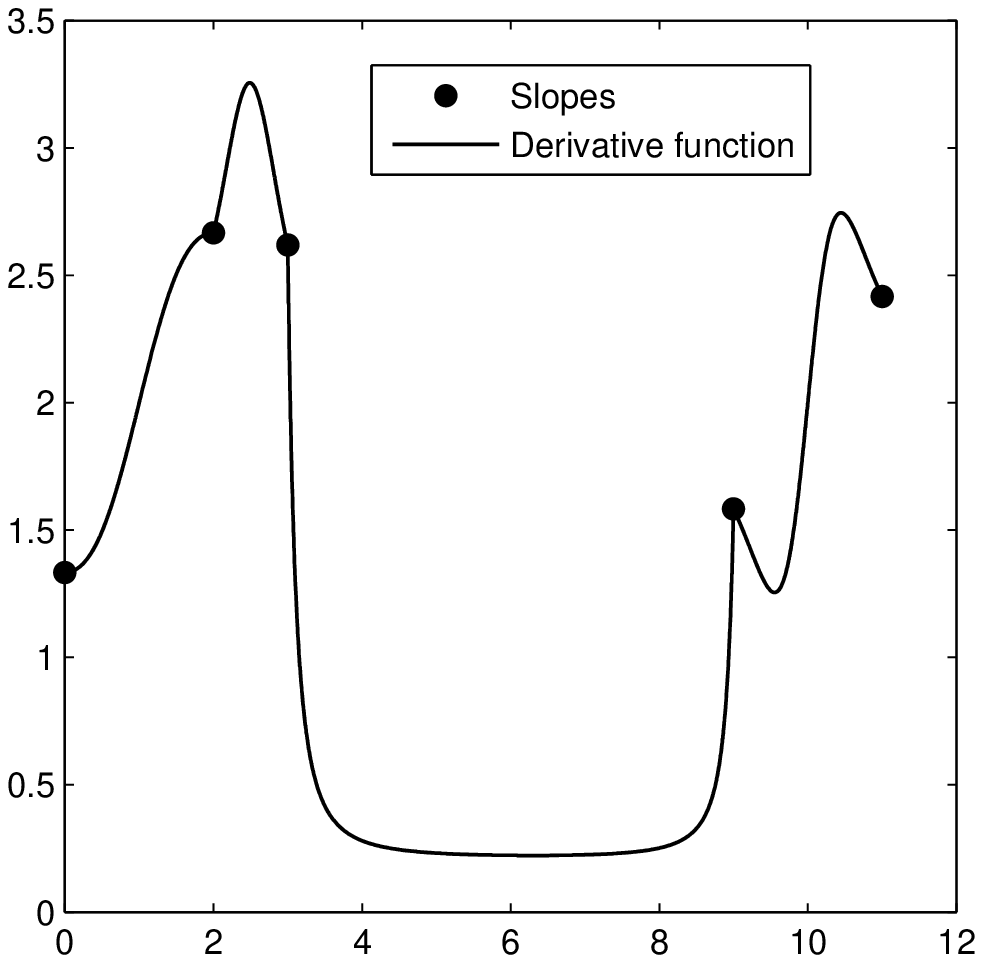,scale=0.38} \centering{\scriptsize{(e):
Function $S_5^{(1)}$}}
\end{minipage}\hfill
\begin{minipage}{0.3\textwidth}
\epsfig{file=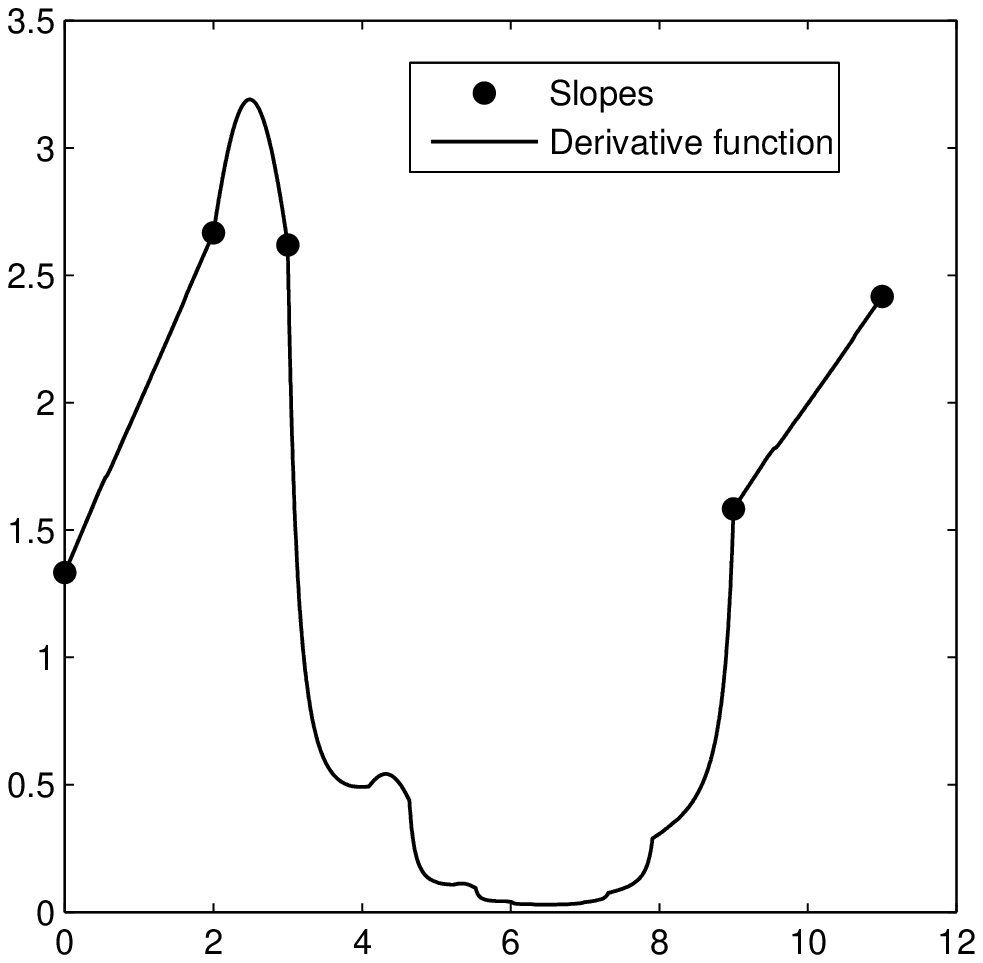,scale=0.38} \centering{\scriptsize{(f):
Function $S_6^{(1)}$}}
\end{minipage}\hfill\\
\caption{Derivatives of monotonic rational FIFs in Figs. 1(a)-1(f)
}
\end{center}
\end{figure}
\subsection{Convexity Preserving Rational FIFs}
Consider the convex data set  $\{(2.2,2), (4,0.625), (5,0.4), (10,
1), (10.22, 1.8)\}$. We computationally generate convex rational
cubic spline FIFs by using (\ref{r10}) and  the parameter values
given by Theorem \ref{C1}. The derivative parameters required for
the implementation of  the IFS scheme are estimated using the
arithmetic mean method. The convex rational cubic spline FIF
generated in Fig. 3(a) is taken as a reference curve.  Changing
the scaling factor $\alpha_3$ to $0.005$ and keeping the values of
the other parameters as in  Fig. 3(a) (see Table \ref{T2}), we
obtain the convex rational cubic spline FIF in Fig. 3(b). It can
be observed that the change in $\alpha_3$ influence the curve only
in $[x_3,x_4]$. Further, due to a small value of the scaling
factor and a large value of the shape parameter the FIF converges
to a line segment in $[x_3,x_4]$, demonstrating the tension
effect. Similar experiments may be conducted by changing the
scalings in other subintervals and the shape parameters. By taking
all the scaling factors to be zero and the shape parameters
according to (\ref{38}), a classical rational quadratic spline
that retain the data convexity is obtained in Fig. 3(c). Thus,
Fig. 3(c) provides a numerical example for the convex rational
quadratic spline by Delbourgo \cite{D2}.  As in the monotonicity
case, it can be observed by plotting the graph of the derivatives
that the scaling factors  provide fractality in the derivatives
$S^{(1)}$ or $S^{(2)}$ (more precisely right hand second
derivative).
\begin{table}[!h]
\caption{Parameters corresponding to the convex rational cubic
FIFs \label{T2}} \centering
\begin{tabular}{|l |c|  rrrr|}
\hline Figure No &  \multicolumn{5}{c|}{Choice of parameters}
\\ [1ex]
\hline
&$\alpha_i$ & $0.02$ & $0.001$ & $0.16$ & $0.007$ \\[-1ex]
\raisebox{1.5ex}{Fig. 3(a)} &$r_i$ & $37$& $8$ & $269$ & $8$
\\[0.5ex]
\hline
&$\alpha_i$ & $0.02$ & $0.001$ & $0.005$ &$0.007$ \\[-1ex]
\raisebox{1.5ex}{Fig. 3(b)} &$r_i$ & $37$& $8$& $269$ &$8$
\\[0.5ex]
\hline
&$\alpha_i$ & $0$ & $0$ & $0$ & $0$ \\[-1ex]
\raisebox{1.5ex}{Fig. 3(c)}&$r_i$ & $3$& $2.6459$ & $12.8069$ &
$3$\\[0.5ex]
 \hline
\end{tabular}
\label{tab:PPer}
\end{table}
\begin{figure}[h!]
\begin{center}
\begin{minipage}{0.3\textwidth}
\epsfig{file=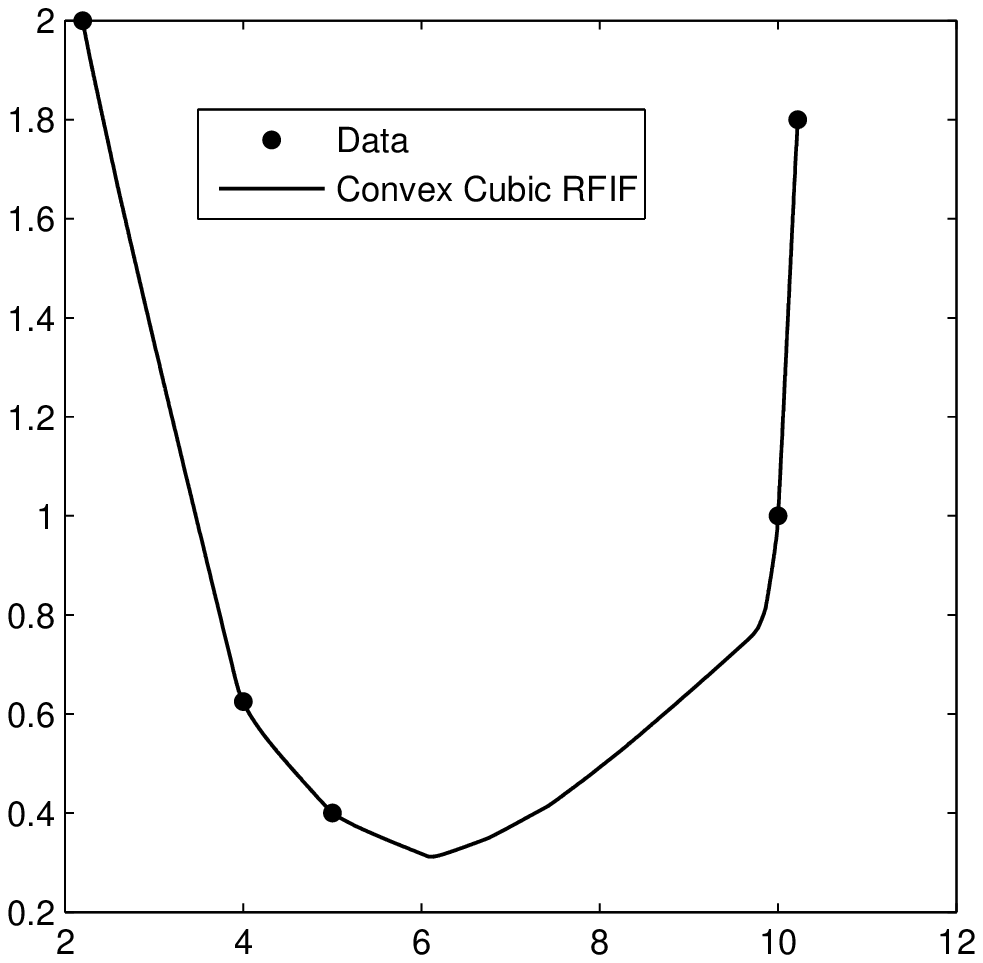,scale=0.38} \centering{\scriptsize{(a):
Convex rational cubic FIF} }
\end{minipage}\hfill
\begin{minipage}{0.3\textwidth}
\epsfig{file=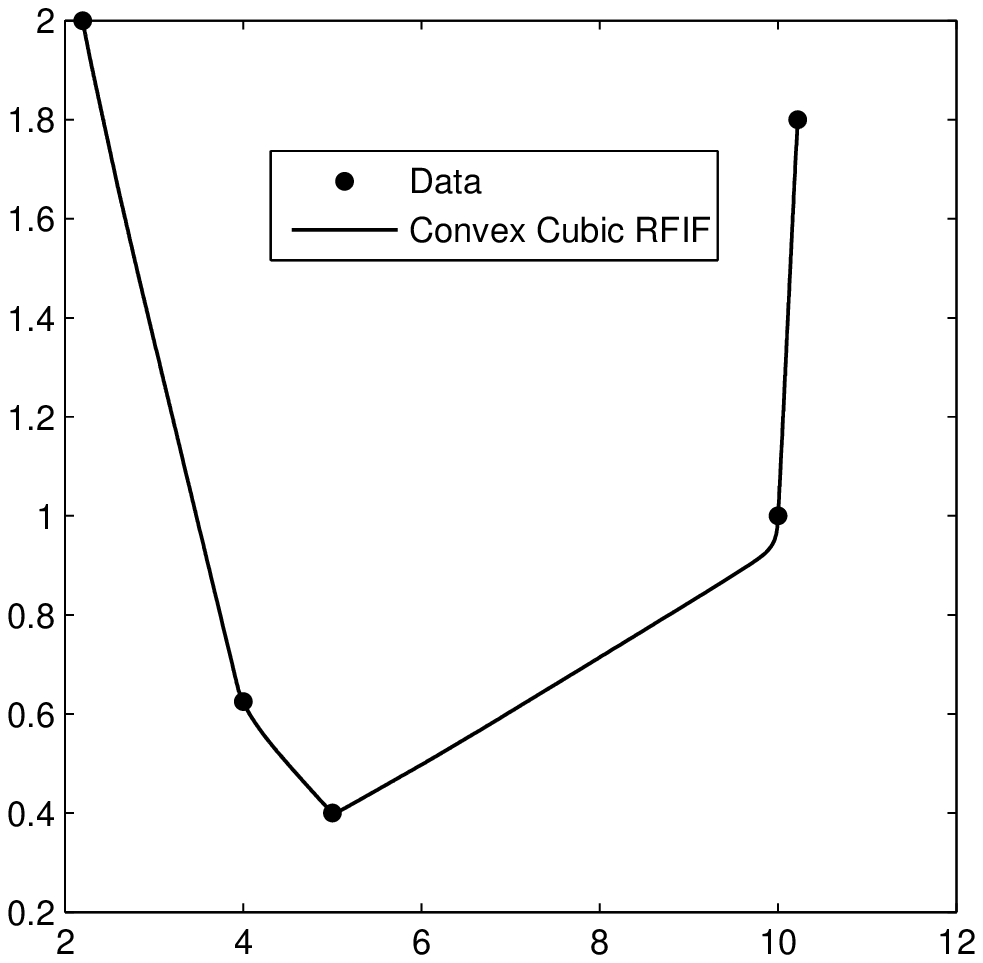,scale=0.38} \centering{\scriptsize{(b):
Convex rational cubic FIF  (effect of $\alpha_3$)}}
\end{minipage}\hfill
\begin{minipage}{0.3\textwidth}
\epsfig{file = 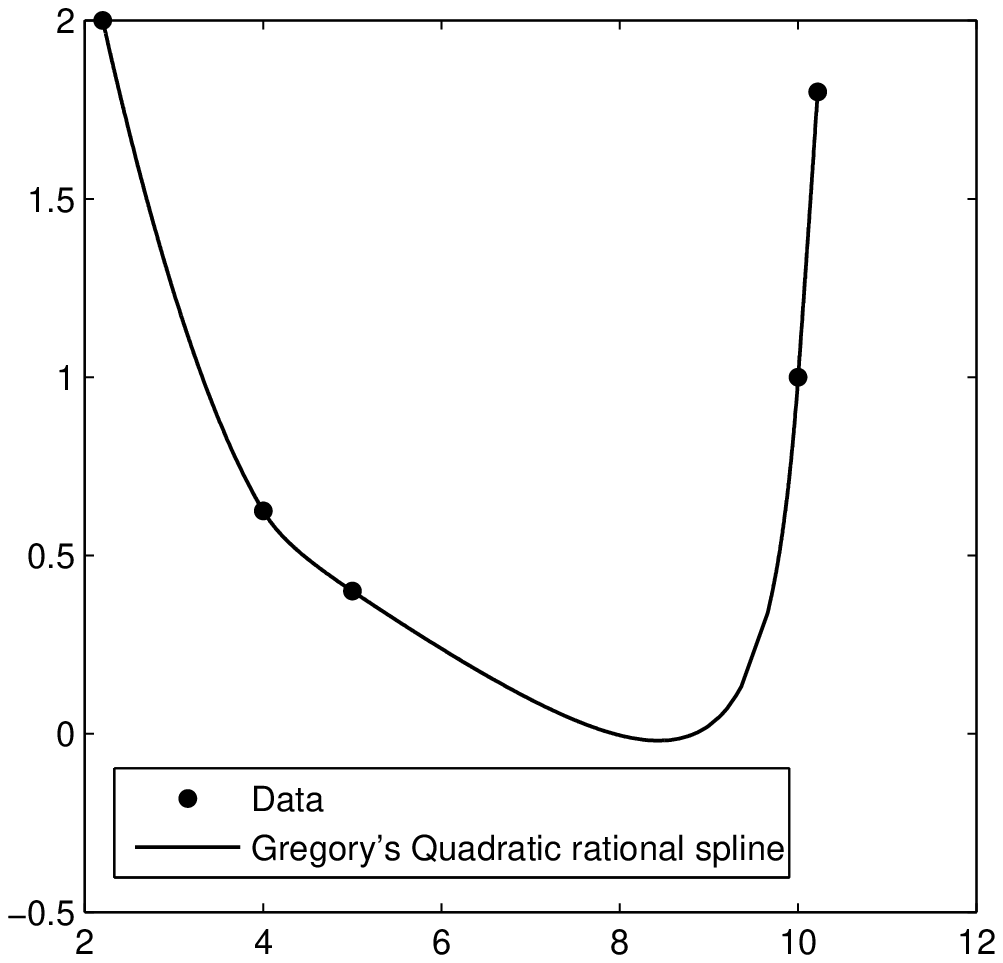,scale=0.38}\\ \centering{\scriptsize{(c):
Classical convex rational quadratic spline}}
\end{minipage}\hfill\\
\caption{Convex rational cubic/quadratic spline FIFs.}
\end{center}
\end{figure}
\subsection{FIFs with Mixed Shape Properties}
 The theoretical discussion we had in section \ref{r1ps6} was confined to
  data with same shape characteristics in the entire interpolation interval.
However, we can also apply  our schemes with proper modification
and mixing to obtain fractal interpolants for data with
mixed shape properties. We illustrate this with two examples.\\
The first example taken from \cite{TM} is a data set generated
from a function $\Phi_1$ defined on $[0,4]$ which is positive on
$[0, 1]$, strictly increasing on $[1, 2]$, constant on $[2, 3]$,
and concave on $[3, 4]$. Suppose that we want to use the proposed
fractal interpolation scheme to construct a $\mathcal{C}^1$
approximant to this function interpolating the data set
$\{(0,1),(0.5,0.2), (1,1), (1.5,1.3), (2,1.8), (2.5,1.8),\\
(3,1.8),(3.5,1.5), (4,0)\}$ with the same shape characteristics.
To achieve this, derivative values that are consistent with the
required shapes are estimated: $d_1=-3.2$, $d_2=d_5=d_6=d_7=0$,
$d_3=1.1$, $d_4=0.8$, $d_8=-1.8$, and $d_9=-4.2$. We divide the
interval into four subintervals of unit length such that in each
of the subinterval data possess the same shape property.  To
obtain a positive rational cubic spline FIF $S_1$ in $[0,1]$, we
iterate the functional equation (\ref{r10}) with the scaling
factors and the shape parameters satisfying the required
conditions \big(see Section \ref{r1ps5} (i)\big). Our specific
choices of the scaling parameters and the shape parameters are:
$\alpha_1=\alpha_2=0.15$; $r_1=1.5$, $r_2=0.5$. On the interval
$[1,2]$, we apply our monotonicity preserving algorithm with the
parameter values $\alpha_1=\alpha_2=0.3$, $r_1=8$, and $r_2=1$ to
obtain the monotonic rational cubic spline FIF $S_2$. Note that
here the parameters are indexed by considering the interpolation
to take place in the subinterval $[1,2]$, not the entire interval.
Following Remark (\ref{rcon}), we generate a linear interpolant
$S_3$ on $[2,3]$. Finally, the concavity preserving rational cubic
spline FIF $S_4$ is obtained on $[3,4]$ by iterating functional
equation (\ref{r10}) with parameters values satisfying concavity
condition, our specific choices being $\alpha_1=0.15$,
$\alpha_2=0.2$, $r_1=9$, and $r_4=4$. Since the data satisfy both
the monotonic decreasing condition and the strictly concave
condition on $[3,4]$, and derivative parameters are selected
accordingly, the concave interpolation scheme automatically render
a concave and monotonically decreasing interpolant (see Section
\ref{r1ps43}). The fractal function $S$  on $[1,4]$ \big(see Fig.
4(a)\big) is obtained by pasting $S_i$, $i=1,2,3,4$. Since $S_i$
and $S_i^{(1)}$ ($i=1,2,3,4$) are continuous, the continuity of
$S$ and $S^{(1)}$ follows from the pasting lemma. Consequently,
the fractal function $S\in
\mathcal{C}^1[0,4]$ given in Fig. 4(a)  provides an approximation to $\Phi_1$ satisfying the required shape properties.\\
Consider a function $\Phi_2$ defined on $[0,10]$, which is
monotonic decreasing on $[0,4]$, monotonic increasing on $[4,6]$,
and monotonic decreasing on $[6,10]$. Our second example is
concerned with the construction of a $\mathcal{C}^1$-approximation
which is co-monotone with this function, where all we know about
the function is the function values at specified points, say,
$\{(0,10),(1.5,5),(4,3.5),\\ (6,7.1), (8,3), (10,0)\}$. As
discussed in section \ref{r1ps5}, we divide the interval in to
three subintervals $I_1=[0,4]$, $I_2=[4,6]$, and $I_3=[6,10]$
where the data are  monotonic decreasing, monotonic increasing,
and monotonic decreasing respectively. Since the interval
$I_2=[4,6]$ contains only two knot points, the FIF scheme demands
insertion of a node in this interval. Let the new node be $(5,6)$.
Derivative values that are consistent with the required shapes are
chosen as $d_1=-4.35$, $d_2=-2.31$, $d_3=0$, $d_4=1.8$ (at the
inserted knot), $d_5=0$, $d_6=-1.77$, and $d_7=-1.2$. A rational
cubic spline FIF $S_1$ is constructed on $I_1$ by taking
$\alpha_1=\alpha_2=0.2$, $r_1=2$, $r_2=12$, and iterating the
functional equation (\ref{r10}). On $I_2$, the functional equation
(\ref{r10}) with the parameter values $\alpha_1=\alpha_2= 0.3$,
$r_1=2$, and $r_2=91$ generates $S_2$. Finally iterations of
(\ref{r10}) with $\alpha_1=\alpha_2=0.4$, $r_1=2$, and $r_2=26$
yield $S_3$. The fractal function $S \in \mathcal{C}^1$ defined in
a piecewise manner by $S\big|_{I_i}=S_i$, $i=1,2,3$ is co-monotone
with $\Phi_2$, and it is given in Fig. 4(b).
\begin{figure}[h!]
\begin{center}
\begin{minipage}{0.5\textwidth}
\epsfig{file=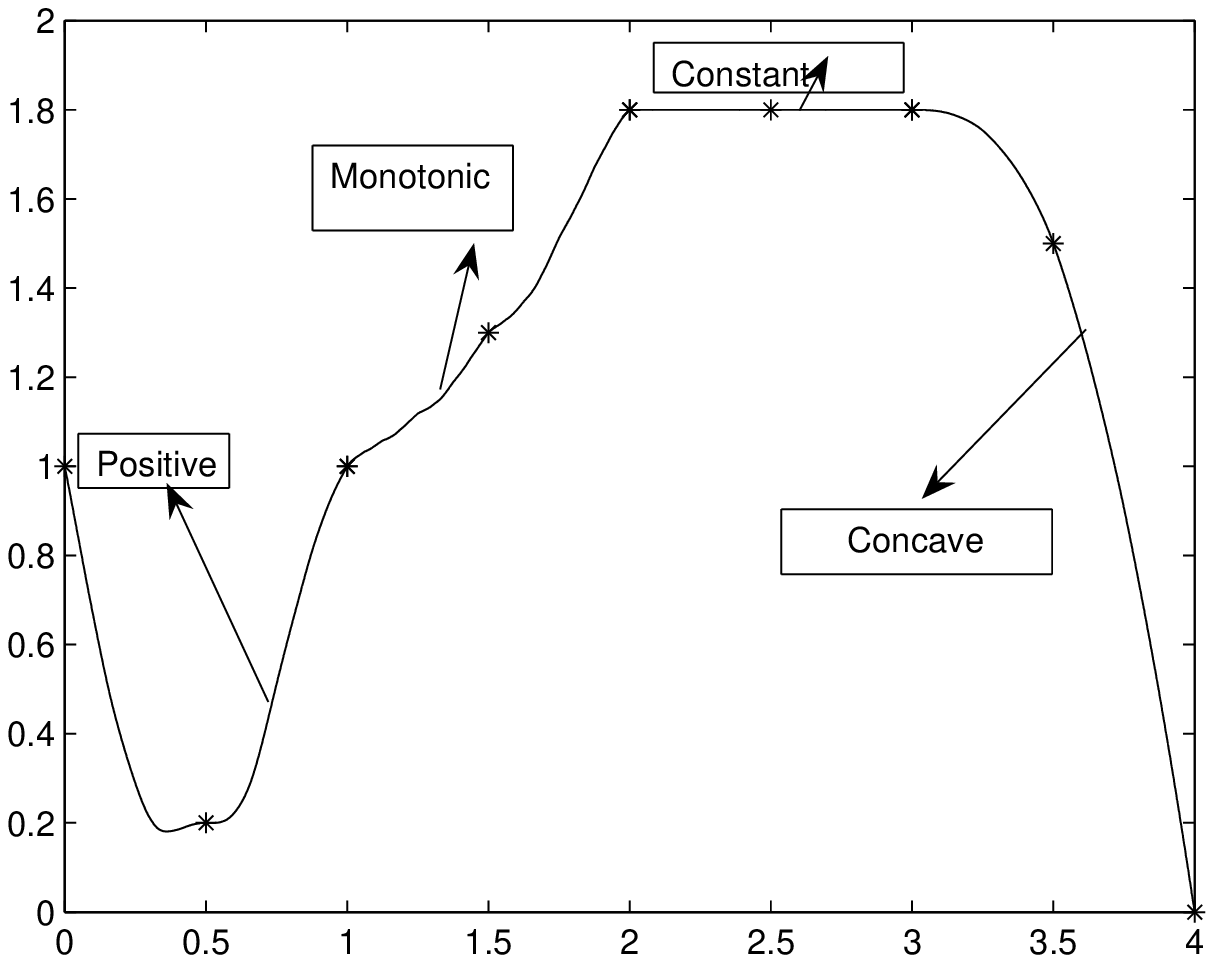,scale=0.5}\\ \centering{\scriptsize{(a):
Rational cubic spline FIF with mixed shape property}}
\end{minipage}\hfill
\begin{minipage}{0.5\textwidth}
\epsfig{file=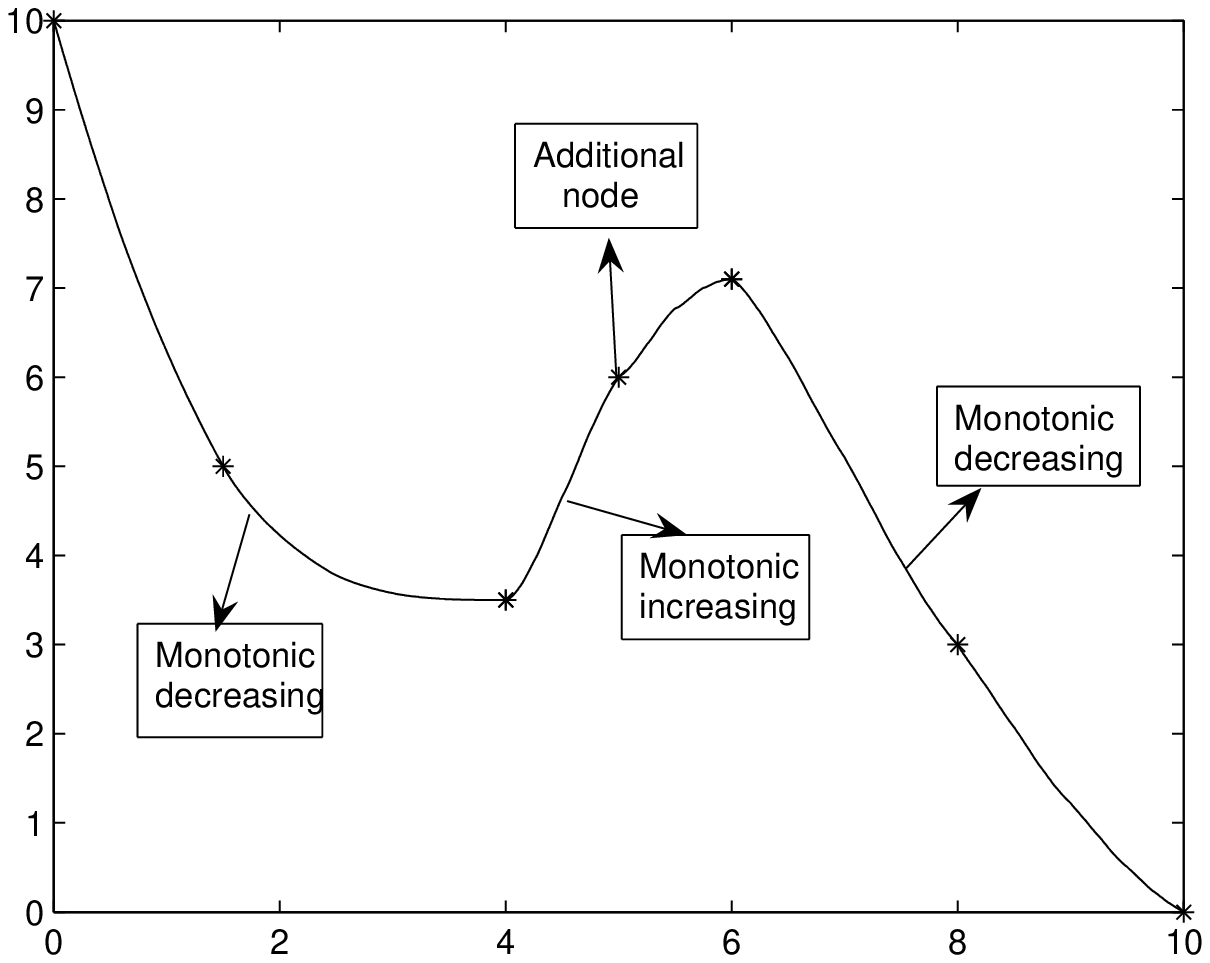,scale=0.5}\\ \centering{\scriptsize{(b):
Co-monotone rational cubic FIF}}
\end{minipage}\hfill
\caption{ Rational cubic spline FIFs with mixed shape properties.}
\end{center}
\end{figure}
\section{Conclusions}
A new kind of rational cubic fractal splines involving shape
parameters is proposed in the present work to provide a tool for
univariate shape preserving interpolation. Number of parameters in
the rational IFS is kept to be minimum (one family of parameters
for controlling fractality in the derivative function and one for
providing shape preserving characteristics) for computational
efficiency. Due to the presence of the scaling factors and the
shape parameters involved in the definition, the proposed
$\mathcal{C}^1$-rational cubic spine FIF  generalizes the
classical rational splines studied in the references
\cite{DG1,DG4,D2}. Despite the implicit and recursive nature of
the FIFs, it is shown that the existence of range restricted
fractal interpolants depends only on the solvability of a finite
set of inequalities resulting from the constraints. These
inequalities are shown to be solvable if the shape parameters are
above and the scaling factors are below certain explicitly
calculable bounds. Uniform convergence of the rational cubic
spline FIF to the original data generating function $f\in$
$\mathcal{C}^4 (I)$ is established. The convergence analysis shows
that $O(h^r) (r =1,2,3,4)$ error bounds can be achieved by
suitable choices of the derivatives, the scaling factors, and the
shape parameters. Thus, the present interpolation method has
convergence properties similar to that of its classical
counterpart, which should be considered along with the flexibility
and diversity offered by the new method. The scaling factors and
the shape parameters can be selected suitably to find an
interpolant satisfying chosen properties such as smoothness,
approximation order, locality, fractality in the derivative, and
shape preservation of the data.  The fairness (visual
pleasantness) of the interpolant can be achieved through a
constrained non-linear optimization. For the shape preserving
interpolants with varying irregularity in the derivatives, the
result is encouraging for the fractal spline class treated in this
paper. Consequently, it is felt that the proposed scheme can
provide an efficient mathematical tool for the simulation of
curves occurring in the study of physical systems, for instance,
in the study of nonlinear control problems such as pendulum-cart
system and  in some fluid dynamics problems such as
 motion of a falling sphere in a
non-Newtonian fluid.\\

 {\bf ACKNOWLEDGEMENTS.} The
first author is thankful to the SERC DST Project No. SR/S4/MS:
694/10 for this work. The second author is partially supported by
the Council of Scientific and Industrial Research India, Grant No.
09/084(0531)/2010-EMR-I.

\end{document}